\definecolor{refkeybis}{gray}{.65}
\definecolor{labelkeybis}{gray}{.65}
{\makeatletter
\def\SK@refcolor{\color{refkeybis}}%
\def\SK@labelcolor{\color{labelkeybis}}}
\numberwithin{equation}{section} 
\newtheorem{theorem}{Theorem}[section]
\newtheorem{lemma}[theorem]{Lemma}
\newtheorem{remark}[theorem]{Remark}
\newtheorem{proposition}[theorem]{Proposition}
\newtheorem{corollary}[theorem]{Corollary}
\newcommand{\R}{\mathbb{R}}
\newcommand{\K}{\mathcal{K}}
\newcommand{\LL}{\mathcal{L}}
\newcommand{\<}{\langle}
\renewcommand{\>}{\rangle}
\renewcommand{\a}{\alpha}
\renewcommand{\b}{\beta}
\renewcommand{\d}{\delta}
\newcommand{\D}{\Delta}
\newcommand{\e}{\varepsilon}
\newcommand{\g}{\gamma}
\newcommand{\G}{\Gamma}
\renewcommand{\k}{\kappa}
\renewcommand{\l}{\lambda}
\renewcommand{\L}{\Lambda}
\newcommand{\n}{\nabla}
\newcommand{\var}{\varphi}
\renewcommand{\S}{\mathbb{S}}
\newcommand{\p}{\partial}
\renewcommand\div{\operatorname{div}}
\newcommand{\supp}{\operatorname{supp}}
 \newcommand{\tauV}{{\kern-3pt\tau}}
 \newcommand{\oVVVk}{\overline{\mbox{\boldmath$V$}}\kern-3pt}
 \newcommand{\tVVVk}{\tilde{\mbox{\boldmath$V$}}\kern-3pt}
 \newcommand{\Df}{(-\Delta)^{s}}
 \newcommand{\Lip}{{\rm Lip}}
 \newcommand{\logLip}{{\rm logLip}}
\title{Regularity of solutions\\ to the parabolic fractional obstacle problem}
\date{}
\author{Luis Caffarelli\thanks{Department of Mathematics,
The University of Texas at Austin, 1 University Station C1200, Austin TX 78712, USA.
E-mail: \texttt{caffarel@math.utexas.edu}} \,and Alessio Figalli\thanks{Department of Mathematics,
The University of Texas at Austin, 1 University Station C1200, Austin TX 78712, USA.
E-mail: \texttt{figalli@math.utexas.edu}}}
\begin{document}

\maketitle

\section{Introduction}

In recent years, there has been an increasing interest in studying constrained variational problems with a fractional
diffusion. 
One of the motivations comes from mathematical finance:
jump-diffusion processes where incorporated by Merton \cite{merton} into the theory of option evaluation
to introduce discontinuous paths in the dynamics of the stock's prices, in contrast with the classical lognormal diffusion model of Black and Scholes \cite{blsc}.
These models allow to take into account large price changes, and they have become increasingly popular for modeling market fluctuations,
both for risk management and option pricing purposes.

Let us recall that
an American option gives its holder the right to buy a stock at a given price prior (but not later) than a given time $T>0$.
If $v(\tau,x)$ represents the rational price of an American option with a payoff $\psi$ at time $T>0$, then $v$
will solve (in the viscosity sense) the following obstacle problem:
$$
\left\{
\begin{array}{l}
\min\{ \LL v,v-\psi\}=0,\\
v(T)=\psi.
\end{array}
\right.
$$
Here $\LL v$ is a (backward) parabolic integro-differential operator of the form
\begin{multline*}
\LL v= -v_\tau  - r v + \sum_{i=1}^n (r-d_i) x_i v_{x_i} - \frac{1}{2}\sum_{i,j=1}^n x_i x_j \sigma_{ij}  v_{x_i x_j}\\
 - \int \Bigl[v\bigl(\tau,x_1e^{y_1},\ldots,x_ne^{y_n}\bigr) - v(\tau,x) - \sum_{i=1}^n(e^{y_i} - 1) x_i v_{x_i}(\tau,x)\Bigr]\,\mu(dy),
\end{multline*}
where $r>0$, $d_i \in \R$, $\sigma=(\sigma_{ij})$ is a non-negative definite matrix,
and $\mu$ is a jump measure.
(We refer to the book \cite{conttankov} for an explanation of these models and more references.)
When the matrix $\sigma$ is uniformly elliptic, after the change of variable
$x_i \mapsto \log (x_i)$
the equation becomes uniformly parabolic (backward in time) and the diffusion part dominates. In particular, if no jump part is present (i.e., $\mu\equiv 0$),
then the regularity theory is pretty well-understood (see, for instance, \cite{lawsal}).

Here we assume that there is no diffusion (i.e., $\sigma\equiv 0$), so all the regularity should come from the jump part.
We also assume that the jump part behaves, at least at the leading order, as a fractional power of the Laplacian, so that the equation takes the form
\begin{equation}
\label{eq:tilde LL}
\LL v= -v_\tau  - r v - b\cdot \nabla u + \Df v + \K v,\qquad s\in (0,1),
\end{equation}
where $b=(d_1-r,\ldots,d_n-r)$, and $\K v$ is a non-local operator of lower order with respect to $\Df v$.

We now observe that the choice of $s \in (0,1)$ plays a key role:
\begin{enumerate}
 \item[-] $s>1/2$: In this case $\Df v$ is the leading term, so the regularity theory for solutions to \eqref{eq:tilde LL}
is expected to be the same one as that for the equation
\begin{equation}
\label{eq:frac heat visc back}
\left\{
\begin{array}{l}
\min\{- v_\tau + \Df v,v-\psi\}=0\quad \text{on }[0,T]\times \R^n,\\
v(T)=\psi\quad \text{on }\R^n.
\end{array}
\right.
\end{equation}
\item[-] $s\leq 1/2$: If $s<1/2$ then the leading term becomes $b\cdot \nabla v$, and we do not expect to have a
regularity theory for \eqref{eq:tilde LL}. On the other hand, in the borderline case $s=1/2$ one may expect some regularity due to the interplay between
$b\cdot \nabla v$ and $ - \Df v$ (but this becomes
a very delicate issue). However, when $b\equiv 0$, even if the diffusion term is of lower order with respect to the time derivative,
the equation is still parabolic and one may hope to prove some regularity for all values of $s$.
\end{enumerate}
The goal of this paper is to investigate the regularity theory for the model equation \eqref{eq:frac heat visc back}.
The reason for this is three-fold: first of all, considering this model case allows to avoid technicalities which may obscure the main ideas
behind the regularity theory that we will develop.
Moreover, since there is no transport term inside the equation, we are able to prove that solutions are as smooth as in the elliptic case
\cite{cafsalsil} \textit{for all} values of $s \in (0,1)$. Hence,
although when $s <1/2$ the time derivative is of higher order with respect to the elliptic part $\Df v$,
the regularity of solutions is as good as in the stationary case.
Finally, as described in Section \ref{sect:general eq}, once the general regularity theory for solutions of \eqref{eq:frac heat visc back} is established,
the adaptation of these proofs to the more general case \eqref{eq:tilde LL}
when $s>1/2$ should not present any major difficulty.\\

Let us remark that the fact that the smoothness of solutions of \eqref{eq:frac heat visc back} is the same as in the elliptic case
may look surprising. Indeed, the optimal regularity for the stationary problem
$\min\{ \Df v,v-\psi\}=0$ is $C_{x}^{1+s}(\R^n)$ \cite{athcaf,silv,cafsalsil}. On the other hand,
as we will show in Remark \ref{rmk:C1},
for any $\b\in(0,1)$ one can find a traveling wave solution to the parabolic obstacle problem $\min\{- v_\tau + (-\Delta)^{1/2} v,v-\psi\}=0$
which is $C^{1+\b}$ both in space and time, but not $C^{1+\g}$ for any $\g>\b$.
Hence, in order to prove that solutions to \eqref{eq:frac heat visc back} are $C^{1+s}$ in space, one has to exploit the crucial fact that
$v$ coincides with the obstacle at time $T$.

\section{Description of the results and structure of the paper}
In this section we introduce more in detail the problem, and describe our main result.

Let us observe that, by performing the change of variable $t=T-\tau$, all equations introduced in the previous section become forward in time.
From now on, we will always work with $t$ in place of $\tau$, so the payoff $\psi$ becomes the initial condition at time $0$.

\subsection{Preliminary definitions}
\label{subsect:prelim}
The fractional Laplacian can be defined as
$$
\Df f := \widehat{|\xi|^{2s}\hat f}\qquad \forall\,f \in C^\infty_c(\R^n),
$$
so that
$\int f \Df g =\< f,g\>_{\dot H^s}$. There are also two other different ways to define $\Df$.
The first one is through an integral kernel: there exists a positive constant $C_{n,s}$ such that
$$
-\Df f(x)=C_{n,s}\int \frac{f(x')-f(x)}{|x'-x|^{n+2s}}\,dx' \qquad \forall\, f \in C^\infty_c(\R^n),
$$
where the integral has to be intended in the principal value sense. (This can be proved, for instance, by computing the Fourier transform of $|\xi|^{2s}$.)
The second one is through a Dirichlet-to-Neumann operator, as shown in \cite{cafsil}: 
given $a \in (-1,1)$, for any function $f\in C^\infty_c(\R^n)$ denote by $F:\R^n\times \R^+ \to \R$ the $L_a$-harmonic extension of $f$,
i.e.,
$$
\left\{
\begin{array}{ll}
L_a F(x,y):=\div_{x,y}\bigl(y^a \nabla_{x,y} F(x,y)\bigr)=0&\text{on $\R^n\times\R^+$},\\
F(x,0)=f(x)&\text{on $\R^n$}.
\end{array}
\right.
$$
Then there exists a positive constant $c_{n,s}$ such that
$$
\lim_{y\to 0^+}y^a F_y(x,y)=-c_{n,s}\Df f(x,0),\qquad s:=\frac{1-a}{2} \in (0,1).
$$
In the sequel, we will make use of all of the three above characterizations of the fractional Laplacian.
However, in order to simplify the notation, we will conventionally assume that $C_{n,s}=c_{n,s}=1$, so that
\begin{equation}
\label{eq:equivalent frac lapl}
-\Df f=\int \frac{f(x')-f(x)}{|x'-x|^{n+2s}}\,dx'=\lim_{y\to 0^+}y^a F_y(x,y).
\end{equation}

We will also need the notion of semiconvex function:
a function $w:\R^n\to\R$ is said to be $C$-semiconvex for some constant $C \in \R$ if $w+C|x|^2/2$ is convex.\\

Finally, to measure the regularity of the solutions we will use space-time H\"older, Lipschitz, and logLipschitz spaces:
given $\a,\b,\g,\d \in (0,1)$, and $[a,b] \subset \R$, we say that:\\
$w \in C_{t,x}^{\a,\b} ([a,b]  \times \R^n)$ if
\begin{align*}
\|w\|_{C_{t,x}^{\a,\b}([a,b]  \times \R^n)}&:=\|w\|_{L^\infty({[a,b]  \times \R^n})}+[w]_{C_{t,x}^{\a,\b}([a,b]  \times \R^n)}\\
&= \|w\|_{L^\infty({[a,b]  \times \R^n})}+\sup_{[a,b]  \times \R^n} \frac{|w(t,x)-w(t',x')|}{|t-t'|^\a +|x-x'|^\b} <+\infty;
\end{align*}
$w \in \Lip_tC_{x}^\b ([a,b]  \times \R^n)$ if
$$
\|w\|_{\Lip_tC_{x}^\b([a,b]  \times \R^n)}:=\|w\|_{L^\infty({[a,b]  \times \R^n})}+ \sup_{[a,b]  \times \R^n} \frac{|w(t,x)-w(t',x')|}{|t-t'| +|x-x'|^\b} <+\infty;
$$
$w \in \logLip_tC_x^\b ([a,b]  \times \R^n)$ if
$$
\|w\|_{\logLip_tC_{x}^\b([a,b]  \times \R^n)}:=\|w\|_{L^\infty({[a,b]  \times \R^n})}+ \sup_{[a,b]  \times \R^n} \frac{|w(t,x)-w(t',x')|}{|t-t'|\bigl(1+\bigl|\log|t-t'|\bigr|\bigr) +|x-x'|^\b} <+\infty.
$$
We will also use the notation $w \in C_{t,x}^{\a-0^+,\b} ([a,b] \times \R^n)$ if
$$
w \in C_{t,x}^{\a-\e,\b} ([a,b]  \times \R^n) \qquad \forall\,\e>0,
$$
and $w \in C_{t,x}^{\a,\b} ((a,b]  \times \R^n)$ if
$$
w \in C_{t,x}^{\a,\b} ([a+\e,b]  \times \R^n) \qquad \forall\,\e>0
$$
(analogous definitions hold for the other spaces).

\subsection{The main result}
Let $\psi:\R^n \to \R^+$ be a globally Lipschitz function of class $C^2$ satisfying $\int_{\R^n}\frac{|\psi|}{(1+|x|)^{n+2s}}<+\infty$
and $\Df \psi \in L^\infty(\R^n)$.
Fix $s \in (0,1)$, and let  $u:[0,T]\times \R^n \to \R$ be a (continuous) viscosity solution to the obstacle problem
\begin{equation}
\label{eq:frac heat}
\left\{
\begin{array}{l}
\min\{ u_t + \Df u,u-\psi\}=0 \quad \text{on }[0,T]\times \R^n,\\
u(0)=\psi\quad \text{on } \R^n.
\end{array}
\right.
\end{equation}
Existence and uniqueness of such a solution
follows by standard results on obstacle problems\footnote{
Here, existence of solutions is not the main issue: for instance,
one can construct solutions by using probabilistic formulas involving stochastic processes
and stopping times \cite{conttankov}.
Another possibility is to approximate the equation
using a penalization method (as done in the proof of Lemma \ref{lemma:comparison})
and then use the a priori bounds on the approximate solutions (see the proofs of Lemmas \ref{lemma:basic} and \ref{lemma:Linfty bound frac heat}) to show existence by compactness.
The fact that these two notions of solutions (the probabilistic one and the one constructed by approximation) coincide, follows from standard
comparisons principle for viscosity solutions.}.
The main goal of this paper is to investigate the smoothness
of solutions to the above equations, planning 
to address in a future work the regularity of the free boundary.

Our main result is the following:
\begin{theorem}
\label{thm:main}
Assume that $\psi \in C^2(\R^n)$, with
$$
\|\nabla \psi\|_{L^\infty(\R^n)}+\|D^2 \psi\|_{L^\infty(\R^n)}+\|\Df \psi\|_{C_x^{1-s}(\R^n)} <+\infty,
$$
and let $u$ be the unique continuous viscosity solution of \eqref{eq:frac heat}.
Then $u$ is globally Lipschitz in space-time on $[0,T]\times \R^n$, and satisfies
\begin{equation}
\label{eq:opt reg}
\left\{
\begin{array}{lll}
u_t \in \logLip_tC_x^{1-s}((0,T]\times\R^n),&\Df u \in\logLip_tC_x^{1-s}((0,T]\times \R^n)&\text{if }s\leq 1/3;\\
u_t \in C_{t,x}^{\frac{1-s}{2s}-0^+,1-s}((0,T]\times \R^n),&\Df u \in C_{t,x}^{\frac{1-s}{2s},1-s}((0,T]\times \R^n)&\text{if }s>1/3.\\
\end{array}
\right.
\end{equation}
\end{theorem}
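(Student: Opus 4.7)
My plan is to proceed via penalization and uniform a priori estimates. Consider approximate solutions $u^\eps$ of the penalized equation
$u^\eps_t + \Df u^\eps + \beta_\eps(u^\eps - \psi) = 0$, $u^\eps(0,\cdot) = \psi$,
where $\beta_\eps$ is a smooth monotone approximation of the obstacle reaction (vanishing on $\{u \geq \psi\}$, diverging below). By standard maximum-principle arguments applied to the equation and its derivatives one obtains, uniformly in $\eps$: $\|u^\eps\|_\infty, \|\nabla_x u^\eps\|_\infty \leq C(\psi)$; monotonicity $u^\eps_t \geq 0$ (because $\psi$ is a subsolution, so $u^\eps(t+h,\cdot) \geq u^\eps(t,\cdot)$ by comparison); $\|u^\eps_t\|_\infty \leq \|\Df \psi\|_\infty$ (differentiate in $t$ and use the sign of $\beta_\eps'$); and spatial semiconvexity $D^2_x u^\eps \geq -\|D^2\psi\|_\infty\, I$ (comparing $u^\eps$ with translates plus a quadratic, exploiting convexity of $\beta_\eps$). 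Passing to the limit gives the global Lipschitz in space-time part of the theorem together with the structural properties $u_t \geq 0$ and spatial semiconvexity of $u$, which are the engine of everything that follows.

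The second step is to notice that $w := u_t$ itself solves, in the interior $(0,T)\times\R^n$, the obstacle problem $\min\{w_t + \Df w,\, w\}=0$: indeed on $\{u > \psi\}$ one has $u_t + \Df u = 0$ and so $w_t + \Df w = 0$, while on the contact set $\{u = \psi\}$ one has $w = 0$ since $u_t \geq 0$ and $u$ touches $\psi$ from above. From the equation in the non-contact set and the $L^\infty$ bound on $u_t$, one also gets $\|\Df u\|_\infty \leq \|\Df \psi\|_\infty$.

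The core step is the spatial $C^{1-s}$ regularity of $u_t$ and $\Df u$. I would treat this \emph{fiberwise in time}, adapting the elliptic theory of Caffarelli-Salsa-Silvestre \cite{cafsalsil}. At each fixed $t > 0$, $(u(t,\cdot), \Df u(t,\cdot))$ satisfies an obstacle-type structure with the role of the Lagrange multiplier played by $u_t$; semiconvexity from below in $x$ together with the bound on $u_t$ should be enough to run the Caffarelli-Silvestre extension argument and upgrade $\Df u(t,\cdot)$ from $L^\infty_x$ to $C^{1-s}_x$ (equivalently $u(t,\cdot) \in C^{1+s}_x$), with constants uniform in $t$. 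The time derivative $u_t$ enters the $L_a$-harmonic extension as a controlled bulk forcing, and the monotonicity $u_t \geq 0$ is what keeps the elliptic barrier constructions consistent.

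Once the spatial $C^{1-s}$ regularity is in hand, the time regularity follows by the natural parabolic scaling $x \leftrightarrow t^{1/(2s)}$ applied to the equation $u_t + \Df u = 0$ in the non-contact set. Spatial H\"older exponent $1-s$ translates into time H\"older exponent $(1-s)/(2s)$: when $s > 1/3$ this is $<1$ and one gets the direct H\"older estimate (with an arbitrarily small loss, reflected by the $-0^+$ in the statement), while for $s \leq 1/3$ the formal exponent would exceed $1$, so since only semiconvexity (not $C^{1,1}$) in time is available one degrades to the $\logLip_t$ bound. The main obstacle I expect is the adaptation of \cite{cafsalsil} to this parabolic setting: the elliptic argument is delicate, and here it has to be carried out with the additional presence of $u_t$. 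The monotonicity $u_t \geq 0$, which comes from $u(0,\cdot) = \psi$, is essential: Remark \ref{rmk:C1} shows that without it the optimal $C^{1+s}_x$ regularity fails already for traveling-wave solutions.
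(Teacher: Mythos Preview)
Your overall architecture matches the paper's: basic estimates via penalization, then fiberwise-in-$t$ spatial regularity via the extension, then time regularity by parabolic arguments. But two of your steps contain genuine gaps.

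\textbf{Spatial $C^{1-s}_x$ regularity.} You write that \cite{cafsalsil} ``should be enough,'' but the situation here is not the one treated there. At fixed $t$, setting $v = u(t)$, you do \emph{not} have $\Df v = 0$ off the contact set; you only have $\Df v = -u_t(t) \leq 0$ there (this is precisely where $u_t \geq 0$ enters). The Almgren-type frequency formula in \cite{cafsalsil} relies on $\Df v$ vanishing off the contact set and does not survive this change. The paper's route is: (i) an iteration on the $L_a$-extension (Proposition~\ref{prop: C1a v}), using only the \emph{sign} conditions $\Df v \leq 0$ on $\{v>\psi\}$, $\Df v \geq 0$ on $\{v=\psi\}$, together with semiconvexity, to obtain a preliminary $C^\alpha$ decay of $\Df v\,\chi_{\{v=\psi\}}$ at free boundary points; then (ii) a new monotonicity formula (Lemma~\ref{lemma:monotonicity}), applied not to $v$ but to the $L_{-a}$-harmonic extension of $\Df v\,\chi_{\{v=\psi\}}$, to upgrade $\alpha$ to $1-s$. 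Step (ii) is absent from your proposal, and step (i) is not the argument of \cite{cafsalsil}.

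\textbf{Time regularity.} This does not follow from ``parabolic scaling applied to $u_t + \Df u = 0$ in the non-contact set''; that equation gives no control across the moving free boundary. The paper instead writes the \emph{global} equation $u_t + \Df u = f$ with $f := (\Df u)\chi_{\{u=\psi\}}$. Parabolic regularity with $f \in L^\infty_t C^{1-s}_x$ (Appendix, \eqref{eq:regularity holder 1}--\eqref{eq:regularity holder 2}) yields time regularity for $\Df u$ but only $C^{1-s-0^+}_x$ in space, and says nothing about time regularity of $u_t$ beyond $L^\infty$. To place $u_t$ in a time-H\"older class one must show that $f$ itself is time-H\"older. This is the content of Lemma~\ref{lemma:bootstrap}: using that contact sets shrink in time, that $\Df u$ vanishes on the free boundary, and that $u_t$ vanishes on the contact set, a mollification argument gives $f \in C^{\Phi(\alpha),1-s}_{t,x}$ whenever $u_t \in C^{\alpha,1-s}_{t,x}$, with $\Phi(\alpha) = (1+\alpha)\frac{1-s}{1+s}$ for $s>1/3$. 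Iterating $\Phi$ drives $\alpha$ to its fixed point $\frac{1-s}{2s}$; this bootstrap, not a direct scaling, is the source of the $-0^+$ in the final statement for $u_t$ and of the $\logLip_t$ for $s\leq 1/3$. Your proposal skips the bootstrap entirely. (Your observation that $w=u_t$ formally solves an obstacle problem is not used in the paper and does not appear to shortcut any of this.)
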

Let us make some comments.
First of all we recall that, for the stationary version of the obstacle problem, solutions belong to $C_{x}^{1+s}(\R^n)$
(or equivalently, $\Df u \in C_{x}^{1-s}(\R^n)$),
and such a regularity result is optimal \cite{silv,cafsalsil}.
Hence, at least concerning the spatial regularity, our result is optimal, too.

Once the $C_x^{1-s}$-regularity of $\Df u$ is established, the fact that $s=1/3$ plays a special role is not surprising: indeed,
the operator $\p_t+\Df$ is invariant under the scaling $(t,x)\mapsto (\l^{2s}t,\l x)$. Hence, a spatial regularity $C^{1-s}_x$ naturally corresponds to a time regularity
$C^{\frac{1-s}{2s}}_t$, provided $\frac{1-s}{2s}< 1$, that is, $s > 1/3$ (see \eqref{eq:infty regularity holder 1}-\eqref{eq:infty regularity holder 2}
in the Appendix).

Finally, concerning the regularity in time, when $s=1/2$ one can construct traveling wave solutions
which are $C^{1+1/2}$ both in space and time, see Remark \ref{rmk:C1}.
Hence our result is almost optimal in time, at least when $s=1/2$ (the result would be optimal if we did not have the $0^+$
in the H\"older exponent). Moreover, the regularity in time is almost optimal also in the limit
$s \to 1$ (since, when $s=1$, it is well-known that solutions are $C^1$ in time and $C^{1,1}$ in space \cite{brekind,caffActa,cafffried-contStefan}).
Hence, it may be expected that our result is almost optimal in time for all $s \in (0,1)$ (or at least for $s> 1/3$).

\subsection{Structure of the paper}

The paper is structured as follows:
first, in Section \ref{sect:prelim} we discuss some basic properties of solutions of \eqref{eq:frac heat},
like the validity of a comparison principle, the Lipschitz regularity in space-time, the semiconvexity in space, and
the boundedness of $\Df u$. Moreover, we will show that solutions are $C^1$ for $s \geq 1/2$,
and, as explained in Remark \ref{rmk:C1},
$C^1$-regularity in space is optimal when $s=1/2$ unless one exploits the additional information that the solution coincides with the obstacle at the initial time.

In Section \ref{sect:opt reg}, we first use an iteration method to show that, for any $t>0$, $\Df u(t)$ is $C_{x}^{\a}$ near any free boundary point (Subsection \ref{sect:general C1a}).
Then, we prove a monotonicity formula which allows to show that $\Df u(t)$ is $C_{x}^{1-s}$ near any free boundary point for all $t>0$  (Subsection \ref{subsect:monot}).
Finally, combining the fact that $\Df u(t)$ is $C_{x}^{1-s}$ on the contact set with equation \eqref{eq:frac heat}, a bootstrap argument 
allows to prove Theorem \ref{thm:main} (Subsection \ref{sect:C1s u}).

In Section \ref{sect:general eq} we briefly describe what are the main modifications to perform in order to extend the regularity result in Theorem \ref{thm:main}
to solutions of \eqref{eq:tilde LL} when $s>1/2$, leaving the details to some future work.

Finally, in the appendix we collect some regularity properties of the fractional heat operator $\p_t+\Df$.

\section{Basic properties of solutions}\label{sect:prelim}

Here we discuss some elementary properties of solutions of \eqref{eq:frac heat}.
Actually, since many of them do not rely on the fact that $u$ coincides with the obstacle at time $0$,
we consider solutions to
\begin{equation}
\label{eq:frac heat no initial}
\left\{
\begin{array}{ll}
\min\{u_t + \Df u,u-\psi\}=0\quad \text{on }[0,T]\times \R^n,\\
u(0)=u_0\quad \text{on } \R^n,
\end{array}
\right.
\end{equation}
where $u_0 \geq \psi$ is a globally Lipschitz semiconvex function.
Most of the properties of $u$ will be a consequence of the following general comparison principle:

\begin{lemma}[Comparison principle]
\label{lemma:comparison}
Let
$\psi,\tilde\psi:\R^n\to \R$ be two continuous functions, and
assume that $u,\tilde u:[0,T]\times \R^n\to \R$ are viscosity solutions of 
\begin{equation}
\label{eq:gen frac heat 1}
\left\{
\begin{array}{l}
\min\{ u_t + \Df u,u-\psi\}=0\quad \text{on }[0,T]\times \R^n,\\
u(0)=u_0\quad \text{on }\R^n,
\end{array}
\right.
\end{equation}
and
\begin{equation}
\label{eq:gen frac heat 2}
\left\{
\begin{array}{l}
\min\{ \tilde u_t + \Df \tilde u,\tilde u-\tilde \psi\}=0\quad \text{on }[0,T]\times \R^n,\\
\tilde u(0)=\tilde u_0\quad \text{on }\R^n,
\end{array}
\right.
\end{equation}
respectively.
Assume that $u_0\leq \tilde u_0$ and $\psi \leq \tilde\psi$.
Then $u(t) \leq \tilde u(t)$ for all $t \in [0,T]$. 
\end{lemma}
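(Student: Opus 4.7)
The plan is to approach this through a penalization procedure, as hinted at in the footnote after \eqref{eq:frac heat}: I would replace the variational inequalities \eqref{eq:gen frac heat 1} and \eqref{eq:gen frac heat 2} by smooth semilinear nonlocal parabolic equations whose solutions can be compared by a maximum principle, and then pass to the limit. Concretely, pick $\beta_\eps \in C^\infty(\R)$ non-decreasing, with $\beta_\eps \equiv 0$ on $(-\infty,0]$ and $\beta_\eps(r)\to +\infty$ as $\eps\to 0$ for every $r>0$, and consider
\begin{equation*}
\p_t u^\eps + \Df u^\eps = \beta_\eps(\psi - u^\eps),\qquad u^\eps(0)=u_0,
\end{equation*}
and analogously $\tilde u^\eps$ with $(\tilde\psi,\tilde u_0)$ in place of $(\psi,u_0)$. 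Existence of smooth classical solutions for such semilinear nonlocal equations with bounded Lipschitz data is standard, and $u^\eps\to u$, $\tilde u^\eps\to \tilde u$ locally uniformly by the usual stability of viscosity solutions under vanishing penalty.

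Setting $w^\eps := u^\eps - \tilde u^\eps$ and subtracting the two equations, using $\psi\le \tilde\psi$ together with the monotonicity of $\beta_\eps$ (for the $\psi$-term) and a mean value argument (for the $u^\eps$-term), I would obtain
\begin{equation*}
\p_t w^\eps + \Df w^\eps + c^\eps\, w^\eps \le 0\quad\text{on }(0,T]\times\R^n,\qquad w^\eps(0,\cdot)=u_0-\tilde u_0\le 0,
\end{equation*}
with a non-negative (but possibly unbounded) coefficient $c^\eps := \beta_\eps'(\xi^\eps)\ge 0$. The heart of the proof is a non-local maximum principle for this linear inequality: assuming by contradiction that $\sup w^\eps >0$, replace $w^\eps$ by $\hat w := w^\eps - \eta t$ for small $\eta>0$ and look at a point $(t_*,x_*)$ realizing (or nearly realizing) the positive supremum of $\hat w$. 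There $\p_t\hat w\ge 0$; the kernel representation \eqref{eq:equivalent frac lapl} gives $\Df \hat w(t_*,x_*)\ge 0$ since $\hat w(x')-\hat w(x_*)\le 0$ for all $x'$; and $c^\eps w^\eps(t_*,x_*)\ge 0$ since $w^\eps(t_*,x_*)>0$ and $c^\eps\ge 0$. Adding these,
\begin{equation*}
0 \le \p_t\hat w + \Df\hat w + c^\eps w^\eps = \bigl(\p_t w^\eps + \Df w^\eps + c^\eps w^\eps\bigr) - \eta \le -\eta,
\end{equation*}
a contradiction. Sending $\eta\to 0$ yields $w^\eps\le 0$, and then $\eps\to 0$ gives the claim.

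The main technical obstacle I anticipate is ensuring that the supremum of $\hat w$ is actually attained, since $w^\eps$ need not vanish at infinity. I would handle this either by first running the argument on compactly supported perturbations of $\psi,\tilde\psi,u_0,\tilde u_0$ (exploiting the integrability assumption $\int|\psi|/(1+|x|)^{n+2s}<+\infty$ to recover the general case by approximation), or by inserting a small auxiliary penalty of the form $\delta \sqrt{1+|x|^2}$ and checking that its contribution to $\Df$ is uniformly small and vanishes as $\delta\to 0$. Either device converts the formal maximum-point argument into a rigorous one, after which the rest of the scheme goes through without difficulty.
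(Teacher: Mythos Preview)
Your proposal is correct and follows essentially the same penalization approach as the paper: the paper also approximates by smooth solutions of $u_t^\e + \Df u^\e = \beta_\e(u^\e - \psi_\e)$ (with $\beta_\e(s)=e^{-s/\e}$ and compactly supported data $\psi_\e, u_0^\e \in C_c^\infty(\R^n)$), observes that $\tilde u^\e$ is a supersolution of the equation satisfied by $u^\e$ thanks to $\psi_\e\le\tilde\psi_\e$ and the monotonicity of $\beta_\e$, and then invokes the standard parabolic comparison principle. The only cosmetic differences are the sign convention in the penalty term and that the paper handles the issue at spatial infinity by taking compactly supported approximations from the outset rather than by your suggested auxiliary barrier.
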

\begin{proof}
We use a penalization method: it is well-known that solutions
of \eqref{eq:frac heat no initial} can be constructed as
a limit of $u^\e$ as $\e\to 0$, where $u^\e$ is smooth solutions of
\begin{equation}
\label{eq:approx frac heat}
\left\{
\begin{array}{ll}
u_t^\e +\Df u^\e = \beta_\e(u^\e - \psi_\e)&\text{on }[0,T]\times \R^n\\
u^\e(0)=u_0^\e \geq \psi_\e\quad \text{on } \R^n,,
\end{array}
\right.
\end{equation}
with $u_0^\e,\psi_\e \in C^\infty_c(\R^n)$, $\beta_\e(s)=e^{-s/\e}$, $\psi_\e \to \psi$, $\Df\psi_\e \to \Df\psi$, and $u_0^\e\to u_0$
locally uniformly as $\e\to 0$ (see for instance \cite[Chapter 3]{chipotbook} for a proof in the classical parabolic case).

Hence, it suffices to prove the comparison principle at the level of the approximate equations, assuming $u^\e(0)\leq \tilde u^\e(0)$ and $\psi_\e\leq\tilde\psi_\e$.
Let us observe that, since $\psi_\e  \leq \tilde\psi_\e$ and $\beta_\e'\leq 0$, we have
$$
\beta_\e(\cdot - \psi_\e)\leq \beta_\e(\cdot - \tilde \psi_\e),
$$
which implies
$$
u_t^\e +\Df u^\e = \beta_\e(u^\e - \psi_\e)\qquad \text{on }[0,T]\times \R^n\\
$$
$$
\tilde u_t^\e +\Df \tilde u^\e=\beta_\e(\tilde u^\e - \tilde \psi_\e) \geq \beta_\e(\tilde u^\e - \psi_\e)\qquad\text{on }[0,T]\times \R^n.
$$
Since $u^\e(0)\leq \tilde u^\e(0)$, by standard comparison principle for parabolic equations (see for instance the argument in the proof 
of Lemma \ref{lemma:Linfty bound frac heat} below)
we get $u^\e \leq \tilde u^\e$, as desired.
\end{proof}

The following important properties are an immediate consequence of the above result:
\begin{lemma}
\label{lemma:basic}
Let $u$ be a solution of \eqref{eq:frac heat no initial}, and assume that $u_0$ and $\psi$ are globally Lipschitz and $C_0$-semiconvex. Then:
\begin{enumerate}
\item[(i)] $u(t)$ is Lipschitz for all $t \in [0,T]$, with $\|\nabla u(t)\|_{L^\infty(\R^n)} \leq \max\{\|\nabla u_0\|_{L^\infty(\R^n)},\|\nabla \psi\|_{L^\infty(\R^n)}\}$.
\item[(ii)] $u(t)$ is $C_0$-semiconvex for all $t \in [0,T]$.
\end{enumerate}
Moreover, if $u_0=\psi$ then
\begin{enumerate}
\item[(iii)] $[0,T]\ni t\mapsto u(t,x)$ is non-decreasing in time.
\end{enumerate}
\end{lemma}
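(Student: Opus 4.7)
The plan is to derive all three properties from the comparison principle of Lemma~\ref{lemma:comparison} by producing, from $u$ itself, a second solution (or supersolution) of an obstacle problem whose data can be ordered against those of \eqref{eq:frac heat no initial}. Spatial translation invariance of $\partial_t + \Df$ handles (i) and (ii), while time translation invariance handles (iii).

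For (i), I would set $L := \max\{\|\nabla u_0\|_{L^\infty(\R^n)},\|\nabla\psi\|_{L^\infty(\R^n)}\}$ and fix $h \in \R^n$. The function $\tilde u(t,x) := u(t,x+h)$ is the unique viscosity solution of \eqref{eq:frac heat no initial} with initial datum $u_0(\cdot+h)$ and obstacle $\psi(\cdot+h)$, while $u(t,x) + L|h|$ is plainly a solution with initial datum $u_0 + L|h|$ and obstacle $\psi + L|h|$. Since $u_0$ and $\psi$ are $L$-Lipschitz, one has $u_0(\cdot+h) \leq u_0 + L|h|$ and $\psi(\cdot+h) \leq \psi + L|h|$, so Lemma~\ref{lemma:comparison} yields $u(t,x+h) \leq u(t,x) + L|h|$; replacing $h$ by $-h$ gives (i).

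For (ii), I would fix $h \in \R^n$ and set
$$v(t,x) := \frac{u(t,x+h) + u(t,x-h)}{2} + \frac{C_0|h|^2}{2}.$$
The $C_0$-semiconvexity of $u_0$ and of $\psi$ gives $v(0,\cdot) \geq u_0$ and $v \geq \psi$ pointwise, while the viscosity inequality $u_t + \Df u \geq 0$ implied by \eqref{eq:frac heat no initial} transfers, by translation invariance and averaging, to $v_t + \Df v \geq 0$. Hence $v$ is a viscosity supersolution of the same obstacle problem as $u$, and the inequality $v \geq u$, i.e., the $C_0$-semiconvexity of $u(t,\cdot)$, follows from the solution-versus-supersolution version of Lemma~\ref{lemma:comparison}.

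For (iii), assume $u_0 = \psi$, fix $h \in (0,T]$, and set $\hat u(t,x) := u(t+h,x)$ on $[0,T-h] \times \R^n$. Time-translation invariance makes $\hat u$ a viscosity solution of \eqref{eq:frac heat no initial} on this interval with obstacle $\psi$ and initial datum $u(h,\cdot) \geq \psi = u_0$, the inequality being nothing but the obstacle constraint for $u$. Applying Lemma~\ref{lemma:comparison} on $[0,T-h]$ then yields (iii). The main point requiring care throughout is the solution-versus-supersolution extension of Lemma~\ref{lemma:comparison} used in (ii); this is however a routine variant of the penalization proof of that lemma, in which one verifies $v_t^\e + \Df v^\e \geq \beta_\e(v^\e - \psi_\e)$ using both the monotonicity and the convexity of $\beta_\e(s) = e^{-s/\e}$ together with the semiconvexity of $\psi_\e$, and then applies classical parabolic comparison for \eqref{eq:approx frac heat}.
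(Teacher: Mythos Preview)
Your proof is correct and follows essentially the same approach as the paper --- translation invariance plus the comparison principle for (i) and (ii), time-shift plus comparison for (iii). For (ii) you are in fact more careful than the paper, which simply asserts that $u(t,x+h)+u(t,x-h)+2C_0|h|^2$ ``solves'' an obstacle problem (it is only a supersolution) and compares it with $2u$ via Lemma~\ref{lemma:comparison}; your explicit identification of the supersolution issue and the penalization sketch using the convexity and monotonicity of $\beta_\e$ fill precisely what the paper leaves implicit.
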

\begin{proof}
(i) Observe that, for every vector $v\in\R^n$ and any constant $C\in\R$,
$u(t,x+v) + C|v|$ solves \eqref{eq:gen frac heat 1} starting from $u_0(x+v)+C|v|$ with obstacle
$\psi(x+v)+C|v|$.
Moreover, if $C:=\max\{\|\nabla u_0\|_{L^\infty(\R^n)},\|\nabla \psi\|_{L^\infty(\R^n)}\}$, then
$u_0(x+v)+C|v|\geq u_0(x)$ and $\psi(x+v)+C|v|\geq \psi(x)$. Hence, by Lemma \ref{lemma:comparison} we obtain
$$
u(t,x+v) + C|v| \geq u(t,x)\qquad \forall\, x,v \in\R^n,\,t\geq 0.
$$
The Lipschitz regularity of $u(t)$ follows.\\
(ii) As above, we just remark that $u(t,x+v)+u(t,x-v)+C|v|^2$ solves \eqref{eq:gen frac heat 1}
for every $C \in \R$. Hence, by choosing $C:=2C_0$ we get
$u_0(x+v) +u_0(x-v)+2C_0|v|^2 \geq 2u_0(x)$ and $\psi(x+v) +\psi(x-v)+2C_0|v|^2 \geq 2\psi(x)$, and we conclude as above using Lemma \ref{lemma:comparison}.\\
(iii) We observe that, for any $\e \geq 0$,
the function $u(t+\e,x)$ solves \eqref{eq:frac heat}
starting from  $u(\e,\cdot)$. Hence, since $u(\e,\cdot) \geq \psi$, by the comparison principle we obtain
$$
u(t+\e,x) \geq u(t,x)\qquad \forall \,t, \e\geq 0.
$$
\end{proof}

We now prove the following important bounds:
\begin{lemma}
\label{lemma:Linfty bound frac heat}
Let $u$ be a solution of \eqref{eq:frac heat no initial}. Then
\begin{equation}
\label{eq:Linfty bound frac heat}
0 \leq u_t + \Df u \leq \|\Df \psi\|_{L^\infty(\R^n)},
\end{equation}
\begin{equation}
\label{eq:Linfty bound ut}
\|u_t\|_{L^\infty([0,T]\times \R^n)}  \leq \|\Df u_0\|_{L^\infty(\R^n)} ,
\end{equation}
In particular $\Df u$ is bounded, with
\begin{equation}
\label{eq:Linfty bound Df u}
\|\Df u\|_{L^\infty([0,T]\times \R^n)} \leq \|\Df \psi\|_{L^\infty(\R^n)}+\|\Df u_0\|_{L^\infty(\R^n)}.
\end{equation}
\end{lemma}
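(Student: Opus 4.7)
The lower bound in \eqref{eq:Linfty bound frac heat} is built into the viscosity formulation $\min\{u_t + \Df u, u - \psi\} = 0$, and bound \eqref{eq:Linfty bound Df u} will be immediate from $\|\Df u\|_\infty \leq \|u_t + \Df u\|_\infty + \|u_t\|_\infty$ once the other two estimates are in place. So the work reduces to proving \eqref{eq:Linfty bound ut} and the upper half of \eqref{eq:Linfty bound frac heat}. The former follows from Lemma \ref{lemma:comparison} applied to explicit linear-in-time barriers together with a time-translation argument, while the latter -- which I expect to be the main obstacle -- requires a nonlocal maximum principle on the penalized equation from the proof of Lemma \ref{lemma:comparison}.

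\textbf{Lipschitz-in-time bound \eqref{eq:Linfty bound ut}.} My candidates are $\bar u(t,x) := u_0(x) + \|\Df u_0\|_\infty t$ and $\underline u(t,x) := u_0(x) - \|\Df u_0\|_\infty t$. A direct computation shows that $\bar u$ is a supersolution of \eqref{eq:frac heat no initial} starting from $u_0$ (using $\bar u \geq u_0 \geq \psi$ and $\bar u_t + \Df \bar u = \|\Df u_0\|_\infty + \Df u_0 \geq 0$) and $\underline u$ is the corresponding subsolution, so (the sub/supersolution version of) Lemma \ref{lemma:comparison} yields $|u(t,x) - u_0(x)| \leq \|\Df u_0\|_\infty t$. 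To promote this to a bound valid at every $t$, I would slide in time: for any $h > 0$ the function $u(t+h, x)$ solves the obstacle problem with obstacle $\psi$ and initial datum $u(h, \cdot)$, while $u(t, x) + \|\Df u_0\|_\infty h$ solves it with obstacle $\psi + \|\Df u_0\|_\infty h \geq \psi$ and initial datum $u_0 + \|\Df u_0\|_\infty h \geq u(h,\cdot)$; a second application of Lemma \ref{lemma:comparison} yields $u(t+h, x) \leq u(t, x) + \|\Df u_0\|_\infty h$, and the symmetric comparison the reverse inequality.

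\textbf{Upper bound in \eqref{eq:Linfty bound frac heat}.} The natural barrier $\psi + \|\Df \psi\|_\infty t$ starts below $u_0$ and so cannot be compared directly to $u$; I would instead run a maximum principle on the smooth penalized approximations $u^\e$ constructed in the proof of Lemma \ref{lemma:comparison}. Writing $v^\e := u^\e - \psi_\e$ and $w^\e := u^\e_t + \Df u^\e = \beta_\e(v^\e)$, convexity of $\beta_\e(s) = e^{-s/\e}$ together with the integral representation of $\Df$ gives the pointwise Kato-type bound
$$
\Df\bigl(\beta_\e(v^\e)\bigr)(x) \leq \beta_\e'\bigl(v^\e(x)\bigr)\,\Df v^\e(x),
$$
which, combined with $v^\e_t = u^\e_t$ and $\Df v^\e = w^\e - u^\e_t - \Df \psi_\e$, yields
$$
(\p_t + \Df)\, w^\e \leq \beta_\e'(v^\e)\bigl(w^\e - \Df \psi_\e\bigr).
$$
Because $\beta_\e' \leq 0$, the right-hand side is nonpositive on $\{w^\e > \|\Df \psi_\e\|_\infty\}$, so on this superlevel set $w^\e$ is a subsolution of the fractional heat equation. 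Initializing the penalty scheme with $u_0^\e := u_0 + \max(-\e\log\|\Df\psi_\e\|_\infty, 0)$ (still converging to $u_0$ as $\e \to 0$) arranges $\beta_\e(u_0^\e - \psi_\e) \leq \|\Df\psi_\e\|_\infty$, i.e.\ $w^\e \leq \|\Df\psi_\e\|_\infty$ at $t=0$; the nonlocal parabolic maximum principle then propagates this bound to all $t \in [0,T]$. Passing to the limit $\e \to 0$ in the viscosity sense gives $u_t + \Df u \leq \|\Df\psi\|_\infty$, and \eqref{eq:Linfty bound Df u} follows from the triangle inequality.
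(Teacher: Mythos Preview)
Your argument is essentially correct, but both halves differ from the paper's route.

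For the bound on $u_t$, you work at the level of the solution itself, using linear-in-time barriers $u_0 \pm \|\Df u_0\|_\infty t$ together with the comparison principle and a time-shift. The paper instead stays with the penalized problem: differentiating $u_t^\e + \Df u^\e = \beta_\e(u^\e - \psi_\e)$ in $t$ yields a linear equation for $w^\e = u_t^\e$ with zero-order coefficient $\beta_\e' \leq 0$, and a direct maximum principle gives $\|u_t^\e\|_\infty \leq \|\Df u_0^\e\|_\infty$. Your approach is arguably more elementary (no differentiation of the approximate equation), but it relies on a sub/supersolution extension of Lemma \ref{lemma:comparison}, which is standard but not literally what is stated there.

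For the upper bound on $u_t + \Df u$, you derive a parabolic inequality for $w^\e = \beta_\e(v^\e)$ via the Kato-type estimate $\Df(\beta_\e(v^\e)) \leq \beta_\e'(v^\e)\,\Df v^\e$ and then propagate an initial bound. The paper is more direct: since $\beta_\e$ is decreasing, $\sup \beta_\e(v^\e)$ is realized at the infimum of $v^\e$; at a (perturbed) interior minimum one has $u_t^\e = 0$ and $\Df v^\e \leq 0$, so the equation immediately gives $\beta_\e(v^\e) \leq \Df \psi_\e \leq \|\Df\psi_\e\|_\infty$. This bypasses both the Kato inequality and the need to adjust $u_0^\e$ so that $w^\e(0) \leq \|\Df\psi_\e\|_\infty$. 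Your initial-data hack works when $\|\Df\psi\|_\infty > 0$ (so the shift is $O(\e)$), but the paper's minimum-point argument handles the bound uniformly without touching the data.
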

\begin{proof}
As in the proof of Lemma \ref{lemma:comparison}, we use a penalization method: we consider solutions $u^\e$ to \eqref{eq:approx frac heat}, and
we prove a uniform (with respect to $\e$) $L^\infty$-bound on both 
$\beta_\e(u^\e - \psi_\e)$ and $u_t^\e$.

\textit{$\bullet$ $L^\infty$-bound on $\beta_\e(u^\e - \psi_\e)$.} Since $\beta_\e \geq 0$,
we only need an upper bound.

Assume that $\inf_{[0,T]\times \R^n}(u^\e - \psi_\e) <0$
(otherwise the problem is trivial), and let $\var$ be a smooth function which grows like  $|x|^s$ at infinity.
Then, since $u^\e$ vanishes at infinity (being a solution to a smooth parabolic equation
starting from a compactly supported initial datum),
for any $\delta>0$ small
we can consider $(t_\e^\delta,x_\e^\delta)$ a minimum point
for $u^\e - \psi_\e +\frac{\delta}{T-t}+\delta \var$ over $[0,T]\times\R^n$. Of course,
$\min_{[0,T]\times \R^n}\left(u^\e - \psi_\e+\frac{\delta}{T-t}+\delta \var\right) <0$ for $\delta$ sufficiently small,
which implies that $(t_\e^\delta,x_\e^\delta)$ belongs to the interior of $(0,T)\times \R^n$.
Hence
$$
u_t^\e(t_\e^\delta,x_\e^\delta) + \frac{\delta}{(T-t^\delta_\e)^2}=0,\qquad
\Df u^\e(t_\e^\delta,x_\e^\delta) - \Df \psi_\e(x_\e^\delta) +\delta  \Df \var(x_\e^\delta)\leq 0,
$$
which combined with \eqref{eq:approx frac heat} gives
$$
\beta_\e(u^\e - \psi_\e)(t_\e^\delta,x_\e^\delta) \leq
\Df \psi_\e(x_\e^\delta)-\frac{\delta}{(T-t^\delta_\e)^2}- \delta  \Df \var(x_\e^\delta)\leq \|\Df \psi_\e\|_{L^\infty(\R^n)}+O(\delta).
$$
Since $(u^\e - \psi_\e)(t_\e^\delta,x_\e^\delta)\to \inf_{[0,T]\times \R^n}(u^\e - \psi_\e)$ as $\d\to 0$ and $\beta_\e'\leq 0$ we obtain
$$
\sup_{[0,T]\times \R^n}\beta_\e(u^\e - \psi_\e) = \lim_{\d\to 0}\beta_\e (u^\e - \psi_\e)(t_\e^\delta,x_\e^\delta) \leq \|\Df \psi_\e\|_{L^\infty(\R^n)},
$$
so that \eqref{eq:Linfty bound frac heat} follows letting $\e\to 0$.

\textit{$\bullet$ $L^\infty$-bound on $u_t^\e$.} We use the same argument as in \cite[Lemma 2.1]{frikin}:
differentiating \eqref{eq:approx frac heat} with respect to $t$ we obtain that $w^\e:=u_t^\e$ solves
$$
\left\{
\begin{array}{ll}
w^\e_t +\Df w^\e = \beta_\e'(u^\e - \psi_\e)w^\e&\text{on }[0,T]\times \R^n\\
w^\e(0)=-\Df u^\e(0)\quad \text{on }\R^n.
\end{array}
\right.
$$
Since $\beta_\e' \leq 0$ and $\|w^\e(0)\|_{L^\infty}=\|\Df u_0^\e\|_{L^\infty}$, using a maximum principle argument (as above), we infer that 
$$
\|u_t^\e\|_{L^\infty([0,T]\times\R^n)}=\|w^\e\|_{L^\infty([0,T]\times\R^n)}
\leq \|w^\e(0)\|_{L^\infty(\R^n)}=\|\Df u_0^\e\|_{L^\infty(\R^n)}.
$$
Letting $\e\to 0$ we get \eqref{eq:Linfty bound ut}, as desired.
\end{proof}

The above result together with Lemma \ref{lemma:basic}(i)
gives the following:

\begin{corollary}[Lipschitz regularity in space-time]
\label{cor:Lip}
Let $u$ be a solution of \eqref{eq:frac heat no initial}. Then
$$
\|u_t\|_{L^\infty([0,T]\times \R^n)}+
\|\nabla u\|_{L^\infty([0,T]\times \R^n)}\leq
\max\{\|\nabla u_0\|_{L^\infty(\R^n)},\|\nabla \psi\|_{L^\infty(\R^n)}\}+
\|\Df u_0\|_{L^\infty(\R^n)}.
$$
\end{corollary}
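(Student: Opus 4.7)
The plan is immediate: both halves of the estimate have already been proved separately, and the corollary simply packages them into a single space-time inequality. First, I would invoke Lemma~\ref{lemma:basic}(i), which ensures that for every fixed $t\in[0,T]$ the slice $u(t,\cdot)$ is Lipschitz with
$$
\|\nabla u(t)\|_{L^\infty(\R^n)} \leq \max\{\|\nabla u_0\|_{L^\infty(\R^n)},\|\nabla \psi\|_{L^\infty(\R^n)}\}.
$$
Since the right-hand side is independent of $t$, taking the supremum over $t\in[0,T]$ converts this into the uniform spatial gradient bound on the whole parabolic cylinder.

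Next I would add to this the time-derivative estimate \eqref{eq:Linfty bound ut} from Lemma~\ref{lemma:Linfty bound frac heat}, namely
$$
\|u_t\|_{L^\infty([0,T]\times\R^n)} \leq \|\Df u_0\|_{L^\infty(\R^n)}.
$$
Summing the two bounds produces exactly the inequality stated in Corollary~\ref{cor:Lip}.

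There is no real obstacle here: the substantive content lies in Lemma~\ref{lemma:basic}(i) (proved via the comparison principle applied to the translated solution $u(t,x+v)+C|v|$) and in the penalization/maximum-principle argument used for \eqref{eq:Linfty bound ut}. The only minor point worth noting is that the spatial estimate from Lemma~\ref{lemma:basic}(i) is stated pointwise in $t$, so one has to observe that the bound is uniform in $t$ before passing to the $L^\infty$-norm on $[0,T]\times\R^n$; this is automatic since the right-hand side depends only on the initial datum and the obstacle.
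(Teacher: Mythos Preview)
Your proposal is correct and matches the paper's approach exactly: the paper simply states that the corollary follows from Lemma~\ref{lemma:Linfty bound frac heat} together with Lemma~\ref{lemma:basic}(i), which is precisely the combination you describe.
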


In the sequel we will also need the following result:
\begin{lemma}
\label{lem:sign frac lapl}
Let $u$ be a solution of \eqref{eq:frac heat} with
$$
\|\nabla \psi\|_{L^\infty(\R^n)}+\|\Df \psi\|_{L^\infty(\R^n)}+\|\nabla u_0\|_{L^\infty(\R^n)}+
\|\Df u_0\|_{L^\infty(\R^n)}<+\infty,
$$
and fix $t_0>0$. Then
\begin{equation}
\label{eq:frac lapl leq 0}
0 \leq \Df u(t_0) \leq \|\Df \psi\|_{L^\infty(\R^n)} \qquad \text{for a.e. $x \in \{u(t_0)=\psi\}$},
\end{equation}
\begin{equation}
\label{eq:frac lapl geq 0}
\Df u(t_0) \leq 0 \qquad \text{on $\{u(t_0)>\psi\}$}.
\end{equation}
\end{lemma}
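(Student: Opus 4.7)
My plan is to prove the three inequalities by three different arguments: the first uses the equation together with time-monotonicity, the second is an elementary computation with the integral representation of $\Df$, and the third is a limiting argument exploiting the nested structure of the contact set.

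For \eqref{eq:frac lapl geq 0}, the set $\{u(t_0) > \psi\}$ is (relatively) open in $\R^n$ by continuity of $u(t_0)$ and $\psi$, and around any of its points the obstacle constraint is locally inactive in space-time, so the viscosity equation reduces to $u_t + \Df u = 0$. Combining this with $u_t \geq 0$ from Lemma \ref{lemma:basic}(iii) (which applies because $u_0 = \psi$), we obtain $\Df u(t_0) = -u_t(t_0) \leq 0$ on $\{u(t_0) > \psi\}$.

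For the upper bound in \eqref{eq:frac lapl leq 0}, fix $x_0$ with $u(t_0, x_0) = \psi(x_0)$ and set $v := u(t_0, \cdot) - \psi$. Since $v \geq 0$ and $v(x_0) = 0$, the integral representation \eqref{eq:equivalent frac lapl} yields
\begin{equation*}
\Df \psi(x_0) - \Df u(t_0, x_0) \;=\; \int_{\R^n} \frac{v(x') - v(x_0)}{|x' - x_0|^{n+2s}}\, dx' \;=\; \int_{\R^n} \frac{v(x')}{|x' - x_0|^{n+2s}}\, dx' \;\geq\; 0,
\end{equation*}
so $\Df u(t_0, x_0) \leq \Df\psi(x_0) \leq \|\Df \psi\|_{L^\infty(\R^n)}$. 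Convergence of the integral is ensured by semiconvexity of $u(t_0, \cdot)$ (Lemma \ref{lemma:basic}(ii)), the $C^2$-regularity of $\psi$, and the assumed decay of $\psi$.

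The lower bound in \eqref{eq:frac lapl leq 0} is the most delicate. The key observation is that the contact set is \emph{nested decreasing} in time: if $x_0 \in \{u(t_0) = \psi\}$, then by Lemma \ref{lemma:basic}(iii), $u(t, x_0) \leq u(t_0, x_0) = \psi(x_0)$ for every $t \in [0, t_0]$, while the obstacle condition forces the opposite inequality, so $u(\cdot, x_0) \equiv \psi(x_0)$ throughout $[0, t_0]$. In particular $u_t(t, x_0) = 0$ classically for every $t \in (0, t_0)$, and combining this with Lemma \ref{lemma:Linfty bound frac heat} one deduces $\Df u(t, x_0) \geq 0$ for every $t \in (0, t_0)$ and a.e.\ $x_0 \in \{u(t_0) = \psi\}$. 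A final passage to the limit $t \to t_0^-$ produces the desired inequality at $t = t_0$. The main obstacle here is justifying the convergence $\Df u(t, x_0) \to \Df u(t_0, x_0)$: the Lipschitz-in-$t$ bound (Corollary \ref{cor:Lip}) gives $u(t, \cdot) \to u(t_0, \cdot)$ uniformly, but the naive spatial Lipschitz bound only produces an integrable dominating function in \eqref{eq:equivalent frac lapl} when $s < 1/2$. For $s \geq 1/2$ one needs the symmetric second-order form of $\Df$ together with the semiconvexity of $u(t, \cdot)$ (Lemma \ref{lemma:basic}(ii)) to obtain a dominating function uniformly in $t$, after which dominated convergence concludes.
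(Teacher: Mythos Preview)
Your argument for \eqref{eq:frac lapl geq 0} matches the paper's. Your upper bound in \eqref{eq:frac lapl leq 0} is different and more direct: you compare $\Df u(t_0)$ and $\Df\psi$ pointwise via the integral representation, whereas the paper extracts both bounds from the a~priori estimate \eqref{eq:Linfty bound frac heat} combined with $u_t=0$ a.e.\ on the contact set. For the lower bound, both arguments rest on the nested structure of the contact set and the space--time bound $0\le \Df u$ a.e.\ on $\{u=\psi\}$; the difference lies in the passage to a fixed time $t_0$. The paper avoids pointwise limits entirely: it uses that $t\mapsto \Df u(t)\in L^2_{\rm loc}$ is weakly continuous, tests against $\chi_{[t_0-\e,t_0]\times A}$ for Borel $A\subset\{u(t_0)=\psi\}$, and divides by $\e$. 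Your route, sending $t\to t_0^-$ at a fixed $x_0$, is viable but not quite as you stated it.

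The gap is your claim of dominated convergence when $s\ge 1/2$. Semiconvexity gives only a \emph{lower} bound on the symmetric second difference, $u(t,x_0+h)+u(t,x_0-h)-2u(t,x_0)\ge -C_0|h|^2$, and there is no matching upper bound available at this stage, so you cannot produce a two-sided dominating function for the local part of the integral. What does work is Fatou: writing $-\Df u(t,x_0)$ as a nonnegative integrand (after adding $C_0|h|^2$) plus a term continuous in $t$, exactly as in the proof of Proposition~\ref{prop:C1x reg}, you obtain that $t\mapsto -\Df u(t,x_0)$ is lower semicontinuous. Since $-\Df u(t,x_0)\le 0$ for a.e.\ $t<t_0$, lower semicontinuity yields $-\Df u(t_0,x_0)\le 0$, which is the desired lower bound. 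So your strategy is salvageable with this correction; the paper's weak-$L^2$ argument sidesteps the issue altogether.
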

\begin{proof}
Let us recall that, thanks to Corollary \ref{cor:Lip}, $u$ is Lipschitz in time. So, Lemma  \ref{lemma:basic}(iii) gives $u_t\geq 0$ a.e.

Moreover, since $u_t=0$ a.e. on the contact set $\{u=\psi\}$, $u$ satisfies
$$
u_t +\Df u=0\quad \text{in }\{u>\psi\},
\qquad
u_t=0\quad \text{a.e. on }\{u=\psi\},
$$
which gives
$$
u_t +\Df u=\bigl(\Df u\bigr) \chi_{\{u=\psi\}}
$$
both in the almost everywhere sense and in the sense of distribution. (Observe that the above formula makes sense
since $\Df u$ is a bounded function, see Lemma \ref{lemma:Linfty bound frac heat}).
Hence, $u$ solves the smooth parabolic equation $u_t+\Df u =f$, with $f$ globally bounded and vanishing inside the open set $\{u>\psi\}$. So,
a simple application of Duhamel formula shows that $u$ is smooth inside $\{u>\psi\}$.
In particular, this fact combined with the non-negativity of $u_t$ implies
$$
\Df u(t_0)=-u_t(t_0) \leq 0 \qquad \text{on $\{u(t_0)>\psi\}$},
$$
that is, \eqref{eq:frac lapl geq 0}.

We now prove \eqref{eq:frac lapl leq 0}.
Since $u_t=0$ a.e.
on the contact set  $\{u=\psi\}$, \eqref{eq:Linfty bound frac heat}
gives
\begin{equation}
\label{eq:bound space-time}
0 \leq \Df u \leq \|\Df \psi\|_{L^\infty}\qquad \text{ for a.e. $(t,x) \in \{u=\psi\}$.}
\end{equation}
Now, to show that the bound $0 \leq \Df u(t_0) \leq \|\Df \psi\|_{L^\infty(\R^n)}$ holds a.e. on $\R^n$ \textit{for every $t_0 \in [0,T]$},
we observe that the map
$$
t \mapsto u(t) \in L^2_{{\rm loc}}(\R^n)
$$
is uniformly continuous
(this is a consequence of the Lipschitz continuity in time, see Corollary \ref{cor:Lip}), which together with the uniform bound \eqref{eq:Linfty bound frac heat}
implies that the map
$$
t \mapsto \Df u(t) \in L^2_{{\rm loc}}(\R^n)
$$
is weakly continuous. Thanks to this fact, we easily deduce the desired estimate.
Indeed, fix $\e>0$, $A\subset \{u(t_0)=\psi\}$ a bounded Borel set,
and test \eqref{eq:bound space-time} against the function $\chi_{[t_0-\e,t_0]}\chi_{A}$.
Since the sets $\{u(t)=\psi\}$ are decreasing in time (see Lemma \ref{lemma:basic}(iii)),
we have $[t_0-\e,t_0]\times A\subset \{u=\psi\}$, which together with \eqref{eq:bound space-time} gives
$$
0 \leq  \int_{t_0-\e}^{t_0}\int_{A} \Df u \leq \|\Df \psi\|_{L^\infty}\,|A|\,\e.
$$
Dividing by $\e$ and letting $\e \to 0$, by the weak-$L^2$ continuity of $t \mapsto \Df u(t)$ we deduce
$$
0 \leq \int_{A} \Df u(t_0) \leq \|\Df \psi\|_{L^\infty}\,|A|\qquad \forall\text{ $A\subset \{u(t_0)=\psi\}$ Borel bounded},
$$
so that the desired bound follows.
\end{proof}

We now show that the uniform semiconvexity of $u(t)$, together with the $L^\infty$-bound on $\Df u(t)$, implies that solutions are $C^1$ in space when $s \geq 1/2$
(actually, when $s>1/2$, by elliptic regularity theory the boundedness of $\Df u(t)$ implies that $u(t) \in C^{2s-0^+}_{\rm loc}(\R^n)$).
As we will show in Remark \ref{rmk:C1} below, unless the contact set shrinks in time, this regularity result is optimal for $s=1/2$.
\begin{proposition}[$C^1$-spatial regularity]
\label{prop:C1x reg}
Let $u$ be a solution of \eqref{eq:frac heat no initial} with $s \in [1/2,1)$.
Assume that $u_0$ and $\psi$ are semiconvex, and that $\|\Df u_0\|_{L^\infty(\R^n)}+\|\Df \psi\|_{L^\infty(\R^n)}<+\infty$.
Then $u(t) \in C^1(\R^n)$ for all $t \in [0,T]$.
Moreover the modulus of continuity of $\nabla u$ depends only on $s$,
the semiconvexity constant of $u_0$ and $\psi$, and on $\|\Df u_0\|_{L^\infty(\R^n)}+\|\Df \psi\|_{L^\infty(\R^n)}$.
\end{proposition}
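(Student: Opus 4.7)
\textit{Proof plan.} The strategy naturally splits into the regimes $s>1/2$ and $s=1/2$.

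For $s>1/2$ the result follows from pure elliptic regularity. By Corollary~\ref{cor:Lip} the function $u(t,\cdot)$ is globally Lipschitz, and by Lemma~\ref{lemma:Linfty bound frac heat} one has $\Df u(t,\cdot)\in L^\infty(\R^n)$. Standard fractional Schauder-type estimates for $\Df$ (Silvestre) then give $u(t,\cdot)\in C^{1,2s-1}_{\rm loc}(\R^n)$, with the modulus controlled quantitatively by $\|\nabla u(t)\|_\infty$ and $\|\Df u(t)\|_\infty$, hence by the constants listed in the statement.

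For $s=1/2$, elliptic regularity alone yields only a Zygmund/log-Lipschitz modulus, so the semiconvexity must be used in an essential way. I would argue by contradiction. Assume $u(t,\cdot)$ is not differentiable at some $x_0\in\R^n$. By Lemma~\ref{lemma:basic}(ii), $u(t,\cdot)$ is $C_0$-semiconvex, so the convex subdifferential at $x_0$ of the auxiliary convex function $u(t,\cdot)+\tfrac{C_0}{2}|\cdot|^2$ contains at least two distinct vectors $p_1\neq p_2$, i.e.\
\[
u(t,x)\geq u(t,x_0)+p_i\cdot(x-x_0)-\tfrac{C_0}{2}|x-x_0|^2,\qquad i=1,2.
\]
The symmetrised function $\Psi(x):=u(t,x_0+x)+u(t,x_0-x)-2u(t,x_0)$ is then even, $2C_0$-semiconvex, vanishes at the origin, and (combining the two estimates for $i=1$ at $x$ and $i=2$ at $-x$) obeys the key lower bound
\[
\Psi(x)\geq 2|q\cdot x|-C_0|x|^2,\qquad q:=\tfrac{p_1-p_2}{2}\neq 0.
\]
Moreover $\|\Df\Psi\|_{L^\infty}\leq 2M$ with $M:=\|\Df u(t)\|_\infty$. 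The contradiction now comes from the fact that the smooth barrier $\phi_\e(x):=2\sqrt{(q\cdot x)^2+\e^2}-2\e-C_0|x|^2$ touches $\Psi$ from below at the origin, while a direct polar computation shows $\Df\phi_\e(0)\to -\infty$ as $\e\to 0^+$—this is precisely because $\int_{B_1}|q\cdot y|/|y|^{n+1}\,dy=+\infty$ diverges logarithmically, the $s=1/2$ resonance. Combined with the viscosity subsolution property $\Df\Psi\leq 2M$ at $x_0$—transferred from the distributional $L^\infty$ bound by a standard truncation argument for nonlocal operators—this gives $\Df\phi_\e(0)\leq 2M$, contradicting $\Df\phi_\e(0)\to-\infty$.

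The principal technical obstacle is exactly the passage from the distributional bound $\Df\Psi\in L^\infty$ to a usable pointwise viscosity inequality at the non-differentiability point $x_0$: this requires replacing $\phi_\e$ by a globally bounded modification (truncating the $-C_0|x|^2$ outside a neighborhood of the origin), estimating the resulting tail contribution $\int_{B_r^c}\Psi(y)/|y|^{n+1}\,dy$—which is finite thanks to the Lipschitz growth of $\Psi$—and then pairing against $\chi$-type bumps to exploit $\|\Df\Psi\|_\infty\leq 2M$. Finally, the uniform modulus of continuity for $\nabla u(t,\cdot)$ depending only on $s$, on $C_0$ (controlled by $u_0$ and $\psi$ via Lemma~\ref{lemma:basic}(ii)), and on $\|\Df u_0\|_\infty+\|\Df\psi\|_\infty$ (via Lemma~\ref{lemma:Linfty bound frac heat}) is obtained by running the same contradiction on blow-up sequences at varying base points and invoking Arzel\`a--Ascoli compactness to promote pointwise differentiability to a quantitative modulus.
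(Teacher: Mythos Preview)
Your core strategy matches the paper's: exploit that a semiconvex function with a genuine corner has $-\Df$ equal to $+\infty$ at that point when $s\geq 1/2$, contradicting the $L^\infty$ bound on $\Df u(t)$. However, your write-up contains a sign/direction error that makes the stated contradiction vacuous. With $\phi_\e$ touching $\Psi$ from \emph{below} at $0$, the pointwise (or viscosity) comparison yields $\Df\Psi(0)\leq \Df\phi_\e(0)$, equivalently $-\Df\Psi(0)\geq -\Df\phi_\e(0)$. What you need to contradict is the \emph{lower} bound $\Df\Psi(0)\geq -2M$, not the upper bound $\Df\Psi\leq 2M$; and the conclusion you should draw is $\Df\phi_\e(0)\geq -2M$, not $\Df\phi_\e(0)\leq 2M$. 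As written, ``$\Df\phi_\e(0)\leq 2M$'' is perfectly consistent with $\Df\phi_\e(0)\to -\infty$, so there is no contradiction.

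On the technical obstacle you flag (passing from the distributional $L^\infty$ bound to a pointwise inequality at the bad point), the paper resolves this more cleanly and in a way that unifies the cases $s>1/2$ and $s=1/2$. Rather than invoking viscosity machinery, the paper observes that semiconvexity makes the singular integral
\[
-\Df u(t,x)=\int_{B_1}\frac{u(t,x+h)+u(t,x-h)-2u(t,x)+2C_0|h|^2}{2|h|^{n+2s}}\,dh - C_0C(n,s)+\text{(continuous tail)}
\]
well-defined at \emph{every} point (possibly $+\infty$), and lower semicontinuous in $x$ by Fatou. Lower semicontinuity plus the a.e.\ bound $-\Df u(t)\leq \|\Df u_0\|_\infty+\|\Df\psi\|_\infty$ then forces the bound to hold everywhere. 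The corner function $\varphi_{x_0,p_1,p_2}(x)=\bigl[u(t,x_0)+\max\{p_1\cdot(x-x_0),p_2\cdot(x-x_0)\}-\tfrac{C_0}{2}|x-x_0|^2\bigr]\chi_{B_1(x_0)}$ touches $u(t)$ from below at $x_0$ and has $-\Df\varphi_{x_0,p_1,p_2}(x_0)=+\infty$, giving the contradiction directly---no $\e$-regularization, no symmetrization, no case split. Your barrier $\phi_\e$ and symmetrized $\Psi$ are doing the same job, just with more moving parts.
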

\begin{proof}
First of all, we claim that, for every fixed $t \in [0,T]$, the map $x\mapsto -\Df u(t,x)$ is lower semicontinuous.
Indeed, recall that if $C_0$ denotes a semiconvexity constant for both $u_0$ and $\psi$, then $u(t)$ is $C_0$-semiconvex
for all $t \in [0,T]$ (see Lemma \ref{lemma:basic}(ii)).
Hence $\Df u(t,x)$ is pointwise defined \textit{at every} $x \in \R^n$,  and is given by (see \eqref{eq:equivalent frac lapl})
\begin{align*}
-\Df u (t,x)&=\int_{B_1} \frac{u(t,x+h)+u(t,x-h)-2u(t,x)}{2|h|^{n+2s}}\,dh + \int_{\R^n\setminus B_1}
\frac{u(t,x+h)-u(t,x)}{|h|^{n+2s}}\,dy\\
&=\int_{B_1} \frac{u(t,x+h)+u(t,x-h)-2u(t,x)+2C_0|h|^2}{2|h|^{n+2s}}\,dh\\
&\quad-C_0 C(n,s) +
\int_{\R^n\setminus B_1} \frac{u(t,x+h)-u(t,x)}{|h|^{n+2s}}\,dy
\end{align*}
where $C(n,s):=\int_{B_1}|h|^{2-n-2s}\,dh$.
The last integral in the right hand side is continuous as a function of $x$ (since $u$ is continuous).
Moreover, since the function inside the first integral is continuous in $x$ and non-negative (by the $C_0$-semiconvexity), the first integral
is lower semicontinuous as a function of $x$ by Fatou's lemma. This proves the claim.

Now, we remark that $-\Df u(t,x_0)=+\infty$ whenever $x_0$ is a point such that the subdifferential of $u(t)$ at $x_0$ is not single valued. Indeed, suppose that
$$
u(t,x) \geq \var_{x_0,p_1,p_2}(x):=\Big[u(t,x_0)+ \max\bigl\{p_1\cdot (x-x_0) ,p_2\cdot (x-x_0) \bigr\}-\frac{C_0}{2}|x-x_0|^2\Big]\chi_{B_{1}(x_0)}(x),
$$
for some $p_1\neq p_2$. Then, it is easy to check by a simple explicit computation that
$$
-\Df \var_{x_0,p_1,p_2}(x_0)=+\infty\qquad \forall\,s \geq 1/2.
$$
Hence,
since $u(t)\geq \var_{x_0,p_1,p_2}$ with equality at $x_0$, we get
$$
\Df u (t,x_0)\geq \Df \var_{x_0,p_1,p_2}(x_0)=+\infty.
$$
However, since $-\Df u(t)$ is bounded by $\|\Df u_0\|_{L^\infty(\R^n)}+\|\Df \psi\|_{L^\infty(\R^n)}$ (see \eqref{eq:Linfty bound Df u})
and it is a lower semicontinuous function, the above inequality
is impossible.
Thus the subdifferential of $u(t)$ at $x$ is a singleton
at every point, i.e., $u(t)$ is $C^1$.
Finally, the last part of the statement follows by a simple compactness argument.
\end{proof}

\begin{remark}\label{rmk:C1}{\rm
The spatial $C^1$-regularity proved in the above proposition is optimal for $s=1/2$.
Indeed, consider the case $n=1$ and $\psi\equiv 0$, and
use the interpretation of the $(1/2)$-fractional Laplacian
as the Dirichlet-to-Neumann operator for the harmonic extension, as explained
in Subsection \ref{subsect:prelim} (observe that $L_a=\Delta_{x,y}$ when $s=1/2$).
Then, we look for solutions to the problem
\begin{equation}
\label{eq:Df 12}
\left\{
\begin{array}{ll}
\min\{ u_t -u_y,u\}=0& \text{on }[0,T]\times\R,\\
\Delta_{x,y} u(t)=0&\text{on }[0,T]\times\R\times \R^+.\\
\end{array}
\right.
\end{equation}
Let us try to find traveling waves solutions to the above equation,
i.e., solutions of the form $u(t,x,y)=w(a t+x,y)$, with $a \in \R$.
In this case $u_t=a u_x$, so $w(x,y)$ has to solve
\begin{equation}
\label{eq:traveling}
\left\{
\begin{array}{ll}
a w_x(x,0)-w_y(x,0)=0 & \text{when } \{w(x,0)>0\},\\
\Delta_{x,y} w=0 &\text{on }\R\times \R^+.
\end{array}
\right.
\end{equation}
By using the complex variable $z=x+iy$ it is easy to construct $C^1$
solutions to the above equation:
if we denote $\rho=|z|=\sqrt{x^2+y^2}$ and $\theta=\arg(z)$, then
$w_\b(x,y):=-\rho^{1+\beta} \sin((1+\beta)\theta)=-{\rm Im}(z^{1+\beta})$ is harmonic in the half-space $y>0$
and solves
$$
\left\{
\begin{array}{ll}
w_\b(x,0)=0 & \text{on }\{x\geq 0\},\\
w_\b(x,0)>0,\,\frac{(w_\b)_x}{\tan(\b\pi)}=(w_\b)_y &\text{on }\{x><0\}.
\end{array}
\right.
$$
Observe that $w_\b$ is of class $C^{1+\b}$ both in space and time (but not more), and solves \eqref{eq:traveling} with
$a=1/\tan(\b\pi)$.
Since $\b \in (0,1)$ is arbitrary, we cannot expect to prove any uniform $C_{x}^{1+\a}$-regularity
for solutions to \eqref{eq:Df 12}. Thus, the $C^1_x$-regularity
proved in Proposition \ref{prop:C1x reg}
is optimal.

On the other hand, we observe that the case $u_t\geq0$
(i.e., the contact set shrinks in time) corresponds to $a \leq 0$, or equivalently to
$\beta \geq 1/2$. Hence, in this case the solutions constructed above 
are at least $C_{t,x}^{1+1/2}$, which is the optimal regularity result
for the stationary case \cite{athcaf,cafsalsil}. As we will show in the next section,
solutions to \eqref{eq:frac heat no initial}
satisfying $u_t\geq 0$ are of class $C^{1+1/2}$ in space.
In particular, by Lemma \ref{lemma:basic} this result applies to solutions of \eqref{eq:frac heat}.
}
\end{remark}

\section{Proof of Theorem \ref{thm:main}}
\label{sect:opt reg}

The strategy of the proof is the following:
first in Subsection \ref{sect:general C1a} we prove a general $C^{\a+2s}$-regularity result in space
which, roughly speaking, says the following: let $v:\R^n\to \R$ be a semiconvex function which touches from above
an obstacle $\psi:\R^n\to \R$ of class $C^2$. Assume that $\Df v$ is non-positive outside the contact set
and non-negative on the contact set. Then $v$ detaches from $\psi$ in a  $C^{\a+2s}$ fashion, for some $\alpha=\alpha(s)>0$
universal. In particular, as shown in Corollary \ref{cor:unif C1a v},
this implies that $\Df v\chi_{\{v=\psi\}} \in C_{x}^{\a}(\R^n)$.

Then, in Subsection \ref{subsect:monot} we use a monotonicity formula to prove
the optimal regularity in space
$$
\Df v\chi_{\{v=\psi\}} \in C_{x}^{1-s}(\R^n).
$$
Finally, in Subsection \ref{sect:C1s u} we apply the above estimate to any time slice $u(t)$
to prove that $\Df u(t)\chi_{u(t)=\psi}\in C_{x}^{1-s}(\R^n)$, uniformly in time. Then,
exploiting \eqref{eq:frac heat} and a bootstrap argument, we get \eqref{eq:opt reg}.


\subsection{A general $C^{\a+2s}$-regularity result.}
\label{sect:general C1a}
In order to underline what are the key elements in the proof, in this and in the next subsection
we forget about equation \eqref{eq:frac heat}, and we work in the following general
setting:
let $v,\psi:\R^n \to \R$ be two globally Lipschitz functions with $v \geq \psi$. Assume that\footnote{
The smoothness assumption on $v$ inside the open set $\{v>\psi\}$ (see (A5))
is not essential for the proof of the regularity of $v$ at free boundary points, but it is only used to avoid some minor technical issues.
Anyhow this makes no differences for our purposes, since all the following results will be applied to $v=u(t)$ with $t>0$, and $u$ is smooth inside the open set $\{u>\psi\}$
(see the proof of Lemma \ref{lem:sign frac lapl}).}:
\begin{enumerate}
\item[(A1)] $\|D^2 \psi\|_{L^\infty(\R^n)}=:C_0 <+\infty$;
\item[(A2)] $\|\Df\psi\|_{C_x^{1-s}(\R^n)} <+\infty $;
\item[(A3)] $v-\psi,\Df v\in L^\infty(\R^n)$;
\item[(A4)] $v$ is $C_0$-semiconvex;
\item[(A5)] $v$ is smooth and $\Df v \leq 0$ inside the open set $\{v>\psi\}$;
\item[(A6)] $\|\Df\psi\|_{L^\infty(\R^n)} \geq \Df v \geq 0$ a.e. on $\{v=\psi\}$.
\end{enumerate}
Under these assumptions, we want to show that $v$ is $C^{\a+2s}$ at every free boundary point, with a uniform bound.
More precisely, we want to prove:
\begin{theorem}
\label{thm:unif C1a v} 
Let $v$ be as above. Then there exist $\bar C>0$ and $\a\in (0,1)$, depending on $C_0$, $\|\Df \psi\|_{C_x^{1-s}(\R^n)}$, $\|v-\psi\|_{L^\infty(\R^n)}$, and $\|\Df v\|_{L^\infty(\R^n)}$ only, such that
\begin{equation}
\label{eq:unif C1a v} 
\sup_{B_r(x)}|v-\psi| \leq \bar C\, r^{\a+2s},\quad \sup_{B_r(x)}|\Df v\chi_{\{v=\psi\}}| \leq \bar C\,
r^{\a}\qquad \forall \,r \leq 1
\end{equation}
for every $x \in \p \{u=\psi\}$.
\end{theorem}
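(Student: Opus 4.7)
Plan: I would use a Caffarelli-style iterative improvement of flatness to establish the $C^{\alpha+2s}$ detachment of $w:=v-\psi$ at the free boundary, then translate it into the $C^\alpha$ bound on $\Df v\chi_{\{v=\psi\}}$ via a kernel estimate. First, note that $w\geq 0$ is Lipschitz and $2C_0$-semiconvex, and $\Df w=\Df v-\Df\psi$ is globally bounded by (A2), (A3), (A6). After translation, it suffices to treat the free boundary point $x_0=0$.

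The core of the argument is the claim, proved by induction on $k\in\N$, that there exist universal $M>0$ and $\alpha\in(0,1-s)$ with
\begin{equation*}
\sup_{B_{2^{-k}}} w \,\leq\, M\cdot 2^{-k(\alpha+2s)}\qquad \forall\,k\geq 0.
\end{equation*}
The base case holds for $M\geq\|w\|_{L^\infty}$. For the inductive step I would argue by contradiction and compactness: if the estimate fails at $k=K+1$, set
\begin{equation*}
w_K(y) := \frac{w(2^{-K}y)}{M\cdot 2^{-K(\alpha+2s)}},\qquad y\in\R^n.
\end{equation*}
Then $w_K(0)=0$, $w_K\leq 1$ on $B_1$, $\sup_{B_{1/2}} w_K>2^{-(\alpha+2s)}$, and (from the induction hypothesis) $w_K(y)\leq |y|^{\alpha+2s}$ on $B_{2^K}$. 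Because $\Df w_K(y)=2^{K\alpha}\Df w(2^{-K}y)$, the $C^{1-s}$-regularity of $\Df\psi$ from (A2) combined with the $C^2$-control of $\psi$ from (A1) ensures, as long as $\alpha<1-s$, that the inhomogeneity in the rescaled equation remains uniformly bounded. Extracting a subsequential limit $w_K\to w_\infty$ in $C^0_{\rm loc}(\R^n)$, one obtains a non-negative, Lipschitz, semiconvex function with $w_\infty(0)=0$, polynomial growth $\leq |y|^{\alpha+2s}$, satisfying the clean limiting obstacle system $\Df w_\infty=0$ on $\{w_\infty>0\}$ and $0\leq \Df w_\infty\leq \|\Df\psi\|_{L^\infty}$ on $\{w_\infty=0\}$. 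A Liouville-type argument, using the semiconvexity together with the subquadratic growth, forces $w_\infty\equiv 0$, contradicting $\sup_{B_{1/2}} w_\infty\geq 2^{-(\alpha+2s)}>0$.

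With the pointwise decay of $w$ in hand, I would bound $\Df v\chi_{\{v=\psi\}}$ on $B_r(x_0)$ as follows. For a contact point $y\in\{v=\psi\}\cap B_r(x_0)$ (so $w(y)=0$), decompose $\Df v(y)=\Df\psi(y)+\Df w(y)$. The $\Df\psi$ piece is controlled by the $C^{1-s}$-estimate (A2). For $\Df w(y)$, use the integral representation $\Df w(y)=-\int w(z)|z-y|^{-n-2s}\,dz$ and split according to the distance of $z$ from $x_0$: for $|z-x_0|\leq 2r$ use the near-origin growth $w(z)\leq C|z-x_0|^{\alpha+2s}$; for $|z-x_0|>2r$, subtract the analogous integral at $x_0$ and exploit the kernel modulus of continuity $\bigl||z-y|^{-n-2s}-|z-x_0|^{-n-2s}\bigr|\lesssim r\,|z-x_0|^{-n-2s-1}$ together with the $(\alpha+2s)$-growth (inside a bounded set) and the $L^\infty$-bound on $w$ (at infinity). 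The compatibility $\Df v(x_0)=0$, forced by the upper semicontinuity of $\Df v$ coming from (A4) together with the sign constraints (A5)-(A6) across the free boundary, ties the pieces together and yields the claimed $|\Df v(y)|\leq \bar C r^\alpha$.

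The main obstacle is the blow-up and Liouville step: the rescaled problem must converge to a clean limiting obstacle system, which is precisely where (A1) and (A2) enter and force $\alpha<1-s$; the Liouville classification of globally $(\alpha+2s)$-growing non-negative solutions is the technically delicate point, and determines the value of $\alpha$ produced by the argument.
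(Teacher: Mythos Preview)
Your approach differs fundamentally from the paper's, and the central gap is the Liouville step you yourself flag as delicate. The paper does \emph{not} argue by blow-up and compactness. Following Athanasopoulos--Caffarelli, it passes to the Caffarelli--Silvestre extension and studies $y^a\tilde v_y$ (whose boundary trace is $-\Df v$), which is $L_{-a}$-harmonic. A direct induction establishes $\inf_{\Gamma_{4^{-k}}} y^a\tilde v_y \geq -K_1\mu^k$: at each scale one rescales, builds an explicit $L_a$-harmonic comparison function, applies Hopf's lemma to locate a half-ball where the boundary data is improved, and then uses the Poisson representation for $L_{-a}$ to propagate the gain to the next cylinder. Semiconvexity in $x$ and the ``$a$-semiconcavity'' $\partial_y(y^a\tilde v_y)\leq Cy^a$ are used \emph{constructively} in this barrier step, not merely to extract a limit. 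A separate Harnack argument then converts the decay of $y^a\tilde v_y$ into decay of $|v-\psi|$. Note the order is the reverse of yours: the $\Df v$ bound comes first, the $|v-\psi|$ bound second.

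Your compactness route has two concrete failures. First, the scaling: $\Df w_K(y)=2^{K\alpha}\,\Df w(2^{-K}y)$, so the $L^\infty$ bound on $\Df v$ over the contact set is multiplied by $2^{K\alpha}\to\infty$; the $C_x^{1-s}$ control on $\Df\psi$ rescues the inhomogeneity coming from the obstacle, but not this upper bound, so your asserted limiting condition ``$0\leq\Df w_\infty\leq\|\Df\psi\|_{L^\infty}$ on $\{w_\infty=0\}$'' is unjustified. Second, and fatally, even granting a clean limit, the Liouville theorem you invoke --- that a global, convex, non-negative solution of the zero-obstacle fractional problem with growth strictly below $|y|^{1+s}$ must vanish --- is not a result you can cite; it is at least as deep as the theorem itself. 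The model profiles $(e\cdot y)_+^{1+s}$ are convex and satisfy every hypothesis of your limit except the growth restriction, so distinguishing growth $<1+s$ from growth $=1+s$ requires a genuine rigidity mechanism. The paper supplies such a mechanism only \emph{after} this theorem, via a monotonicity formula, precisely to upgrade $C^{\alpha+2s}$ to the optimal $C^{1+s}$; invoking that level of machinery here is circular. Finally, your claim $\Df v(x_0)=0$ at free boundary points does not follow from upper semicontinuity and the sign conditions (A5)--(A6) alone, so the anchoring of your kernel estimate is also incomplete.
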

Before proving the above result, let us show how it implies 
the following:
\begin{corollary}
\label{cor:unif C1a v}
Let $v$ be as above. Then there exist $\bar C'>0$ and $\a\in (0,1-s]$, depending on $C_0$, $\|v-\psi\|_{L^\infty(\R^n)}$, and $\|\Df v\|_{L^\infty(\R^n)}$ only, such that 
$$
\|\Df v \chi_{\{v=\psi\}}\|_{C_{x}^{\a}(\R^n)}\leq \bar C'.
$$
\end{corollary}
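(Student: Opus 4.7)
The plan is to set $f:=\Df v\,\chi_{\{v=\psi\}}$ and to bound $|f(x_1)-f(x_2)|\leq C|x_1-x_2|^{\alpha}$ on all of $\R^n$ by a case analysis. Writing $K:=\{v=\psi\}$ and $\delta:=|x_1-x_2|$, note that $\|f\|_{L^\infty(\R^n)}\leq\|\Df v\|_{L^\infty(\R^n)}$ handles $\delta\geq 1$, so we may assume $\delta\leq 1$. By possibly replacing the exponent $\alpha$ of Theorem \ref{thm:unif C1a v} with $\min(\alpha,1-s)$, which only weakens that statement, we may further assume $\alpha\leq 1-s$.

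Two configurations are handled directly by Theorem \ref{thm:unif C1a v}. If $x_1,x_2\notin K$, then $f(x_1)=f(x_2)=0$. If $x_1\in K$ while $x_2\notin K$, the segment $[x_1,x_2]$ meets $\partial K$ at some $x_0$ with $|x_0-x_1|\leq\delta$, and the theorem applied at $x_0$ with $r=|x_0-x_1|$ yields $|f(x_1)|=|\Df v(x_1)|\leq\bar C\,r^{\alpha}\leq\bar C\,\delta^{\alpha}$.

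The remaining case is $x_1,x_2\in K$. Let $d_i:=\mathrm{dist}(x_i,\partial K)$ with $d_1\leq d_2$. If $d_1\leq 3\delta$, pick $x_0\in\partial K$ with $|x_0-x_1|=d_1$; then $x_1,x_2\in\overline{B_{4\delta}(x_0)}$ and two applications of Theorem \ref{thm:unif C1a v} give $|f(x_i)|\leq\bar C(4\delta)^{\alpha}$. If instead $d_1>3\delta$, a segment argument shows $B_{d_1}(x_1)\subset K^{\circ}$, so $w:=v-\psi$ vanishes identically on $B_{d_1}(x_1)$. Decompose $\Df v=\Df\psi+\Df w$; the $\Df\psi$ contribution is bounded by $\|\Df\psi\|_{C_x^{1-s}(\R^n)}\,\delta^{1-s}\leq C\,\delta^{\alpha}$ by assumption (A2). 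Since $w(x_1)=w(x_2)=0$,
\[
\Df w(x_2)-\Df w(x_1)=\int_{\R^n} w(y)\Bigl(|y-x_1|^{-(n+2s)}-|y-x_2|^{-(n+2s)}\Bigr)\,dy,
\]
and the integrand is supported where $|y-x_1|\geq d_1\geq 3\delta$. A mean-value estimate bounds the kernel difference by $C\,\delta\,|y-x_1|^{-(n+2s+1)}$ on that set; combined with the growth bound $w(y)\leq\bar C(2|y-x_1|)^{\alpha+2s}$ for $d_1\leq |y-x_1|\leq 1/2$ (given by Theorem \ref{thm:unif C1a v} applied at the nearest free boundary point to $x_1$) and the trivial bound $w\leq\|w\|_{L^\infty(\R^n)}$ further out, a polar-coordinate computation yields $|\Df w(x_1)-\Df w(x_2)|\leq C\bigl(\delta\,d_1^{\alpha-1}+\delta\bigr)\leq C\,\delta^{\alpha}$, where the last inequality uses $d_1\geq 3\delta$ and $\alpha<1$.

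The main technical point is this last estimate: one has to align the kernel-smoothing factor $\delta$ with the vanishing rate $\alpha+2s$ of $w$ near $\partial K$ and verify that the borderline term $\delta\,d_1^{\alpha-1}$ is controlled by $\delta^{\alpha}$ exactly because $d_1\geq 3\delta$ and $\alpha<1$. Everything else is routine case analysis together with the reduction $\alpha\leq 1-s$ fixed at the start.
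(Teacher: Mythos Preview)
Your proof is correct and follows essentially the same approach as the paper's: reduce to $\alpha\le 1-s$ and small $|x_1-x_2|$, split into a ``near-boundary'' case (handled directly by Theorem~\ref{thm:unif C1a v}) and a ``far-from-boundary'' case (handled by writing $\Df v=\Df\psi+\Df(v-\psi)$, using (A2) for the first piece, and a kernel-difference estimate combined with the decay $|v-\psi|\lesssim r^{\alpha+2s}$ for the second). The only cosmetic differences are that the paper splits on $\max_i d_F(x_i)$ versus $4|x_1-x_2|$ (rather than $\min_i d_i$ versus $3\delta$) and omits the explicit treatment of points outside $K$; also, to apply Theorem~\ref{thm:unif C1a v} with radius $4\delta$ you should take $\delta\le 1/4$ rather than $\delta\le 1$ at the outset, as the paper does.
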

\begin{proof}
Without loss of generality, we can assume that the exponent $\a$ provided by Theorem \ref{thm:unif C1a v} is not greater than $1-s$.
Moreover, since $\Df v$ is bounded on $\{v=\psi\}$ (see (A6)), it suffices to control $|\Df v(x_1)-\Df v(x_2)|$ when $x_1,x_2 \in \{v=\psi\}$ and $|x_1-x_2|\leq 1/4.$

Let $M:=\|v-\psi\|_{L^\infty(\R^n)}$.
Moreover, given $x \in \{v=\psi\}$, let $d_F(x)$ denote its distance from the free boundary $\p\{v=\psi\}$.

Fix $x_1,x_2 \in \{v=\psi\}$, with $|x_1-x_2|\leq 1/4$. Two cases arise.\\
$\bullet$ \textit{Case 1:} $\max_{i=1,2}d_F(x_i) \geq 4|x_1-x_2|$.
Set $\tilde v:=v-\psi$. Since $\alpha\leq 1-s$ by assumption, thanks to (A2) it suffices to estimate $\Df \tilde v$ inside $\{\tilde v=0\}=\{v=\psi\}$.
Now, by Theorem \ref{thm:unif C1a v} we have
$$
\sup_{B_r(x_i)}|\tilde v| \leq \bar C\, r^{\a+2s},\qquad \forall\,r \leq 1, \,i=1,2.
$$
Hence, since $\tilde v=0$ inside $B_{4|x_1-x_2|}(x_1)\cap B_{4|x_1-x_2|}(x_2)$ and $|\tilde v|\leq M$ outside $B_1(x_1)\supset B_{1/2}(x_2)$,
we get
\begin{align*}
&|\Df \tilde v(x_1) - \Df \tilde v(x_2)|\\
&= \int_{\R^n} \frac{\tilde v(x') - \tilde v(x_1)}{|x'-x_1|^{n+2s}}\,dx'
- \int_{\R^n}\frac{\tilde v(x') - \tilde v(x_2)}{|x'-x_2|^{n+2s}}\,dx'\\
&\leq \int_{\R^n\setminus [B_{4|x_1-x_2|}(x_1)\cap B_{4|x_1-x_2|}(x_2)]}
|\tilde v(x')|\,\biggl|\frac{1}{|x'-x_1|^{n+2s}}-\frac{1}{|x'-x_1|^{n+2s}}\biggr|\,dx'\\
&\leq \bar C\,|x_1-x_2|\int_{B_1(x_1)\setminus [B_{4|x_1-x_2|}(x_1)\cap B_{4|x_1-x_2|}(x_2)]} |x'|^{\a+2s}
\biggl[\frac{1}{|x'-x_1|^{n+2s+1}}+\frac{1}{|x'-x_2|^{n+2s+1}}\biggr]\,dx'\\
&\qquad+ 2M\,|x_1-x_2|\int_{\R^n\setminus B_1(x_1)}
\biggl[\frac{1}{|x'-x_1|^{n+2s+1}}+\frac{1}{|x'-x_2|^{n+2s+1}}\biggr]\,dx'\\
&\leq C\,\biggl[\bar C\int_{|x_1-x_2|}^1 s^{\a-2}\,ds +M\biggr]\,|x_1-x_2|  \leq C\,|x_1-x_2|^\a,
\end{align*}
as desired.\\
$\bullet$ \textit{Case 2:} $\max_{i=1,2}d_F(x_i) \leq 4|x_1-x_2|$.
For every $i=1,2$, let $\bar x_i \in \p \{v=\psi\}$ denote a point such that $|x_i-\bar x_i|=d_F(x_i)$.
Then, by Theorem \ref{thm:unif C1a v} we get
\begin{align*}
|\Df v(x_1)-\Df v(x_2)| 
& \leq \sup_{B_{4|x_1-x_2|}(\bar x_1)}|\Df v|+\sup_{B_{4|x_1-x_2|}(\bar x_2)}|\Df v|\\
&\leq 8\bar C\,
|x_1-x_2|^{\a}.
\end{align*}
\end{proof}

\subsubsection{Proof of Theorem \ref{thm:unif C1a v}}
The strategy of the proof is analogous to the one used in \cite{athcaf} to study
the stationary fractional obstacle problem with $s=1/2$ (also called ``Signorini problem'').

With no loss of generality, we can assume that $0$ is a free boundary point,
and we prove \eqref{eq:unif C1a v} at $x=0$.
Moreover, by a slight abuse of notation, let us still denote by $v:\R^n\times \R^+ \to \R$ the $L_a$-harmonic extension of $v$,
i.e.,
$$
L_a v(x,y)=\div_{x,y}\bigl(y^a \nabla_{x,y} v(x,y)\bigr)=0\qquad \text{for $y>0$},
$$
and $v(x,0)=v(x)$, with $v(x)$ as above. Then
$$
\lim_{y\to 0^+}y^a v_y(x,y)=-\Df v(x,0),\qquad a=1-2s
$$
(see \eqref{eq:equivalent frac lapl}).
Let us observe that the $C_0$-semiconvexity of $v(x,0)$ (see (A4)) propagates in $y$: since
$$
v(x+h,0)+v(x-h,0)-2v(x,0)\geq -2C_0|h|^2 \qquad\forall\,h \in \R^n,
$$
the maximum principle implies
$$
v(x+h,y)+v(x-h,y)-2v(x,y)\geq -2C_0|h|^2\qquad\forall\,h \in \R^n,\,y>0,
$$
that is,
\begin{enumerate}
\item[(A7)] $v(\cdot,y)$ is $C_0$-semiconvex for all $y \geq 0$.
\end{enumerate}
In particular, since $L_a v=0$ we get
\begin{enumerate}
\item[(A8)] $\p_y(y^a v_y)\leq nC_0 y^a.$
\end{enumerate}
In the sequel, we will informally call the above property ``$a$-semiconcavity'' in $y$\footnote{
Even if we use the names ``$a$-semiconcavity'' and ``$C_0$-semiconvexity''
with different meanings, this should create no confusion.
Observe also that, when $a=0$, (A8) reduces to the classical notion of semiconcavity.}.
Set now
$$
\tilde v(x,y):=v(x,y)-\psi(x),
$$
and denote by $\L:=\{\tilde v(x,0)=0\}=\{v(x,0)=\psi(x)\}$ the contact set.
Observe that $ \tilde v_y=v_y$, which together with 
thanks to (A1)-(A6) gives that the function $\tilde v$ enjoys the following properties:
\begin{enumerate}
\item[(B1)] $\tilde v(x,0)\geq 0$
for all $(x,y)\in \R^n\times \R^+\setminus \L\times\{0\}$.
\item[(B2)] $\p_y(y^a\tilde v_y(x,y))\leq 2nC_0y^a$,  $\tilde v(\cdot,y)$ is $(2C_0)$-semiconvex for all $y \geq 0$.
\item[(B3)] $\lim_{y \to 0^+} y^a\tilde v_y(x,y) \leq 0$ for a.e. $x \in \L$,
$\lim_{y \to 0^+} y^a\tilde v_y(x,y)\geq  0$ for a.e. $x \in \R^n\setminus \L$.
\item[(B4)] $\tilde v(x,y)-\tilde v(x,0)\leq \frac{nC_0}{1+a}\,y^{2}$ for all $x \in \L$.
\item[(B5)] if $\tilde v(x,y)\geq h$, then $\tilde v(x,y) \geq h-C_0\rho^2$ in the half-ball
$$
HB_\rho(x):=\{z \in B_\rho(x)\subset \R^n\,:\,\<\n_x \tilde v(x,y),z-x\>\geq 0\}.
$$
\end{enumerate}
Observe that the proof of (B1)-(B5) is almost immediate, except for (B4) for which a (simple) computation is needed:
using (B2) and (B3), for a.e. $x \in \L$ we have
$$
\tilde v(x,y)-\tilde v(x,0)=\int_0^y \tilde v_y(x,s)\,ds =\int_0^y \frac{s^a\tilde v_y(x,s)}{s^a}\,ds \leq \int_0^y \frac{\int_0^s 2nC_0 \tau^a\,d\tau}{s^a}\,ds=\frac{nC_0}{1+a}\,y^2,
$$
and by continuity the above inequality holds for all $x \in \L$.

We use the notation $\G_r:=B_r\times [0,\eta_{n,a}r]$, where $\eta_{n,a}=\sqrt{\frac{1+a}{2n}}$.
We first show a decay result for $y^{a}\tilde v_y$:
\begin{proposition}
\label{prop: C1a v}
There exist two constants $K_1>0$, $\mu \in (0,1)$, depending on $C_0$, $\|v-\psi\|_{L^\infty(\R^n)}$, and $\|\Df v\|_{L^\infty(\R^n)}$ only, such that 
\begin{equation}
\label{eq:induction}
\inf_{\G_{4^{-k}}} y^a\tilde v_y \geq -K_1\,\mu^k,
\end{equation}
\end{proposition}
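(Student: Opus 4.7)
The plan is to prove the geometric decay of the ``normal derivative'' $w := y^a \tilde v_y$ by induction on $k$, exploiting the fact that $w$ itself satisfies a degenerate elliptic equation in the upper half-space. A short computation from $L_a v = 0$ together with $\p_y\psi = 0$ shows that $L_{-a} w = 0$ in $\R^n\times\R^+$ (differentiate $\p_y(y^a v_y) = -y^a\Delta_x v$ once more in $y$). By the Caffarelli--Silvestre formula the boundary trace $w(\cdot,0) = -\Df v$ is $\leq 0$ a.e.\ on $\L$ and $\geq 0$ a.e.\ off $\L$ (by (B3)), and is bounded by $\|\Df v\|_{L^\infty}$. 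A standard Poisson-kernel estimate, combined with the global Lipschitz bound on $v$, gives $\|w\|_{L^\infty(\G_1)} \leq K_0$ for some $K_0$ depending only on the data listed in the proposition; choosing $K_1 := K_0$ handles the base case $k=0$.

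For the inductive step I rescale to unit size. Assuming the claim at scale $r_k := 4^{-k}$, set $w_k(x,y) := w(r_k x, r_k y)/(K_1\mu^k)$ together with the matching rescaling $\tilde v_k := r_k^{-2s}\tilde v(r_k\cdot)/(K_1\mu^k)$, which preserves both $L_{\pm a}$-harmonicity and the identity $y^a(\tilde v_k)_y = w_k$. Then $w_k$ is $L_{-a}$-harmonic with $w_k \geq -1$ on $\G_1$, and $\tilde v_k$ obeys the analogues of (B1)--(B5) in which the quadratic constants in (B2), (B4), (B5) pick up a factor $r_k^{2(1-s)}/\mu^k$. Provided $\mu \leq 4^{-2(1-s)}$ these constants stay bounded (and in fact shrink to $0$ with $k$), so the inductive step reduces to the self-contained statement that for some universal $\mu<1$ every such $w_k$ satisfies $w_k \geq -\mu$ on $\G_{1/4}$.

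I would establish this improvement by contradiction. If it failed along a sequence $k_j\to\infty$, one would extract (by standard compactness for $L_{\pm a}$-harmonic functions together with the Lipschitz and semiconvexity control) a blow-up limit $(\tilde v_\infty, w_\infty)$ such that: $w_\infty$ is globally $L_{-a}$-harmonic, with $w_\infty \geq -1$ on $\G_1$ and $\inf_{\G_{1/4}} w_\infty = -1$; $\tilde v_\infty\geq 0$ is harmonic off its zero set $\L_\infty$, with $0\in\p\L_\infty$; and $w_\infty(\cdot,0)\geq 0$ a.e.\ outside $\L_\infty$. Because the quadratic error in (B5) vanishes under the rescaling, the non-degeneracy (B5) becomes rigid in the limit, forcing a half-ball's worth of detached boundary points to accumulate at every point of $\p\L_\infty$, and in particular at the origin. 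A boundary Hopf / strong maximum principle argument for $L_{-a}$-harmonic functions then propagates the pointwise information $w_\infty\geq 0$ available on this detached set into the strict interior lower bound $w_\infty > -1$ throughout $\G_{1/4}$, contradicting $\inf_{\G_{1/4}} w_\infty = -1$.

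The main obstacle is this blow-up step. One has to verify that the origin survives as a genuine free-boundary point of the limit $\tilde v_\infty$ and that a uniformly quantitative amount of detached boundary data accumulates there; both rely on the scale invariance of (B5) combined with the uniform semiconvexity (B2), which together prevent the rescaled contact sets from either collapsing or engulfing a neighborhood of $0$. Once these ingredients are in place, $\mu$ is fixed small enough that both the rescaled quadratic constants remain bounded and the universal gain produced by the Hopf-type estimate strictly exceeds $1-\mu$, which closes the induction and determines both $\mu$ and $K_1$.
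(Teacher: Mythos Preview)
Your inductive setup and rescaling are correct, and you have identified the right object: $w=y^a\tilde v_y$ is $L_{-a}$-harmonic, and the goal is an improvement-of-infimum for $w$ on successive cylinders. The gap is in the blow-up step. Property (B5) is \emph{not} a non-degeneracy statement; it is simply the semiconvexity of $\tilde v(\cdot,y)$, asserting that wherever $\tilde v$ is large it stays nearly as large on a half-ball. It furnishes no lower bound on $\tilde v$ anywhere, so it cannot prevent the rescaled contact sets from exhausting $B_1$ in the limit. Indeed, the configuration $\tilde v_\infty(x,y)=-y^{2s}/(2s)$, $w_\infty\equiv -1$, $\L_\infty=\R^n$ is consistent with all of the limiting versions of (B1)--(B5) (the quadratic constants having vanished), is $L_a$- and $L_{-a}$-harmonic respectively, and has $\inf_{\G_{1/4}}w_\infty=-1$; no contradiction ensues. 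The fact that $0\in\p\L$ at every finite scale does not force $0\in\p\L_\infty$: the detached set in $B_{r_k}$ can have measure $o(r_k^n)$ and disappear under the blow-up. (A minor point: your constraint on $\mu$ is reversed; one needs $\mu\geq 4^{-2(1-s)}$, not $\leq$, for the rescaled semiconvexity constants $r_k^{2(1-s)}/\mu^k$ to stay bounded.)

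This is precisely why the paper proceeds by a direct barrier rather than compactness. At each scale it introduces $\bar W:=\bar V-\frac{L}{K_1(4^{2(1-s)}\mu)^{k_0}}\bigl[|x|^2-\tfrac{n}{1+a}y^2\bigr]$ with $L\gg C_0$, which is $L_a$-harmonic, negative on $(\L\setminus\{0\})\times\{0\}$, and has nonnegative conormal data off $\L$; Hopf's lemma then forces a nonnegative maximum on $\p\G_{1/8}\setminus\{y=0\}$. The quadratic subtraction is exactly the quantitative non-degeneracy your argument lacks: it manufactures a point where $\bar V$ is controlled from below, after which (B5), the $a$-semiconcavity in $y$, and the Poisson representation for $y^a\bar V_y$ give a half-ball of good boundary data and hence the uniform gain $y^a\bar V_y\geq -\mu$ on $\G_{1/4}$. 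If you want to salvage the compactness route, you must insert an equivalent non-degeneracy lemma \emph{before} passing to the limit.
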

\begin{proof}
We prove the result by induction. \\
\textit{Case $k=1$:} since $v$ is $L_{a}$-harmonic, $y^a\tilde v_y=y^a v_y$ solves the ``conjugate'' equation $L_{-a}(y^a \tilde v_y)=0$ inside $\R^n\times \R^+$
(see for instance \cite[Subsection 2.3]{cafsil}). Hence, the boundedness of $\lim_{y\to 0^+}y^a\tilde v_y(x,y)=\Df v(x,0)$
(see (A6) and \eqref{eq:equivalent frac lapl}) combined with the maximum principle implies the result.\\
\textit{Induction step:} Assume the result is true for $k= k_0$, i.e.,
$$
\inf_{\G_{4^{-k_0}}} y^a\tilde v_y \geq -K_1\,\mu^{k_0}
$$
for some constants $K_1>0$ and $\mu\in(0,1)$ which will be chosen later,
and renormalize the solution inside $\G_1$ by setting 
$$
\tilde V(x,y):=\frac{1}{K_1} \left(\frac{4^{2s}}{\mu}\right)^{k_0} \tilde v\left(\frac{x}{4^{k_0}},\frac{y}{4^{k_0}}\right).
$$
It will also useful to consider the $L_a$-harmonic function
$$
\bar V(x,y):=\frac{1}{K_1} \left(\frac{4^{2s}}{\mu}\right)^{k_0} \bar v\left(\frac{x}{4^{k_0}},\frac{y}{4^{k_0}}\right),
$$
where $\bar v$ is the $L_a$-harmonic function given by
\begin{equation}
\label{eq:bar v}
\bar v(x,y):=v(x,y)-\psi(0)-\n \psi(0)\cdot x.
\end{equation}
Then, thanks to (A1) and (B2):
\begin{enumerate}
\item[(i)] $|\tilde V(x,y) - \bar V(x,y)|\leq \frac{C_0}{K_1(4^{2(1-s)}\mu)^{k_0}}|x|^2$ and $\tilde V_y=\bar V_y$;
\item[(ii)] $\inf_{\G_1} y^a \tilde V_y=\inf_{\G_1} y^a \bar V_y\geq -1$;
\item[(iii)] $\p_y(y^a\tilde V_{y})=\p_y(y^a\bar V_{y})\leq \frac{2nC_0}{K_1(4^{2(1-s)}\mu)^{k_0}}y^a$,
$\tilde V$ and $\bar V$ are $\left(\frac{2C_0}{K_1(4^{2(1-s)}\mu)^{k_0}}\right)$-semiconvex inside $\G_1$.
\end{enumerate}
Fix $L:=\bar C_{n,a} C_0$, where $\bar C_{n,a}\gg 1$ is a large constant depending on $n$ and $a$ only
(to be fixed later), and define
$$
\bar W(x,y):=\bar V(x,y) - \frac{L}{K_1(4^{2(1-s)}\mu)^{k_0}}\biggl[|x|^2 - \frac{n}{1+a}\,y^2 \biggr].
$$
Thanks to (B1)-(B3), the function $\bar W$ satisfies the following properties:
\begin{enumerate}
\item[1.] it is $L_a$-harmonic in the interior of $\G_{1/8}$;
\item[2.] $\bar W(x,0)< 0$ for $x\in \bigl(\L\setminus \{0\}\bigr)\times\{0\}$;
\item[3.] $\lim_{(x,y)\to (0,0)} \bar W(x,y)=0$;
\item[4.] $\lim_{y\to 0^+}y^a\bar W_y(x,y)\geq 0$ for $x \not\in B_{1/8}\setminus \L$.
\end{enumerate}
Hence, up to replacing $\bar W$ by $\bar W+\e y^{1-a}$ with $\e>0$
(so that the inequality in 4. becomes strict) and then letting $\e\to 0$, by Hopf's Lemma $\bar W$ attains its non-negative maximum on
$\p \G_{1/8} \setminus \{y=0\}$.\\
Two cases arise:\\
\textit{Case 1:} \textit{The maximum is attained on $\p \G_{1/8}\cap \{y=\eta_{n,a}/8\}$}.\\
In this case, there exists $x_0 \in B_{1/8}$ such that
$$
\bar V\left(x_0,\textstyle{\frac{\eta_{n,a}}{8}}\right) \geq -c'_{n,a}\frac{L}{K_1(4^{2(1-s)}\mu)^{k_0}},
$$
for some constant $c_{n,a}'>0$ depending on $n,a$ only.
Thanks to the semiconvexity in $x$ (see property (iii) above) and recalling that $L \gg C_0$ by assumption,
there exists an $n$-dimensional half-ball $HB_{1/2}\left(x_0,\textstyle{\frac{\eta_{n,a}}{8}}\right)$ 
such that
$$
\bar V\left(x,\textstyle{\frac{\eta_{n,a}}{8}}\right) \geq -\frac{c_{n,a}'}2\frac{L}{K_1(4^{2(1-s)}\mu)^{k_0}}\qquad
\forall\, x \in HB_{1/2}\left(x_0,\textstyle{\frac{\eta_{n,a}}{8}}\right)
$$
(see property (B5)).
Recall now that $\lim_{y\to 0^+}y^a\bar V_y(x,y)=\lim_{y\to 0^+}y^a\tilde V_y(x,y)\geq 0$ when $\tilde V(x,0)>0$,
while $\lim_{y\to 0^+}y^a\bar V_y(x,y)=\lim_{y\to 0^+}y^a\tilde V_y(x,y)\leq 0$ when $\tilde V(x,0)=0$.
Hence, by the ``$a$-semiconcavity'' of $\bar V$ in $y$ (property (iii) above) and by (i), it is easy to see that 
$$
\lim_{y\to 0^+}y^a\bar V_y\left(x,y\right) \geq -C_{n,a}''\frac{L}{K_1(4^{2(1-s)}\mu)^{k_0}}\qquad
\forall\, x \in HB_{1/2}\left(x_0,0\right),
$$
for some universal constant $C_{n,a}''>0$.\\
\textit{Case 2:} \textit{The maximum is attained on $\p \G_{1/8}\setminus \{y=\eta_{n,a}/8\}$}.\\
Let $(x_0',y_0')$ be a maximum point. Since such a point belongs to the lateral side of the
cylinder, recalling the definition of $\eta_{n,a}$ we have
$|x_0'|^2\ge \frac{2n}{1+a}|y_0'|^2$,
which implies
$$
\bar V(x_0',y_0')\geq \frac{L}{K_1(4^{2(1-s)}\mu)^{k_0}}.
$$
Again, we recall that
$\lim_{y\to 0^+}y^a\bar V_y(x,y)=\lim_{y\to 0^+}y^a\tilde V_y(x,y) \geq 0$ when $\tilde V(x,0)>0$,
while $\lim_{y\to 0^+}y^a\bar V_y(x,y)=\lim_{y\to 0^+}y^a\tilde V_y(x,y)\leq 0$ when $\tilde V(x,0)=0$. Thus,
by the half-ball estimate (B5) applied to $\bar V$, by the ``$a$-semiconcavity'' of $\bar V$ in $y$ (property (iii)) and by (i), we obtain
$$
\lim_{y\to 0^+}y^a\bar V_y\left(x,y\right) \geq 0 \qquad \forall\, x \in HB_{1/2}\left(x_0',0\right).
$$

Hence, in both case we have reached the following conclusion:\\
\textit{There exist a constant $C_1>0$, depending on $n,a$, and $C_0$ only, and a point $\bar x \in B_{1/8}\subset \R^n$, such that
$$
\lim_{y\to 0^+}y^a\bar V_y(x,0) >-\frac{C_1}{K_1(4^{2(1-s)}\mu)^{k_0}} \qquad \forall\,x \in HB_{1/2}\left(\bar x,0\right).
$$}
Thus, if we choose $K_1$ and $\mu$ satisfying $K_1>2C_1$ and $\mu \geq 1/4^{2(1-s)}$, then we obtain
\begin{equation}
\label{eq:wy 12}
\lim_{y\to 0^+} y^a\bar V_y(x,y) >-\frac{1}{2}.
\end{equation}
Moreover, thanks to (ii),
\begin{equation}
\label{eq:wy 1eps}
y^a \bar V_y \geq -1\qquad \text{in }\Gamma_1.
\end{equation}
As we already observed before, the fact that $\bar V$ is $L_{a}$-harmonic implies that
$y^a\bar V_y$ solves the conjugate equation $L_{-a}(y^a \bar V_y)=0$ inside $\R^n\times \R^+$. Hence, thanks to \eqref{eq:wy 12} and \eqref{eq:wy 1eps}, the Poisson representation
formula (see \cite[Subsection 2.4]{cafsil}) implies the existence of a constant $\theta<1$ such that
$$
y^a\bar V_y\left(x,\textstyle{\frac{\eta_{n,a}}{4}}\right) \geq -\theta \qquad \forall\, x\in B_{1/4}.
$$
Therefore, by (i) and the ``$a$-semiconcavity'' of $\bar v$ in $y$ (property (iii)), we obtain
$$
y^a\tilde V_y(x,y)=y^a\bar V_y(x,y) \geq -\theta - \frac{2nC_0}{K_1(4^{2(1-s)}\mu)^{k_0}}=:-\mu>-1,
$$
provided $K_1$ is sufficiently large.
Rescaling back, this proves \eqref{eq:induction} with $k=k_0+1$, which concludes the proof.
\end{proof}

Recalling that $y^a \tilde v_y=y^a v_y$,
thanks to \eqref{eq:equivalent frac lapl}
 and (A6) the above proposition implies
$$ 
\sup_{B_r(x)}|\Df v\chi_{\{v=\psi\}}| \leq \bar C\,
r^{\a}\qquad \forall \,r \leq 1.
$$
We now show that a control from below on $y^a\tilde v_y$
inside $\G_r$ gives
a control from both sides on $\tilde v$ inside $\G_{r/8}$.
This
will conclude the proof of Theorem \ref{thm:unif C1a v}.

\begin{lemma}
\label{lemma:deriv to fct}
Fix $K>0$, $\a\in (0,1)$, and assume that 
\begin{equation}
\label{eq:bound below vy}
\inf_{\G_r}y^a\tilde v_y \geq -Kr^\alpha
\end{equation}
for some $r \in (0,1]$.
Then there exists a constant $M=M(K,\alpha,C_0)$, independent of $r$, such that
$$
\sup_{\G_{r/8}}|\tilde v| \leq M r^{1+\alpha-a}=M r^{\alpha+2s}.
$$
\end{lemma}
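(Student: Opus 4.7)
The plan is to treat the two halves of the bound separately: the lower bound follows by directly integrating the hypothesis on $\tilde v_y$, while the upper bound requires passing to the $L_a$-harmonic extension of $v$ and invoking a Harnack-type estimate. Throughout, I would work with the auxiliary function $\bar v(x,y):=v(x,y)-\psi(0)-\nabla\psi(0)\cdot x$, which is $L_a$-harmonic in the upper half-space, satisfies $\bar v_y=\tilde v_y$, and by (A1) differs from $\tilde v$ by the quadratic Taylor remainder $\psi(x)-\psi(0)-\nabla\psi(0)\cdot x$, whose size on $B_r$ is at most $\frac{C_0}{2}|x|^2\leq Cr^2$.

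For the lower bound, I would integrate the hypothesis $\tilde v_y(x,s)\geq -Kr^\alpha s^{-a}$ from $s=0$ to $s=y$; combined with $\tilde v(x,0)\geq 0$ (property (B1)), this yields $\tilde v(x,y)\geq -\frac{Kr^\alpha}{1-a}y^{1-a}$. Inside $\Gamma_{r/8}$ one has $y^{1-a}\leq Cr^{1-a}=Cr^{2s}$, giving $\tilde v\geq -Mr^{\alpha+2s}$.

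For the upper bound, introduce the $L_a$-harmonic auxiliary function $\hat v(x,y):=\bar v(x,y)+\frac{Kr^\alpha}{1-a}y^{1-a}$. The hypothesis rewrites as $y^a\hat v_y\geq 0$ on $\Gamma_r$, so $\hat v$ is non-decreasing in $y$; combined with $\hat v(x,0)=\bar v(x,0)\geq -Cr^2$ (which follows from $v(\cdot,0)\geq\psi$ and the Taylor remainder of $\psi$), this shows that $\hat v+Cr^2$ is non-negative and $L_a$-harmonic on $\Gamma_r$. Since $0$ is a free-boundary point, property (B4) yields $\tilde v(0,\eta_{n,a}r/8)\leq Cr^2$, whence $(\hat v+Cr^2)(0,\eta_{n,a}r/8)\leq C'r^{\alpha+2s}$ (using $r^2\leq r^{\alpha+2s}$, valid in the regime $\alpha+2s\leq 2$ in which the lemma is applied). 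Because $L_a$ is uniformly elliptic away from $\{y=0\}$ with scale-invariant constants, a Harnack-chain argument applied to the non-negative $L_a$-harmonic function $\hat v+Cr^2$ then yields $\sup_{B_{r/8}}(\hat v+Cr^2)(\cdot,\eta_{n,a}r/8)\leq C''r^{\alpha+2s}$. Unwinding the definition of $\hat v$ gives $\tilde v(\cdot,\eta_{n,a}r/8)\leq C'''r^{\alpha+2s}$ on $B_{r/8}$, and reintegrating $\tilde v_y\geq -Kr^\alpha s^{-a}$ upward from any $(x,y)\in\Gamma_{r/8}$ to $(x,\eta_{n,a}r/8)$ propagates the bound to all of $\Gamma_{r/8}$.

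The main obstacle is that $\hat v$, although $L_a$-harmonic, is not manifestly non-negative, and Harnack cannot be applied to it directly. The key trick is to use both the obstacle condition $v(\cdot,0)\geq\psi$ and the $C^2$-regularity of $\psi$ to absorb the sign defect into the additive constant $Cr^2$; this absorption is harmless precisely because $r^2\leq r^{\alpha+2s}$ in the regime $\alpha+2s\leq 2$ where the lemma is relevant.
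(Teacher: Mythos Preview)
Your proposal is correct and follows essentially the same route as the paper: the lower bound by integrating the hypothesis from $y=0$, and the upper bound by combining the Harnack inequality for the non-negative $L_a$-harmonic function $\bar v+O(r^{\alpha+2s})$ with the control $\tilde v(0,y)\leq Cr^2$ coming from (B4) at the free-boundary point. The only cosmetic difference is that the paper runs the upper-bound step by contradiction (assume a large value at some $(\bar x,\bar y)\in\Gamma_{r/8}$, propagate it up to the slice $y=\eta_{n,a}r/2$, use Harnack to transfer it to $(0,\eta_{n,a}r/2)$, and contradict (B4)), whereas you argue directly from the smallness at $(0,\eta_{n,a}r/8)$ via a Harnack chain; the ingredients and constants involved are the same.
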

\begin{proof}
Since $\tilde v$ is globally bounded, it suffices to prove the result for $r$ small.
First of all, let us observe that, thanks to (B1) and \eqref{eq:bound below vy},
\begin{equation}
\label{eq:bound below v}
\tilde v(x,y) \geq \tilde v(x,0) - K\int_0^y \frac{r^\a}{u^{a}}\,du \geq  -K\,\frac{r^{1+\a-a}}{1-a}\qquad
\forall \,(x,y)\in \G_r,
\end{equation}
which proves the lower bound on $\tilde v$. 

To prove the upper-bound, assume that there exists a point $(\bar x,\bar y)\in \G_{r/8}$ such that
$\tilde v(\bar x,\bar y)\geq Mr^{1+\a-a}$ for some large constant $M$.
Arguing as above, this implies
\begin{equation}
\label{eq:bound below big v}
\tilde v\left(\bar x,\frac{\eta_{n,a}r}{2}\right)\geq \frac{M}4 \,r^{1+\a-a},
\end{equation}
provided $M$ is sufficiently large (depending only on $K$).
Now, let $B':=B_{\frac{\eta_{n,a}r}{2}}'\left(\bar x,\frac{\eta_{n,a}r}{2}\right)$ denote the $(n+1)$-dimensional ball
of radius $\frac{\eta_{n,a}r}{2}$
centered at $\left(\bar x,\frac{\eta_{n,a}r}{2}\right)\in \R^n\times\R^+$, and set
$B'/2:=B_{\eta_{n,a}r/4}'\left(\bar x,\frac{\eta_{n,a}r}{2}\right)$.
Then $B'\subset \G_r$ and $\left(0,\frac{\eta_{n,a}r}{2}\right)\in B'/2$.

Let $\bar v$ be as in \eqref{eq:bar v}.
Thanks to (A1), $|\bar v-\tilde v|\leq C_0\,r^{2}$ inside $\G_r$ (observe that $\eta_{n,a} \leq 1$),
which together with \eqref{eq:bound below v}
implies that $w+K\frac{r^{1+\a-a}}{1-a}+C_0r^{2}$ is non-negative inside $\G_r$.
Hence we can apply Harnack inequality inside $B'$ (see \cite[Proposition 2.2]{cafsalsil} and \cite{fabkenser}) to obtain
$$
\frac{M}8 \,r^{1+\a-a} \leq \sup_{B'/2} \biggl[\bar v+K\frac{r^{1+\a-a}}{1-a}+C_0r^{2}\biggr] \leq C \biggl[\bar v\left(0,\frac{\eta_{n,a}r}{2}\right)+K\frac{r^{1+\a-a}}{1-a}+C_0r^{2}\biggr],
$$
that is $\bar v(0,\eta_{n,a}r/2)\geq c_0Mr^{1+\a-a}-K\frac{r^{1+\a-a}}{1-a}-C_0r^{2}$ for some universal constant $c_0>0$,
which gives
$$
\tilde v\left(0,\frac{\eta_{n,a}r}{2}\right)\geq c_0M\,r^{1+\a-a} -K\frac{r^{1+\a-a}}{1-a}-2C_0r^{2}.
$$
Since $0\in \L$, combining the above estimate with property (B4) we get
$$
0=\tilde v(0,0)\geq \tilde v\left(0,\frac{\eta_{n,a}r}{2}\right) - \frac{nC_0}{1+a}r^{2}\geq
c_0M\,r^{1+\a-a}-K\frac{r^{1+\a-a}}{1-a} - 
\left[\frac{nC_0}{1+a}+2C_0\right]r^{2},
$$
which shows that $M$ is universally bounded, as desired.
\end{proof}

\subsection{Towards optimal regularity: a monotonicity formula}
\label{subsect:monot}
We use the same notation as in the previous subsection.

We have proved that $\Df v\chi_{\{v=\psi\}}$ grows at most as $r^\a$ near any free boundary point,
which implies that  $\Df v \chi_{\{v=\psi\}}\in C_{x}^{\a}(\R^n)$ (see Corollary \ref{cor:unif C1a v}).
Consider now the function $w:\R^n\times \R^+\to \R$ obtained by solving the Dirichlet problem
\begin{equation}
\label{eq:def w}
\left\{
\begin{array}{ll}
L_{-a}w=0 &\text{on }\R^n\times \R^+,\\
w(x,0)=\Df v(x) \chi_{\{v=\psi\}}(x)&\text{on }\R^n.
\end{array}
\right.
\end{equation}
Since $w(x,0)\geq 0$, the maximum principle implies $w \geq 0$ everywhere.

Assume that $0\in \R^n$ is a free boundary point.
Since $\Df v(x)$ is globally bounded (see (A3)), using the Poisson representation formula for $w$ \cite[Subsection 2.4]{cafsil} together with
the uniform $C_{x}^{\a}$-regularity of $w(x,0)$ (Corollary \ref{cor:unif C1a v}) we get
$$
\sup_{|x|^2+y^2\leq r^2} w(x,y) \leq C \,r^{\a},
$$
for some uniform constant $C$.
The goal of this subsection is to show that
\begin{equation}
\label{eq:C1s}
\sup_{|x|^2+y^2\leq r^2} w(x,y) \leq \tilde C \,r^{1-s},
\end{equation}
for some constant $\tilde C>0$, depending on $C_0$, $\|\Df \psi\|_{C_x^{1-s}(\R^n)}$, $\|v-\psi\|_{L^\infty(\R^n)}$, and $\|\Df v\|_{L^\infty(\R^n)}$ only.

This estimate will imply that $\Df v$  grows at most as $|x|^{1-s}$ at every free boundary point, so that
the same proof as in Corollary \ref{cor:unif C1a v} will give that $\Df v \chi_{\{v=\psi\}}\in C_{x}^{1-s}(\R^n)$, with a uniform bound.
Then, in the next subsection we will apply this estimate to $v=u(t)$ for every $t \in (0,T]$,
and using \eqref{eq:frac heat} we will obtain the desired regularity result for $u$.\\

As in the previous subsection, we consider the function $\tilde v(x,y)=v(x,y) - \psi(x)$.
Thanks to Theorem \ref{thm:unif C1a v} together with the $(2C_0)$-semiconvexity of $\tilde v$ (see (B2) in the previous subsection),
we can mimic the proof of \cite[Lemma 5]{athcaf}:
\begin{lemma}
\label{lem:apply monot}
Let $\bar C>0$ and $\a \in (0,1-s]$ be as in Theorem \ref{thm:unif C1a v}, and set $\delta_\a=\delta_\a(s):=\frac{1}{4}\left(\frac{\a}{\a+2s} -\frac{\a}{2}\right)$.
Then there exists $r_0=r_0(\a,s,\bar C,C_0)>0$
such that the convex hull of the set $\{x \in \R^n\,:\,w(x,0)\geq r^{\a+\delta_\a}\}$ in $B_r\subset  \R^n$
does not contain the origin
for $r \leq r_0$.
\end{lemma}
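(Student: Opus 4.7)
My plan is to argue by contradiction, adapting the approach of \cite[Lemma 5]{athcaf}. Suppose that for some $r\leq r_0$ the origin belongs to the convex hull of $A_r:=\{x\in B_r:w(x,0)\geq r^{\alpha+\delta_\alpha}\}$. By Carathéodory's theorem, I can pick $x_1,\ldots,x_m\in A_r$ (with $m\leq n+1$) and positive weights $\lambda_i$ summing to $1$ such that $\sum_i\lambda_ix_i=0$. Each $x_i$ is necessarily a contact point (since $w(\cdot,0)$ vanishes off $\{v=\psi\}$), so $\tilde v(x_i)=0$, and the pointwise integral representation gives
\[
\int_{\R^n}\frac{\tilde v(x')}{|x'-x_i|^{n+2s}}\,dx' \;=\; \Df\psi(x_i)-w(x_i,0)\;\leq\; \Df\psi(x_i)-r^{\alpha+\delta_\alpha}.
\]
Applying Theorem \ref{thm:unif C1a v} at the free boundary point $0$ and letting the radius tend to $0$ forces $w(0,0)=0$, so the analogous identity at the origin reads $\int\tilde v(x')|x'|^{-n-2s}\,dx'=\Df\psi(0)$.

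Taking the $\lambda_i$-weighted sum of the identities at the $x_i$, subtracting the identity at $0$, and using the $(1-s)$-Hölder continuity of $\Df\psi$ to bound $\sum_i\lambda_i\Df\psi(x_i)-\Df\psi(0)$ by $Cr^{1-s}$, I obtain
\[
\int_{\R^n}\tilde v(x')\Bigl[\,\textstyle\sum_i\lambda_i|x'-x_i|^{-n-2s}-|x'|^{-n-2s}\Bigr]\,dx'\;\leq\; Cr^{1-s}-r^{\alpha+\delta_\alpha}.
\]
To convert this into a contradiction I would split $\R^n=B_\rho\cup(\R^n\setminus B_\rho)$ at an optimized scale $\rho\in(r,1)$: on $B_\rho$ use the growth $\tilde v(x')\leq\bar C|x'|^{\alpha+2s}$ from Theorem \ref{thm:unif C1a v}, and on the complement use the $L^\infty$-bound on $\tilde v$ together with a second-order Taylor expansion of $y\mapsto|y|^{-n-2s}$ around $x'$ (whose first-order term vanishes thanks to $\sum_i\lambda_ix_i=0$, leaving a quadratic remainder of order $r^2/|x'|^{n+2s+2}$). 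Combined with the $(2C_0)$-semiconvexity of $\tilde v$, equivalently the convexity of $\hat v:=\tilde v+C_0|\cdot|^2$, this yields an upper bound on the left-hand side that must be reconciled with the negative leading term $-r^{\alpha+\delta_\alpha}$ on the right.

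The main obstacle is the careful bookkeeping of exponents needed so that the optimized bound on the left-hand side is genuinely smaller than $r^{\alpha+\delta_\alpha}$. This is precisely where the specific value $\delta_\alpha=\tfrac{1}{4}\bigl(\tfrac{\alpha}{\alpha+2s}-\tfrac{\alpha}{2}\bigr)$ arises: the ratio $\tfrac{\alpha}{\alpha+2s}$ reflects the balance between the growth exponents $\alpha$ of $w$ and $\alpha+2s$ of $\tilde v$; the $\tfrac{\alpha}{2}$ tracks the loss from the quadratic correction in the kernel expansion (equivalently, from the quadratic correction in the semiconvexity of $\tilde v$); and the prefactor $\tfrac{1}{4}$ provides the slack required to absorb both the $r^{1-s}$ Hölder error coming from $\Df\psi$ and the subleading contributions at the optimal scale, once $r_0$ is chosen sufficiently small depending on $\alpha,s,\bar C$, and $C_0$.
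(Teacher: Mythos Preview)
Your approach is genuinely different from the paper's and, as sketched, has a real gap.

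The paper does not manipulate the singular integral on $\R^n$; it works with the $L_a$-harmonic extension $\tilde v(x,y)$ and compares values at a height $y=h>0$. From (B3) and the $a$-semiconcavity in $y$ one gets, for each $x$ with $w(x,0)\geq r^{\alpha+\delta_\alpha}$,
\[
\tilde v(x,h)\;\leq\; -\tfrac{1}{2s}\,r^{\alpha+\delta_\alpha}h^{2s}+\tfrac{nC_0}{1+a}h^2,
\]
while the two-variable decay established in Lemma~\ref{lemma:deriv to fct} gives $\tilde v(0,h)\geq -\bar C\,h^{\alpha+2s}$. If the convex hull of $A_r$ contained $0$, the $(2C_0)$-semiconvexity of $\tilde v(\cdot,h)$ would force $\tilde v(0,h)\leq \sup_{A_r}\tilde v(\cdot,h)+C_0r^2$. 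Choosing $h=r^{1+2\delta_\alpha/\alpha}$ makes $h^2\ll r^2\ll h^{\alpha+2s}\ll r^{\alpha+\delta_\alpha}h^{2s}$, and the contradiction follows. The value of $\delta_\alpha$ is dictated by the last inequality $(\alpha+2s)(1+2\delta_\alpha/\alpha)<2$; no $\Df\psi$ error ever enters.

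Two things break in your outline. First, the term $Cr^{1-s}$ coming from the H\"older continuity of $\Df\psi$ can dominate $r^{\alpha+\delta_\alpha}$: at $\alpha=1-s$ one computes $\delta_{1-s}=(1-s)^2/(8(1+s))>0$, hence $\alpha+\delta_\alpha>1-s$ and your right-hand side $Cr^{1-s}-r^{\alpha+\delta_\alpha}$ is \emph{positive} for small $r$, so no contradiction is possible. Second, and more fundamentally, the lower bound you can extract for the left-hand side is too weak. On the annulus $B_\rho\setminus B_{2r}$ you have $0\leq\tilde v(x')\leq\bar C|x'|^{\alpha+2s}$ and, from the quadratic Taylor expansion of the kernel, $|K(x')|\lesssim r^2|x'|^{-n-2s-2}$; this yields only
\[
\int_{B_\rho\setminus B_{2r}}\tilde v\,|K|\;\lesssim\; r^2\int_{2r}^{\rho} s^{\alpha-3}\,ds\;\sim\; r^{\alpha},
\]
which is the \emph{same} order as the barrier you must beat, not $o(r^{\alpha+\delta_\alpha})$. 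The extension variable $y$ is precisely what furnishes an independent scale $h$ allowing one to separate the exponents; a single radial cutoff $\rho$ cannot play both roles.
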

\begin{proof}
Thanks to (B3),
$$
\tilde v(x,0)=0\quad \text{and}\quad\lim_{y\to 0^+}y^a\tilde v_y(t,x,y)=-w(x,0) \leq -r^{\a+\delta_\a}
\qquad \forall\,x \in \{w(x,0)\geq r^{\a+\delta_\a}\}.
$$
Hence, by the ``$a$-semiconcavity'' (B2) of $\tilde v$ in $y$, for any $x \in \{w(x,0)\geq r^{\a+\delta_\a}\}$ we have
\begin{equation}
\label{eq:upper bound tilde v}
\begin{split}
\tilde v(x,h) &\leq \int_0^h \frac{s^a\tilde v_y(t,x,s)}{s^a}\,ds\\
&\leq -\int_0^h \frac{r^{\a+\delta_\a}}{s^a}\,ds+ \int_0^h \frac{1}{s^a}\biggl(\int_0^s 2nC_0 \tau^a\,d\tau \biggr)\,ds\\
&=-\frac{1}{1-a}r^{\a+\d_\a} h^{1-a} + \frac{nC_0}{1+a}h^2=-\frac{1}{2s}r^{\a+\d_\a}h^{2s}  + \frac{nC_0}{1+a}h^2.
\end{split}
\end{equation}
On the other hand, Theorem \ref{thm:unif C1a v} gives
\begin{equation}
\label{eq:lower bound tilde v}
\tilde v(0,h)=\tilde v(0,h)-\tilde v(0,0) \geq -\bar Ch^{\a+2s}.
\end{equation}
Assume now by contradiction that the convex hull of the set $\{(x,0)\,:\,w(x,0)\geq r^{\a+\delta_\a}\}\cap B_r$
contains $(0,0)$. Then, by the $(2C_0)$-semiconvexity of $\tilde v(\cdot,h)$ (see (B2)) we get
$$
\tilde v(0,h) \leq \sup_{x \in \{w(x,0)\geq r^{\a+\delta_\a}\}} \tilde v(x,h) +C_0r^2,
$$ 
which together with \eqref{eq:lower bound tilde v} and \eqref{eq:upper bound tilde v} gives
$$
\bar C h^{\a+2s} \geq \frac{1}{2s}r^{\a+\d_\a}h^{2s} - \frac{nC_0}{1+a}h^2- C_0r^2
$$
for all $r,h\in (0,1)$.
To get a contradiction from the above inequality, we want to choose $h=h(r)$ in such a way that
$$
h^2 \ll r^2 \ll h^{\a+2s} \ll r^{\a+\d_\a}h^{2s}\qquad \text{for $r$ sufficiently small}.
$$
To this aim, set $h=r^{1+2\d_\a/\a}$. Then $h^\a=r^{\a+2\d_\a}=o(r^{\a+\d_\a})$, and both the first and the third condition above hold.
To ensure that also the second one is satisfied, it suffices to have
$$
(\a+2s)\biggl(1+2\frac{\d_\a}{\a}\biggr) <2,
$$
that is
$$
\d_\a< \frac{1}{2}\left(\frac{\a}{\a+2s} -\frac{\a}{2}\right).
$$
Recalling that  $\a +2s <2$ (so, the right hand side is positive) and the definition of $\delta_{\a}$, we get the desired
contradiction, which concludes the proof.
\end{proof}

We now want to use a monotonicity formula to improve the decay of
$w(x,y)$ at the origin.
We first need some preliminary results:
\begin{lemma}
\label{lemma:prelim for monot}
(i) There exists a constant $C'$, depending on $C_0$, $\|\Df \psi\|_{C_x^{1-s}(\R^n)}$, $\|v-\psi\|_{L^\infty(\R^n)}$, and $\|\Df v\|_{L^\infty(\R^n)}$ only, such that  
$$
\limsup_{y \to 0^+} \int_{B_r} (w^2)_y(x,y) \frac{y^{-a}}{(|x|^2+y^2)^{(n-1+a)/2}}\,dx
\geq -C'r^{\a+1+a}\qquad \forall\,r \geq 0.
$$
(ii) It holds
$$
\lim_{y \to 0^+}
\int_{B_r} w^2(x,y) \p_y \left( \frac{1}{(|x|^2+y^2)^{(n-1-a)/2}}\right)\,y^{-a}\,dx=0.
$$
\end{lemma}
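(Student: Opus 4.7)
I treat the two parts in sequence, using the $L_{-a}$-Poisson representation of $w$ together with the pointwise bound $w(x,y)\leq C(|x|^2+y^2)^{\alpha/2}$ on a half-ball around the origin. The latter follows from Corollary \ref{cor:unif C1a v} (noting that $w(0,0)=\Df v(0)\chi_{\{v=\psi\}}(0)=0$ since $0$ is a free boundary point) together with the non-negativity of the $L_{-a}$-Poisson kernel, which allows one to transfer the boundary decay $w(x,0)\leq \bar C|x|^\alpha$ to the whole half-ball.

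For part (ii), I compute directly
\[
y^{-a}\,\p_y\!\Bigl((|x|^2+y^2)^{-(n-1-a)/2}\Bigr)=-(n-1-a)\,\frac{y^{1-a}}{(|x|^2+y^2)^{(n+1-a)/2}},
\]
so the integral equals, up to a fixed constant, $-\int_{B_r} w^2(x,y)\,y^{1-a}(|x|^2+y^2)^{-(n+1-a)/2}\,dx$. Substituting $w^2\leq C(|x|^2+y^2)^\alpha$ and performing the change of variables $x=y\xi$ turns this into $Cy^{2\alpha}$ times an integral over $B_{r/y}$ whose integrand decays like $|\xi|^{2\alpha+a-2}$ at infinity. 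The resulting expression is either bounded (if $\alpha<s$, where the $\xi$-integral converges at infinity) or of size $(r/y)^{2\alpha+a-1}$ (if $\alpha\geq s$); in both cases, since $1-a=2s>0$, the whole quantity vanishes as $y\to 0^+$.

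For part (i), I decompose $(w^2)_y=2ww_y$ and use the $L_{-a}$-Poisson formula
\[
w(x,y)=c_{n,a}\int_{\R^n}\frac{y^{1+a}\,w(x',0)}{(|x-x'|^2+y^2)^{(n+1+a)/2}}\,dx'
\]
to express $y^{-a}w_y(x,y)$ as a convolution-type integral that, in the limit $y\to 0^+$, coincides (up to a positive constant) with $-(-\Delta)^{1-s}w(x,0)$. The integrand $2w(x,y)\,y^{-a}w_y(x,y)\,|z|^{-(n-1+a)}$ is then estimated from below by splitting this integral according to whether $|x-x'|<|x|/2$ (near-singular part, controlled by the modulus $|w(x',0)-w(x,0)|\leq \bar C|x-x'|^\alpha$ supplied by Corollary \ref{cor:unif C1a v}) or $|x-x'|\geq|x|/2$ (far-field part, controlled by the global bound $\|w(\cdot,0)\|_{L^\infty}\leq \|\Df v\|_{L^\infty}$ coming from (A3) and (A6)). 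This yields a lower bound on the integrand of the form $-C\,|x|^{\alpha+2s-2}\cdot |x|^{-(n-1+a)}$, whose integral over $B_r$ in polar coordinates gives exactly $-C'r^{\alpha+1+a}$ (uniformly for $y$ small). Passing to $\limsup_{y\to 0^+}$, justified by Fatou's lemma applied to the negative part, completes the argument.

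The main technical obstacle is to obtain the correct scaling $r^{\alpha+1+a}=r^{\alpha+2-2s}$ in part (i). A crude application of interior Schauder estimates to $|w_y(x,y)|$ gives the wrong exponent of $r$, because $(-\Delta)^{1-s}w(x,0)$ need not be pointwise bounded when $w(x,0)\in C_x^\alpha$ with $\alpha<2(1-s)$ (as is the case here, since $\alpha\leq 1-s$). The remedy is to work directly with the explicit Poisson integral for $w_y$ and to exploit simultaneously the $C^\alpha$-modulus of $w(x,0)$ and the decay $w(x,0)\leq \bar C|x|^\alpha$ at the free boundary point $0$: together these produce the cancellation needed in the near-singular region and give precisely the power of $r$ matching the singular weight $|z|^{-(n-1+a)}$ after integration.
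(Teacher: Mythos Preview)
Your treatment of part (ii) is fine and matches the paper's argument: both use the decay $w(x,y)\leq C(|x|^2+y^2)^{\alpha/2}$ to dominate the integrand and show it vanishes as $y\to 0^+$.

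For part (i), however, there is a genuine gap. Your plan is to bound the integrand $2w(x,y)\,y^{-a}w_y(x,y)$ pointwise from below using only the Poisson representation and the $C^\alpha$ modulus of $w(\cdot,0)$. But the near-singular piece of the Poisson integral is \emph{not} uniformly controlled in $y$: with the estimate $|w(x',0)-w(x,0)|\leq \bar C|x-x'|^\alpha$ on $\{|x-x'|<|x|/2\}$, one obtains
\[
\Bigl|\,\text{near-singular contribution to }y^{-a}w_y(x,y)\,\Bigr|
\;\lesssim\;\int_{|z|<|x|/2}\frac{|z|^\alpha}{(|z|^2+y^2)^{(n+1+a)/2}}\,dz
\;\sim\; y^{\alpha-1-a},
\]
which blows up as $y\to 0^+$ because $\alpha\leq 1-s<1+a$. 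Multiplying by $w(x,y)\lesssim|x|^\alpha$ and integrating against the singular weight then produces a term of order $-y^{\alpha-1-a}r^{\alpha+1+a}\to-\infty$, so no $y$-uniform lower bound follows, and Fatou for the negative part goes the wrong way to rescue the $\limsup$. (Even the far-field piece alone, which does give your claimed $|x|^{\alpha+2s-2}$, after integration against the correct weight $|z|^{-(n-1-a)}$ yields only $r^\alpha$, not $r^{\alpha+1+a}$.)

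What is missing is a structural input linking $w$ back to $v$. The paper compares $w$ with the $L_{-a}$-harmonic function $-y^a v_y$ via the maximum principle (using (A5)--(A6) on the boundary) to get $w\geq -y^a v_y$, and then invokes the $a$-semiconcavity $\partial_y(y^a v_y)\leq nC_0 y^a$ (property (A8), a consequence of the spatial semiconvexity of $v$) to deduce the key one-sided bound
\[
w(x,y)\;\geq\; w(x,0)-\frac{nC_0}{1+a}\,y^{1+a}\qquad\forall\,x\in\R^n,\ y>0.
\]
This immediately gives $w^2(x,y)-w^2(x,0)\geq -Ky^{1+a}(r+y)^\alpha$ on $B_r$, and the paper then turns this \emph{integrated} lower bound into the desired $\limsup$ estimate by passing to the variable $s=(y/(1+a))^{1+a}$ (so that $y^{-a}(w^2)_y=(\tilde w^2)_s$), averaging in $s\in[0,\varepsilon]$, and carefully controlling the resulting boundary terms. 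The point is that one never needs a pointwise bound on $y^{-a}w_y$; the semiconcavity coming from $v$ supplies exactly the right one-sided control on the increment $w(x,y)-w(x,0)$, and this cannot be extracted from the $C^\alpha$ regularity of $w(\cdot,0)$ alone.
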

\begin{proof}
\textit{(i)} To show the estimate, let us observe that:
\begin{enumerate}
\item[(1)] Since $w(\cdot,0)=0$  on $\R^n\setminus \L$ while $w(\cdot,0)\geq 0$ on $\L$ (see (A6)), by the maximum principle we get
$w(x,y)\geq 0$. Hence
$$
w(x,y)\geq w(x,0) \qquad \forall \,x \in \R^n\setminus \L,\,y>0.
$$
\item[(2)] By the $a$-semiconcavity of $v$ in $y$ (see (A8)), 
$$
y^a v_y(x,y) \leq  \lim_{s\to 0^+}s^a v_y(x,s) +\frac{nC_0}{1+a} y^{1+a}.
$$
(Observe that the above limit always exists, since $\Df v(x,0)$ is H\"older continuous 
on the contact set, while $v$ is smooth outside, see (A5).)
\item[(3)] 
The function $y^a v_y$ solves
$$
\left\{
\begin{array}{l}
L_{-a}(y^a v_y)=0,\\
\lim_{y\to 0^+}y^a v_y(x,0)=-\Df v(x,0),
\end{array}
\right.
$$
(see \cite[Subsection 2.3]{cafsil}).
Since $w(x,0)\geq \Df v(x,0)$ by (A5), the maximum principle gives $w\geq -y^a v_y$ on $\R^n\times \R^+$.
Hence, since $w(x,0)=-\lim_{y\to 0^+}y^a v_y(x,0)$ in $\L$, by (2) above we get
$$
w(x,y) \geq  w(x,0) - \frac{nC_0}{1+a}y^{1+a}\qquad \forall\, x\in \L,\,y>0.
$$
\end{enumerate}
Combining (1) and (3) we obtain
\begin{equation}
\label{eq:bound below w}
w(x,y) \geq  w(x,0) - \frac{nC_0}{1+a}y^{1+a}\qquad \forall\, x\in \R^n,\,y>0.
\end{equation}
This estimate, together with the $C_{x}^{\a}$ regularity of $w$ and the fact that $w$ is non-negative, implies that, for all $x \in B_r$ and $y>0$,
\begin{equation}
\label{eq:bound 1+a}
w^2(x,y)-  w^2(x,0)=[w(x,y)-  w(x,0)][w(x,y)+  w(x,0)]\geq -K y^{1+a} (r+y)^\a,
\end{equation}
for some uniform constant $K>0$.

We now want to estimate from below
$$
\limsup_{y \to 0^+} \int_{B_r} (w^2)_y(x,y) \frac{y^{-a}}{(|x|^2+y^2)^{(n-1-a)/2}}\,dx.
$$
To this aim, consider the change of variable $s=s(y):=\left(\frac{y}{1+a}\right)^{1+a}$ and define
$\tilde w(x,s(y)):=w(x,y)$. Then \eqref{eq:bound 1+a} becomes
\begin{equation}
\label{eq:bound 1}
\tilde w^2(x,s)-  \tilde w^2(x,0)\geq -K' s (r+s^{1/(1+a)})^\a \qquad \forall \,x \in B_r,\,s>0,
\end{equation}
for some uniform constant $K'>0$.
Moreover, since $y^{-a}(w^2)_y(x,y)=(\tilde w^2)_s(x,s)$, we are left with estimating
$$
\limsup_{s \to 0^+} \int_{B_r} (\tilde w^2)_s(x,s) \frac{1}{(|x|^2+(1+a)^2 s^{2/(1+a)})^{(n-1-a)/2}}\,dx.
$$
To do this, we average the above expression with respect to $s \in [0,\e]$ and we use Fubini Theorem to get
\begin{align*}
\frac{1}{\e}\int_0^\e ds &\int_{B_r} (\tilde w^2)_s(x,s) \frac{1}{(|x|^2+(1+a)^2 s^{2/(1+a)})^{(n-1-a)/2}}\,dx\\
&=\int_{B_r} \frac{1}{\e}\left[\frac{\tilde w^2(x,\e)}{(|x|^2+(1+a)^2 \e^{2/(1+a)})^{(n-1-a)/2}}-  \frac{\tilde w^2(x,0)}{|x|^{n-1-a}}\right] \,dx\\
&\qquad-\frac{1}{\e}\int_0^\e ds \int_{B_r}\tilde w^2(x,s) \frac{d}{ds}\left( \frac{1}{(|x|^2+(1+a)^2 s^{2/(1+a)})^{(n-1-a)/2}}\right)\,dx.
\end{align*}
Now, thanks to \eqref{eq:bound 1}, the $C_{x}^{\a}$-regularity of $w(x,0)=\tilde w(x,0)$,
and the fact that $$
\frac{d}{ds}\left( \frac{1}{(|x|^2+(1+a)^2 s^{2/(1+a)})^{(n-1-a)/2}}\right)\leq 0,
$$
we obtain that the above expression is bounded from below by
\begin{align*}
 \int_{B_r} &\frac{1}{\e}\left[\frac{\tilde w^2(x,0) - K'\e (r+\e^{1/(1+a)})^\a}{(|x|^2+(1+a)^2 \e^{2/(1+a)})^{(n-1-a)/2}}-  \frac{\tilde w^2(x,0)}{|x|^{n-1-a}}\right] \,dx\\
&\geq -K'(r+\e^{1/(1+a)})^\a\int_{B_r}\frac{1}{|x|^{n-1-a}} \,dx\\
&\quad+ C\int_{B_r}\frac{|x|^{2\a}}{\e}\left[\frac{1}{(|x|^2+(1+a)^2 \e^{2/(1+a)})^{(n-1-a)/2}}-  \frac{1}{|x|^{n-1-a}}\right] \,dx
\end{align*}
Concerning the first term in the right hand side,  since $a=1-2s<1$ we have
$$
(r+\e^{1/(1+a)})^\a\int_{B_r}\frac{1}{|x|^{n-1-a}} \,dx \to C_{n,a}r^{\a+1+a}=C_{n,a}r^{\a+1+a}\qquad \text{as }\e \to 0.
$$
For the second term, we want to prove that it converges to $0$ as $\e\to 0.$ To this aim, we split the integral into
two terms: the integral over $B_{\e^\beta}$, and the one over $B_r \setminus B_{\e^\beta}$, where $\beta>0$ has to be chosen.
For the first term, we can bound it from below by
$$
-\frac{C}{\e} \int_{B_{\e^\beta}} \frac{|x|^{2\a}}{|x|^{n-1-a}}\,dx = C \e^{\b (2\a + a+1) -1}.
$$
Thus, by choosing $\beta \in \left(\frac1{2\a + a+1},\frac1{1+a}\right)$ we ensure that the above expression converges to $0$ as $\e\to 0$.
Moreover, the fact that $\beta<1/(1+a)$ implies that
$$
\e^{2/(1+a)} \ll |x|^2 \qquad \forall\, |x|\geq \e^\b.
$$
Therefore, for estimating the second part we can use polar coordinates and the fact that
$$
(|x|^2+(1+a)^2 \e^{2/(1+a)})^{(n-1-a)/2} \sim |x|^{n-1-a}+C\e^{2/(1+a)}|x|^{n-3-a} \qquad \forall\, |x|\geq \e^\b
$$
to write
\begin{align*}
\int_{B_r\setminus B_{\e^\beta}}&\frac{|x|^{2\a}}{\e}\left[\frac{1}{(|x|^2+(1+a)^2 \e^{2/(1+a)})^{(n-1-a)/2}}-  \frac{1}{|x|^{n-1-a}}\right] \,dx\\
&\sim \frac{C}{\e} \int_{\e^\b}^r \rho^{n-1+2\a} \left[\frac{1}{\rho^{n-1-a}+C\e^{2/(1+a)}\rho^{n-3-a}}-  \frac{1}{\rho^{n-1-a}}\right]\,d\rho\\
&=\frac{C}{\e} \int_{\e^\b}^r \rho^{2\a+a} \left[\frac{\rho^2}{\rho^2+C\e^{2/(1+a)}}- 1\right]\,d\rho
=-\frac{C}{\e} \int_{\e^\b}^r \rho^{2\a+a} \frac{\e^{2/(1+a)}}{\rho^2+C\e^{2/(1+a)}}\,d\rho\\
&\geq -\frac{C\e^{2/(1+a)}}{\e} \int_{\e^\b}^r \rho^{2\a+a-2}\,d\rho\geq -C \e^{2/(1+a) - 1}\bigl[1+\e^{\beta(2\a+a-1)}\bigr]. \\
\end{align*}
Let us remark that $2/(1+a)>1$, so if
$2\a+a-1\geq 0$ the above expression obviously converges to $0$.
On the other hand, if  $2\a+a-1< 0$, 
since
$\b<1/(1+a)$ we get
$$
\frac2{1+a}- 1+\beta(2\a+a-1)> \frac2{1+a}- 1+\frac{2\a+a-1}{1+a} \geq \frac{2-1-a+2\a+a-1}{1+a}=\frac{2\a}{1+a}>0,
$$
and again the above expression converges to $0$.
All in all, we conclude that
$$
\int_{B_r}\frac{|x|^{2\a}}{\e}\left[\frac{1}{(|x|^2+(1+a)^2 \e^{2/(1+a)})^{(n-1-a)/2}}-  \frac{1}{|x|^{n-1-a}}\right] \,dx\to 0 \qquad \text{as $\e\to 0$},
$$
so that combining all our estimates together we obtain
$$
\liminf_{\e\to 0}\frac{1}{\e}\int_0^\e ds \int_{B_r} (\tilde w^2)_s(x,s) \frac{1}{(|x|^2+(1+a)^2 s^{2/(1+a)})^{(n-1-a)/2}}\,dx \geq -K'C_{n,a}r^{\a+1+a}.
$$
From this fact we easily deduce that
$$
\limsup_{\e\to 0} \int_{B_r} (\tilde w^2)_s(x,\e) \frac{1}{(|x|^2+(1+a)^2 \e^{2/(1+a)})^{(n-1-a)/2}}\,dx \geq -K'C_{n,a}r^{\a+1+a},
$$
which concludes the proof of (i).\\
\textit{(ii)} In this case, we use the $C_{x}^{\a}$-regularity of $w$ to control the integral by
$$
y^{1-a}\int_{B_r} \frac{1}{(|x|^2+y^2)^{(n+1-a)/2-\a}} \,dx.
$$
Using polar coordinates, the above integral is comparable to
$$
y^{1-a}\int_0^r \frac{\rho^{n-1}}{(\rho^2+y^2)^{(n+1-a)/2-\a}}\,d\rho\sim y^{1-a}\int_0^r \frac{\rho^{n-1}}{(\rho+y)^{n+1-a-2\a}}\,d\rho \sim \frac{y^{1-a}}{y^{1-a-2\a}}=y^{2\a},
$$
and the above expression converges to $0$ as $y\to 0$.
\end{proof}

We will also need a result on the first eigenvalue of a weighted Laplacian on the half-sphere.
We use $\n_\theta w$ to denote the derivative of $w$ with respect to the angular variables:.
\begin{lemma}
\label{lem:eigen}
Set
$\S^{n+1}_+:=\S^{n+1}\cap \{x_{n+1}\geq 0\}$, $\S^{n+1}_0:=\p\S^{n+1}_+
=\S^{n+1}\cap \{x_{n+1}=0\}$, $\S^{n+1}_{0,+}:=\S^{n+1}\cap \{x_{n+1}=0\}\cap \{x_n \geq 0\}$. Then
$$
\inf_{h \in H^{1/2}(\S^{n+1}_0),\,h=0 \text{ on } \S^{n+1}_{0,+} }
\frac{\int_{\S^{n+1}_+}|\n_\theta h|^2y^{-a}\,d\sigma}{\int_{\S^{n+1}_+} h^2y^{-a}\,d\sigma}
=(1-s)(n-1+s).
$$
\end{lemma}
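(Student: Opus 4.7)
The plan is to identify $(1-s)(n-1+s)$ as the first eigenvalue through the standard correspondence between weighted eigenvalues on the hemisphere and homogeneity degrees of $L_{-a}$-harmonic functions in $\R^{n+1}_+$. Since $|a|=|1-2s|<1$, the weight $y^{-a}$ is of Muckenhoupt $A_2$ class, so $H^1(\S^{n+1}_+;y^{-a})$ embeds compactly into $L^2(\S^{n+1}_+;y^{-a})$, and (passing to absolute values along a minimizing sequence) there exists a non-negative minimizer $h_0$, which is a first eigenfunction of the weighted spherical Laplacian and, by the strong maximum principle, strictly positive in the open hemisphere.

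The homogeneity-eigenvalue link is the identity, for $h:\S^{n+1}_+\to\R$,
\[
L_{-a}\bigl(|X|^\gamma h(X/|X|)\bigr) = |X|^{\gamma-2} y^{-a}\bigl\{\gamma(\gamma+n-1-a)\, h + \mathcal{L}_{-a,\theta}\, h\bigr\},
\]
where $\mathcal{L}_{-a,\theta}$ denotes the spherical piece of $L_{-a}$, obtained by a direct computation in spherical coordinates on $\R^{n+1}_+$. Thus $L_{-a}(|X|^\gamma h(X/|X|))=0$ in $\R^{n+1}_+$, with $H=0$ on $\{y=0,\,x_n\geq 0\}$ and $\lim_{y\to 0^+}y^{-a}H_y=0$ on $\{y=0,\,x_n<0\}$, is exactly the spherical eigenvalue problem in the statement with eigenvalue $\lambda=\gamma(\gamma+n-1-a)$. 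Identifying $\lambda_1$ therefore reduces to locating the smallest $\gamma>0$ for which such a non-negative $L_{-a}$-harmonic function of degree $\gamma$ exists.

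Such a function is provided by the Caffarelli-Salsa-Silvestre blow-up profile $\bar v_0$ of the stationary fractional obstacle problem: after a rotation we may assume $\bar v_0\geq 0$ is $L_a$-harmonic in $\R^{n+1}_+$, homogeneous of degree $1+s$, with $\bar v_0=0$ on $\{y=0,\,x_n\leq 0\}$ and $\lim_{y\to 0^+}y^a(\bar v_0)_y=0$ on $\{y=0,\,x_n>0\}$. Setting
\[
W(X) := -y^{a}(\bar v_0)_y(X),
\]
the conjugate-equation identity (see \cite[Subsection 2.3]{cafsil}) gives $L_{-a}W=0$; the function $W$ is homogeneous of degree $(1+s)-1+a=1-s$; its trace at $\{y=0\}$ vanishes on $\{x_n>0\}$ by the Neumann condition on $\bar v_0$, and equals $\Df\bar v_0\geq 0$ on $\{x_n\leq 0\}$. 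The $L_{-a}$-maximum principle then forces $W\geq 0$ throughout $\R^{n+1}_+$, so $W|_{\S^{n+1}_+}$ is a non-negative eigenfunction, which therefore corresponds to the first eigenvalue, pinning down $\lambda_1=(1-s)(1-s+n-1-a)=(1-s)(n-1+s)$.

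The main technical point will be the clean justification of the homogeneity-eigenvalue correspondence in the weighted setting, taking care of the interplay between the two types of boundary conditions on the flat face $\{y=0\}$ and on the curved part, together with ensuring strict positivity of $h_0$ so that the identification with the explicit $W$ yields the first eigenvalue rather than a higher one.
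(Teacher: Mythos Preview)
Your proposal is correct and reaches the same conclusion, but the route differs from the paper's in a few interesting ways. The paper writes down the eigenfunction explicitly as the restriction to $\S^{n+1}_+$ of
\[
\bar H(x,y)=\bigl(\sqrt{x_n^2+y^2}-x_n\bigr)^{1-s},
\]
verifies $L_{-a}\bar H=0$ by a direct computation (via the auxiliary harmonic function $G=(\sqrt{x_n^2+y^2}-x_n)^{1/2}$), cites \cite[Proposition~5.4]{cafsalsil} for the assertion that this is the first eigenfunction, and then extracts $\lambda_1$ by evaluating the spherical equation at the single point $(0,1)$. You instead construct the same object as $W=-y^a(\bar v_0)_y$ from the obstacle blow-up profile and use the general identity $\lambda=\gamma(\gamma+n-1-a)$ to read off $\lambda_1$ from the homogeneity degree $\gamma=1-s$; the claim that $W$ gives the \emph{first} eigenvalue you deduce from its nonnegativity together with simplicity of the ground state. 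Your argument is more conceptual and makes the ``first eigenfunction'' step transparent rather than citing it, at the cost of invoking the weighted compact embedding and the strong maximum principle; the paper's version is quicker and entirely explicit, but leans on the external reference for exactly that step. Note that your $W$ and the paper's $\bar H$ are in fact the same function up to a constant, so the two arguments are really two ways of packaging the same computation.
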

\begin{proof}
Let $\bar h(\theta)$ denote the restriction to $\S^{n+1}_+$ of
$$
\bar H(x,y):=\bigl(\sqrt{x_n^2+y^2}-x_n\bigr)^{1-s},
$$ 
that is $\bar H=r^{1-s}\bar h(\theta)$. 
As shown in \cite[Proposition 5.4]{cafsalsil}, $\bar h$ is the first eigenfunction corresponding to the above minimization problem.
Moreover, $\bar H$ solves $L_{-a} \bar H=0$ for $y>0$.\footnote{A simple way to check this fact is the following:
the function $G:=\bigl(\sqrt{x_n^2+y^2}-x_n\bigr)^{1/2}$ is harmonic inside $y>0$, since it is equal to the imaginary part of the holomorphic function $z \mapsto z^{1/2}$, $z=x_n+iy_n$.
Moreover, by a direct computation it is easily checked that $G$ satisfies
$$
|\nabla_x G|^2 + (G_y)^2 -\frac{G G_y}{y}=0\qquad \forall \,(x,y) \in\R^n \times \R^+.
$$
Thanks to this fact, since $\bar H=G^{2(1-s)}=G^{1+a}$, we get
$$
L_{-a}\bar H=L_{-a}\bigl(G^{1+a}\bigr)=(1+a)G^a \Delta_{x,y}G+(1+a)a\, G^{a-1} \left[ |\nabla_x G|^2 + (G_y)^2 -\frac{G G_y}{y}\right]=0,
$$
as desired.
}
Let $\l_1$ denote the eigenvalue corresponding to $\bar h$, so that
$$
\inf_{h \in H^{1/2}(\S^{n+1}_0),\,h=0 \text{ on } \S^{n+1}_{0,+} }
\frac{\int_{\S^{n+1}_+}|\n_\theta h|^2y^{-a}\,d\sigma}{\int_{\S^{n+1}_+} h^2y^{-a}\,d\sigma}=-\l_1.
$$
In order to explicitly compute $\l_1$, we observe that
$\div_\theta(y^{-a} \nabla_\theta \bar h)=\l_1\bar h$.
In particular, evaluating the above identity at the point $(0,1)\in \R^n\times \R^+$ we obtain
$$
\Delta_\theta \bar h(0,1)=\l_1 \bar h(0,1).
$$
We now write the equation $L_{-a} \bar H=0$ in spherical coordinates:
$$
\Delta_r \bar H+\frac{n}{r} \bar H_r +\frac{1}{r^2}\Delta_\theta \bar H-\frac{a}{y}\bar H_y=0
$$
Evaluating the above expression at $(0,1)\in \R^n\times \R^+$ and recalling that $\bar H=r^{1-s}\bar h$, we get
\begin{align*}
0&=\Delta_r \bar H(0,1) + n \bar H_r(0,1) + \Delta_\theta \bar H(0,1) - a \bar H_r(0,1)\\
&=-(1-s)s \,\bar h(0,1) + (1-s)(n-a)\,\bar h(0,1) + \Delta_\theta \bar h(0,1).
\end{align*}
Hence
$$
\Delta_\theta \bar h(0,1)=- (1-s)(n-a-s)\,\bar h(0,1)=- (1-s)(n-1+s)\,\bar h(0,1).
$$
which gives $\l_1=- (1-s)(n-1+s)$ as desired.
\end{proof}

To simplify notation, we use the variable $z$ to denote a point $(x,y)\in \R^n\times \R^+$.

\begin{lemma}
\label{lemma:monotonicity}
Let $w$ and $r_0$ be as above, set
$B_r^+:=\{z \in \R^n\times \R^+\,:\, |z| <r\}$, and define
$$
\var(r):=\frac{1}{r^{2(1-s)}}\int_{B_r^+} \frac{|\n w(z)|^2y^{-a}}{|z|^{n-1-a}}\,dz\qquad \forall \,r\leq 1.
$$
There exists a constant $C''$, depending on $C_0$, $\|\Df \psi\|_{C_x^{1-s}(\R^n)}$, $\|v-\psi\|_{L^\infty(\R^n)}$, and $\|\Df v\|_{L^\infty(\R^n)}$ only, such that
$$
\var(r)\leq C''\left[1+r^{2\a +\d_\a -a-1}\right]\qquad \forall \,r\leq 1.
$$
\end{lemma}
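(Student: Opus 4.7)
The plan is to establish an Almgren--Pohozaev type monotonicity formula for $\var(r)$, in the spirit of the stationary estimate of \cite{athcaf}. Concretely, I will aim at a differential inequality of the form $r\,\var'(r) \geq -C(1+r^{2\a+\d_\a-a-1})$, which, once integrated on $[r,r_0]$ and combined with a uniform upper bound $\var(r_0)\leq C$ (itself coming from the uniform $C_x^{\a}$-bound on $w(\cdot,0)$ in Corollary \ref{cor:unif C1a v} together with standard interior $L_{-a}$ energy estimates), yields the claim. The three ingredients I would use are a Rellich--Pohozaev identity for $L_{-a}$-harmonic functions on $B_r^+$, the half-sphere eigenvalue estimate of Lemma \ref{lem:eigen}, and the geometric input of Lemma \ref{lem:apply monot}.

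First I would multiply the equation $L_{-a}w=0$ by $z\cdot\n w$ and integrate by parts on $B_r^+$. The bulk contribution is a dimensional constant times $\int_{B_r^+}y^{-a}|\n w|^2\,dz$, which---after the weight $|z|^{-(n-1-a)}$ is re-introduced via a differentiation in $r$---is precisely $D(r)$. The boundary contributions split into: (a) a spherical integral on $\p B_r^+\cap\{y>0\}$ with integrand of the form $(r w_r^2-r|\n w|^2/2)y^{-a}$, which, after decomposing $|\n w|^2=w_r^2+r^{-2}|\n_\theta w|^2$, produces the main monotonicity term; and (b) a flat-boundary contribution involving $\lim_{y\to 0^+}y^{-a}w_y$ that plays the role of the obstacle error. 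For (b), I would use the sign information $\lim_{y\to 0^+}y^{-a}w_y\leq 0$ on $\L$ and $\geq 0$ off $\L$ (from $w(x,0)=\Df v\,\chi_{\{v=\psi\}}\geq 0$ together with Hopf), and bypass the delicate object $\lim y^{-a}w_y$ itself by working with the regular quantity $(w^2)_y=2w w_y$; the resulting integrals are precisely those controlled in Lemma \ref{lemma:prelim for monot}(i)--(ii), yielding an error of order $r^{\a+1+a}$ which, after the $r^{-(n-1-a)-2(1-s)}$ normalization built into $\var$, is absorbed into the $r^{2\a+\d_\a-a-1}$ term.

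Piece (a) is the heart of the argument and, in my view, the main obstacle. Lemma \ref{lem:eigen} identifies $(1-s)(n-1+s)$ as the first eigenvalue on $\S^{n+1}_+$ with Dirichlet condition on the half-equator $\S^{n+1}_{0,+}$, and in the fully homogeneous model this is exactly what forces the monotonicity of $\var$. Here $w(\cdot,0)$ does not vanish on any half-equator; however Lemma \ref{lem:apply monot} guarantees that, up to rotation, $w(x,0)\leq r^{\a+\d_\a}$ on the half-disc $\{x_n\geq 0\}\cap B_r$. Truncating $w$ at level $r^{\a+\d_\a}$ and plugging $(w-r^{\a+\d_\a})_+$ into the Rayleigh quotient recovers the eigenvalue inequality for the angular term, at the price of an extra error of the same order $r^{2\a+\d_\a-a-1}$ (coming from the interaction of the truncation level with the weighted $L^2$-norm of $w$ on $\p B_r^+$, itself bounded via Corollary \ref{cor:unif C1a v}). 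The delicate point is to tune the truncation so that the various error exponents line up precisely with the bound stated in the lemma; this quantitative conversion of the ``smallness on a half-disc'' output of Lemma \ref{lem:apply monot} into a sharp deficit in the Rayleigh quotient is where I expect most of the work to go.
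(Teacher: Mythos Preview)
Your plan correctly identifies every key ingredient---Lemma \ref{lem:apply monot} for the truncation, Lemma \ref{lem:eigen} for the sharp spherical eigenvalue, Lemma \ref{lemma:prelim for monot} for the flat-boundary errors, and the final Cauchy--Schwarz/truncation step---and the overall architecture matches the paper's proof.

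There is, however, a genuine mismatch between the Rellich--Pohozaev route you describe and the boundary terms that Lemma \ref{lemma:prelim for monot} actually controls. Multiplying $L_{-a}w=0$ by $z\cdot\nabla w$ produces on $\{y=0\}$ a contribution of the form $\int_{B_r}(x\cdot\nabla_x w)\,\lim_{y\to0^+}y^{-a}w_y\,dx$, involving $\nabla_x w$ rather than $w$; this is \emph{not} the quantity $(w^2)_y=2w\,w_y$ covered by Lemma \ref{lemma:prelim for monot}(i), and there is no evident way to recast it as such (and $\nabla_x w(\cdot,0)$ has no a priori control). The paper uses a different device: it differentiates $\var(r)$ directly, replaces $|\nabla w|^2y^{-a}$ by $\tfrac12 L_{-a}(w^2)$, and integrates by parts against the fundamental solution $|z|^{-(n-1-a)}$ of $L_{-a}$. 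This produces flat-boundary integrals of exactly the two types in Lemma \ref{lemma:prelim for monot}(i)--(ii), together with spherical terms $\int_{\partial B_r^+}w\,w_r\,y^{-a}\,d\sigma$, $\int_{\partial B_r^+}w^2\,y^{-a}\,d\sigma$, and $\int_{\partial B_r^+}|\nabla w|^2\,y^{-a}\,d\sigma$; Cauchy--Schwarz on the first and the splitting $|\nabla w|^2=w_r^2+r^{-2}|\nabla_\theta w|^2$ on the last then feed directly into Lemma \ref{lem:eigen} after truncation. The resulting differential inequality is $D_r\var\geq -C\,r^{2\a+\d_\a-a-2}-C\,r^{\a+1+a}$ (note: no $-C/r$ term, so your stated form $r\var'(r)\geq -C(1+\ldots)$ is slightly off and would introduce a spurious $|\log r|$ upon integration). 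The paper also emphasizes that all computations must be done at level $\{y>\e\}$ and the limit $\e\to0$ justified; it is precisely the $L_{-a}(w^2)$ device that makes this limit pass, via Lemma \ref{lemma:prelim for monot}, with the correct exponent.
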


\begin{proof}
First of all, we show that $\var(1)$ is universally bounded, so that in particular
$\var(r)$ is well-defined for all $r \in (0,1]$.

Set $\var_\e(r):=\frac{1}{r^{2(1-s)}}\int_{B_r^+ \cap \{y>\e\}} \frac{|\n w(z)|^2y^{-a}}{|z|^{n-1-a}}\,dz$.
By the monotone convergence theorem, it suffices to estimate $\liminf_{\e\to 0}\var_\e(1)$.
Let $\chi:\R^n \to [0,1]$ be a smooth compactly supported function such that $\chi\equiv 1$ on $B_1\subset \R^n$. Then 
$$
\var_\e(r) \leq \int_\e^1 \int_{\R^n} \frac{|\n w(z)|^2y^{-a}}{|z|^{n-1-a}}\chi(x)\,dx\,dy.
$$
Since $L_{-a} w=0$ we have $L_{-a}(w^2) =
2w L_{-a} w + 2|\n w|^2y^{-a} = 2|\n w|^2y^{-a}$, which implies that the right hand side is equal to
\begin{align*}
 \int_\e^1 \int_{\R^n} \frac{L_{-a}(w^2)}{2|z|^{n-1-a}}\chi(x)\,dx\,dy&=
 \int_\e^1 \int_{\R^n} \nabla(w^2)\cdot \nabla \biggl(\frac{1}{2|z|^{n-1-a}}\biggr)y^{-a}\chi(x)\,dx\,dy\\
& +\int_\e^1 \int_{\R^n} \nabla_x(w^2)\cdot \nabla_x \chi(x) \frac{y^{-a}}{2|z|^{n-1-a}}\,dx\,dy\\
 &\quad+ 
  \int_{\R^n} (w^2)_y\frac{y^{-a}}{2|z|^{n-1-a}}\chi(x)\,dx\bigg|_{y=\e}^{y=1}.
  \end{align*}
Integrating by parts again the first two terms in the right hand side,
and using that $L_{-a} \frac{1}{|z|^{n-1-a}}=C_{n,a} \delta_{(0,0)}$, 
we find that the above expression coincides with
  \begin{multline*}
 -\int_\e^1 \int_{\R^n} w^2 \nabla_x\chi(x)\cdot \nabla_x \biggl(\frac{1}{|z|^{n-1-a}}\biggr)y^{-a}\,dx\,dy -\int_\e^1 \int_{\R^n} w^2\Delta_x \chi(x) \frac{y^{-a}}{2|z|^{n-1-a}}\,dx\,dy\\
 +  \int_{\R^n} w^2 \p_y \biggl(\frac{1}{2|z|^{n-1-a}}\biggr)y^{-a}\chi(x)\,dx\bigg|_{y=\e}^{y=1}
+  \int_{\R^n} (w^2)_y\frac{y^{-a}}{2|z|^{n-1-a}}\chi(x)\,dx\bigg|_{y=\e}^{y=1}.
  \end{multline*}
  Now, since $\chi\equiv 1$ inside $B_1$,
  the first two terms above are immediately seen to be bounded.
  Concerning the last two terms, the integrals evaluated at $y=1$ are clearly finite (and universally bounded),
  since $w$ is smooth for $y>0$. Finally, we apply Lemma \ref{lemma:prelim for monot}
  to estimate the integrals at $y=\e$,
and we obtain
  $$
  \var(1) = \liminf_{\e\to 0} \var_\e(1) \leq C_\var,
  $$
  for some constant $C_\var$ depending on $C_0$, $\|\Df \psi\|_{C_x^{1-s}(\R^n)}$, $\|v-\psi\|_{L^\infty(\R^n)}$, and $\|\Df v\|_{L^\infty(\R^n)}$ only. Observe that, as a consequence of the  fact that $\var(1)$ is finite (i.e., $ \frac{|\n w(z)|^2y^{-a}}{|z|^{n-1-a}}$ is integrable over $B_1^+$), we deduce that $\var_\e(r)\to \var(r)$ locally uniformly over $(0,1]$.

Now that we have proved that $\var(r)$ is well-defined, we want to estimates from below its
derivative. Again, we will do our computations with $\var_\e$, and then we let $\e\to 0$\footnote{
The proof of the monotonicity formula may look a bit tedious, since we always prove the result
at the $\epsilon$ level, and then we show that one can take the limit as $\e\to 0$.
Let us point out that this level of precision is actually needed: indeed, assume that 
we had chosen a different operator $L_b$
($b \in (-1,1)$) to define $w$ in \eqref{eq:def w}, and we defined $\var(r)$ replacing $-a$ by $b$
(changing, of course, the value of $s$ correspondingly). 
Then, if one does a ``formal'' proof of the monotonicity formula, one would obtain (at least in the stationary case, so that $w(x,0)=\Df v(x)$)
that Lemma \ref{lemma:monotonicity}
is true with $b$ in place of $-a$, and this would imply a false H\"older regularity for $w$
(since we know that $w$ should be only $C^{1-s}$). The fact that we have chosen the ``right'' operator $L_{-a}$ to define $w$ has played a key role in the proof of Lemma  \ref{lemma:prelim for monot}, which is now providing to us some fundamental estimates, which are needed to give a rigorous proof of the monotonicity formula.}.

Let us assume $r>\e$, and 
split $\p \left( B_r^+\cap \{y>\e\}\right)$ as the union of $\p B_r^+\cap \{y=\e\}$
and $\p B_r^+\cap \{y>\e\}$.
Using again that $L_{-a} \frac{1}{|z|^{n-1-a}}=C_{n,a} \delta_{(0,0)}$ and recalling that $a=1-2s$, we easily get
\begin{align*}
\var_\e'(r)&=-\frac{1-s}{r^{1+2(1-s)}} \int_{B_r^+ \cap \{y>\e\}} \frac{L_{-a}(w^2)}{|z|^{n-1-a}}\,dz+
\frac{1}{r^{n}} \int_{\p B_r^+\cap \{y>\e\}} |\n w(z)|^2y^{-a}\,d\sigma\\
&=-\frac{2(1-s)}{r^{1+2(1-s)}}
\int_{\p (B_r^+\cap \{y>\e\})}-w\,\n w\cdot \nu\, \frac{y^{-a}}{|z|^{n-1-a}}\,d\sigma\\
&\quad-\frac{1-s}{r^{1+2(1-s)}} \int_{B_r^+\cap \{y>\e\}} \n (w^2)\cdot \n \left( \frac{1}{|z|^{n-1-a}}\right)y^{-a}\,dz
+\frac{1}{r^{n}} \int_{\p B_r^+\cap \{y>\e\}} |\n w(z)|^2y^{-a}\,d\sigma\\
&= \frac{2(1-s)}{r^{1+2(1-s)}} \int_{\p (B_r^+\cap \{y>\e\})}-w\,\n w\cdot \nu\, \frac{y^{-a}}{|z|^{n-1-a}}\,d\sigma\\
&\quad+\frac{1-s}{r^{1+2(1-s)}}  \int_{\p (B_r^+\cap \{y>\e\})}
 -w^2 \n \left( \frac{1}{|z|^{n-1-a}}\right)
\cdot \nu\,y^{-a}\,d\sigma\\
&\quad+\frac{1}{r^{n}} \int_{\p B_r^+\cap \{y>\e\}} |\n w(z)|^2y^{-a}\,d\sigma\\
&= -\frac{2(1-s)}{r^{n+1}} \int_{\p B_r^+\cap \{y>\e\}}w\,w_r\,y^{-a}\,d\sigma
+\frac{1-s}{r^{1+2(1-s)}} 
\int_{B_r^+ \cap\{y=\e\}}(w^2)_y\,\frac{y^{-a}}{|z|^{n-1-a}}\,d\sigma\\
&\quad-\frac{(1-s)(n-1-a)}{r^{n+2}} \int_{\p B_r^+\cap \{y>\e\}} w^2y^{-a} \,d\sigma\\
&\quad+\frac{1-s}{r^{1+2(1-s)}}
\int_{B_r^+ \cap\{y=\e\}} w^2 \p_y \left( \frac{1}{|z|^{n-1-a}}\right)\,y^{-a}\,d\sigma
+\frac{1}{r^{n}} \int_{\p B_r^+\cap \{y>\e\}} |\n w(z)|^2y^{-a}\,d\sigma.
\end{align*}
Thanks to Lemma \ref{lemma:prelim for monot}, we can estimate from below both the second and the last but one term in the last expression.
So, letting $\e \to 0$ and using that $\var_\e \to \var$ locally uniformly, we deduce that the distributional derivative $D_r\var$ of $\var$ is bounded from below
by
\begin{multline*}
-\frac{2(1-s)}{r^{n+1}} \int_{\p B_{r,+}}w\,w_r\,y^{-a}\,d\sigma-C\,r^{\a+1+a}\\
-\frac{(1-s)(n-1-a)}{r^{n+2}} \int_{\p B_{r,+}} w^2y^{-a} \,d\sigma
+\frac{1}{r^{n}} \int_{\p B_{r,+}} |\n w(z)|^2y^{-a}\,d\sigma ,
\end{multline*}
for some universal constant $C$.
Now, by Schwartz's inequality the first term in the above expression can be estimated from below by
$$
-\frac{1}{r^{n}} \int_{\p B_{r,+}} (w_r)^2y^{-a}\,d\sigma- \frac{(1-s)^2}{r^{n+2}}\int_{\p B_{r,+}} w^2y^{-a} \,d\sigma.
$$
Hence, recalling that $|\n w(z)|^2=(w_r)^2+\frac{1}{r^2}|\n_\theta w_\theta|^2$ and observing that
$n-1-a+1-s=n-1+s$, we obtain
$$
D_r \var\geq\frac{1}{r^{n+2}} \int_{\p B_{r,+}} |\n_\theta w(z)|^2y^{-a}\,d\sigma
-\frac{(1-s)(n-1+s)}{r^{n+2}} \int_{\p B_{r,+}} w^2y^{-a} \,d\sigma
-C\,r^{\a+1+a}.
$$
Consider now the function $\bar W:=(w-r^{\a+\d_\a})^-$. Then $|\n_\theta \bar W(z)|^2\leq |\n_\theta w(z)|^2$.
Moreover, by Lemma \ref{lem:apply monot}, $\bar W$ is admissible for the eigenvalue problem in Lemma \ref{lem:eigen}.
Hence
\begin{align*}
D_r \var&\geq \frac{1}{r^{n+2}} \int_{\p B_{r,+}} |\n_\theta \bar W(z)|^2y^{-a}\,d\sigma
-\frac{(1-s)(n-1+s)}{r^{n+2}} \int_{\p B_{r,+}} \bar W^2 y^{-a}\,d\sigma\\
&\qquad+ \frac{(1-s)(n-1+s)}{r^{n+2}} \int_{\p B_{r,+}} \bigl[(w-\bar W)^2 +2\bar W(w-\bar W)\bigr]\,y^{-a}\,d\sigma -C\,r^{\a+1+a}\\
&\geq \frac{(1-s)(n-1+s)}{r^{n+2}} \int_{\p B_{r,+}} \bigl[(w-\bar W)^2 +2\bar W(w-\bar W)\bigr]\,y^{-a}\,d\sigma -C\,r^{\a+1+a}.
\end{align*}
Since $|\bar W|\leq |w|\leq C\,r^\a$ and $|w-\bar W|\leq r^{\a +\d_\a}$ we obtain
$$
D_r \var\geq -C\,r^{2\a +\d_\a -a-2} - Cr^{\a+1+a},
$$
which integrated over $[r,1]$ gives
$$
\var (r)\leq \var(1)+ C\,r^{2\a +\d_\a -a-1} + C\qquad \forall \,r \leq 1.
$$
(Recall that $1+a>0$.)
Since $\var(1)$ is universally bounded, this concludes the proof.
\end{proof}

We are now ready to prove the optimal decay rate around free boundary points.
\begin{proposition}
\label{prop:C1s}
There exists a constant $\tilde C>0$, depending on $C_0$, $\|\Df \psi\|_{C_x^{1-s}(\R^n)}$, $\|v-\psi\|_{L^\infty(\R^n)}$, and $\|\Df v\|_{L^\infty(\R^n)}$ only,
such that \eqref{eq:C1s} holds.
\end{proposition}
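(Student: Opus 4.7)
The plan is to use the monotonicity formula of Lemma~\ref{lemma:monotonicity} to extract a Dirichlet energy bound on $w$ with the scaling of a $C^{1-s}$ function at the origin, and then invoke a weighted Morrey--Campanato estimate to convert this into the pointwise decay \eqref{eq:C1s}. A short bootstrap in $\a$ may be needed to reach a uniform bound on $\var$.

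First, I would arrange that $\var(r)\leq C$ for all $r\in(0,1]$. Lemma~\ref{lemma:monotonicity} gives $\var(r)\leq C''[1+r^{2\a+\d_\a-a-1}]$, which is already uniform if $2\a+\d_\a\geq 1+a$. Otherwise, any polynomial control on $\var$ still yields, via the remaining steps below, an improved H\"older exponent for the trace $w(\cdot,0)=\Df v\,\chi_{\{v=\psi\}}$ at the origin, roughly $\a\mapsto \a+\d_\a/2$. Re-inserting this into Lemmas~\ref{lem:apply monot}--\ref{lemma:monotonicity} strictly improves the exponent in the bound for $\var$; since $\d_\a$ stays bounded below on $[\a_0,1-s]$ and the exponent is capped at $1-s$, finitely many iterations bring us to the uniform regime $\var\leq C$.

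Assuming $\var(r)\leq C$ for $r\in(0,1]$, I extract the Dirichlet energy bound by dyadic decomposition. On the annulus $B_{2r}^+\setminus B_r^+$ we have $|z|^{-(n-1-a)}\sim r^{-(n-1-a)}$, hence
\begin{equation*}
\int_{B_{2r}^+\setminus B_r^+}|\n w|^2 y^{-a}\,dz \leq Cr^{n-1-a}\int_{B_{2r}^+}\frac{|\n w|^2 y^{-a}}{|z|^{n-1-a}}\,dz \leq Cr^{n-1-a+2(1-s)}=Cr^n,
\end{equation*}
using $a=1-2s$ so that $-1-a+2(1-s)=0$. Summing over dyadic scales gives
\begin{equation*}
\int_{B_r^+}|\n w|^2 y^{-a}\,dz \leq Cr^n\qquad\forall\,r\leq 1.
\end{equation*}

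Finally, since $y^{-a}$ is a Muckenhoupt $A_2$ weight, the weighted Poincar\'e inequality yields
\begin{equation*}
\int_{B_r^+}(w-\ov{w}_r)^2 y^{-a}\,dz \leq Cr^2\int_{B_r^+}|\n w|^2 y^{-a}\,dz \leq Cr^{n+2},
\end{equation*}
where $\ov{w}_r$ denotes the weighted mean of $w$ on $B_r^+$. Combined with $\int_{B_r^+}y^{-a}\,dz\sim r^{n+1-a}$, this says the weighted mean-square oscillation of $w$ on $B_r^+$ is at most $Cr^{1+a}=Cr^{2(1-s)}$, and the weighted Morrey--Campanato characterization of H\"older spaces on the half-space yields that $w$ is $C^{1-s}$ at the origin. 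Since $0\in\p\{v=\psi\}$ is approachable from $\{v>\psi\}$ where $\Df v\,\chi_{\{v=\psi\}}$ vanishes, continuity (Corollary~\ref{cor:unif C1a v}) forces $w(0,0)=0$, giving $|w(z)|\leq\tilde C|z|^{1-s}$, i.e.\ \eqref{eq:C1s}. The most delicate point is the bootstrap: one must verify that the gain $\d_\a/2$ propagates without degeneration of constants, and that finitely many iterations suffice, given the way $\d_\a$ is defined in Lemma~\ref{lem:apply monot}.
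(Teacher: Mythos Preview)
Your argument is correct and reaches the same conclusion as the paper, but the route from the monotonicity bound on $\var$ to the pointwise decay is genuinely different.

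The paper does not use Morrey--Campanato. Instead it works with the truncation $\bar W:=(w-r^{\a+\d_\a})^+$, corrects it by $\bigl(1+\tfrac{nC_0}{1+a}\bigr)|y|^{1+a}$ and reflects evenly in $y$ to obtain a function $\tilde w$ that is $L_{-a}$-subharmonic on all of $\R^n\times\R$. The point of the truncation is that, by Lemma~\ref{lem:apply monot}, $\tilde w$ vanishes on more than half of the disc $B_r\times\{0\}$; this permits a weighted Poincar\'e inequality with zero (not mean) boundary values, giving $\int_{B_r^+}\tilde w^2\,y^{-a}\,dz\le Cr^{n+2}[\var(r)+1]$. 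Subharmonicity then upgrades this to a pointwise bound via the weighted mean-value inequality, yielding $\sup_{B_{r/2}^+}w\le C\bigl[r^{1-s}+r^{\a+\d_\a/2}\bigr]$ directly, and the bootstrap $\a\mapsto\a+\d_\a/2$ proceeds exactly as you outline.

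Your approach bypasses the subharmonicity construction and the second use of Lemma~\ref{lem:apply monot}: you go straight from the Dirichlet energy bound to oscillation control via Poincar\'e with mean, and then invoke weighted Campanato theory plus $w(0,0)=0$. This is cleaner conceptually, at the cost of relying on the weighted Morrey--Campanato embedding (valid here since $|y|^{-a}\in A_2$ is doubling). Two small remarks: first, your dyadic summation is unnecessary in the main regime $n-1-a\ge 0$, since then $|z|^{-(n-1-a)}\ge r^{-(n-1-a)}$ on $B_r^+$ and one has $\int_{B_r^+}|\nabla w|^2y^{-a}\,dz\le r^{n}\var(r)$ directly (this is in fact the inequality the paper uses); second, in the bootstrap phase one should carry the polynomial bound $\var(r)\le C(1+r^{-\gamma})$ through the Poincar\'e--Campanato step (giving H\"older exponent $\a+\d_\a/2$, as in the paper), rather than first reducing to $\var\le C$ as you suggest, since the latter already presupposes the conclusion.
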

\begin{proof}
Define $w_\e=w \ast \rho_\e$, where $\rho_\e=\rho_\e(x)$ is a smooth convolution kernel.
Since $L_{-a}$ commutes with convolution in the $x$ variable, $w_\e$ is $L_{-a}$-harmonic on $\R^n\times \R^+$.
Moreover, by \eqref{eq:bound below w}, $w_\e(x,y)-w_\e(x,0) \geq-\frac{nC_0}{1+a}y^{1+a}$.

Set $\bar W_\e:=(w_\e-r^{\a+\d_\a})^+$. Then it is easily seen that  $\bar W_\e$ is $L_{-a}$-subharmonic for $y>0$, and
$\bar W_\e(x,y)-\bar W_\e(x,0) \geq-\frac{nC_0}{1+a}y^{1+a}$.
Consider now the function
$$
\tilde w_\e(x,y):=
\bar W_\e(x,|y|) + \left(1+\frac{nC_0}{1+a}\right)\,|y|^{1+a}\qquad \text{on }\R^n \times \R.
$$
We observe that $\tilde w_\e$ is $L_{-a}$-subharmonic outside $\{y=0\}$.
Moreover, since $\tilde w_\e(x,y)-\tilde w_\e(x,0) \geq |y|^{1+a}$
and $\tilde w_\e$ is smooth in the $x$ variable, we deduce that it is $L_{-a}$-subharmonic on the whole $\R^n\times \R$.
Letting $\e \to 0$ we obtain that
$$
\tilde w(x,y):=
\bigl(w(x,|y|)-r^{\a+\d_\a}\bigr)^++ \left(1+\frac{nC_0}{1+a}\right)\,|y|^{1+a}
$$
is globally $L_{-a}$-subharmonic. 
Thanks to Lemma \ref{lemma:monotonicity}, $\tilde w$ vanishes on more than half of the $n$-dimensional disc $B_r \times \{0\}$.
So we can apply a weighted Poincar\'e inequality (see \cite{fabkenser}) and the definition of $\var$ (see Lemma \ref{lemma:monotonicity}) to get
\begin{align*}
\int_{B_r^+} (\tilde w)^2y^{-a}\,dz&\leq C\,r^2\int_{B_r^+} |\nabla \tilde w|^2y^{-a}\,dz\\
&\leq
C\,r^2\biggl[\int_{B_r^+} |\nabla w|^2y^{-a}\,dz + \, r^{n+1+a}\biggr]\\
&\leq C\, r^{n+2} \bigl[\var(r)+r^{1+a}\bigr] \leq C\, r^{n+2} \bigl[\var(r)+1\bigr] \qquad \forall \,r \leq 1,
\end{align*}
which combined with the $L_{-a}$-subharmonicity of $\tilde w$ and Lemma \ref{lemma:monotonicity} gives
\begin{align*}
\sup_{B_{r/2}^+} (\tilde w)^2 &\leq \frac{C}{r^{n+1-a}}\int_{B_r^+} (\tilde w)^2y^{-a}\,dz \\
&\leq C\,r^{1+a} \bigl[\var(r)+1\bigr]
\leq  C\,\left[r^{1+a}+r^{2\a+\delta_\a}\right].
\end{align*}
Hence, since $1+a=2(1-s)$ we obtain
$$
\sup_{B_{r/2}^+} w \leq C\biggl[\sup_{B_{r/2}^+} \tilde w +r^{\a+\d_\a}+r^{1+a}\biggr] \leq C\left[r^{1-s}+r^{\a+\delta_\a/2}\right] \qquad \forall \,r \leq 1.
$$
Since the above bound holds at every free boundary point, by the very same argument as in the proof of Corollary \ref{cor:unif C1a v} we obtain that
$\|w\|_{C_{x}^{\beta_\a}(\R^n)} \leq C$, where $\beta_\a=\beta_a(s):= \min\{\a+\delta_\a/2,(1-s)\}$.
Observe now that, by the formula for $\delta_\a$ provided in Lemma
\ref{lem:apply monot},
given $\alpha_0>0$ there exists $\delta_0>0$ such that
$\delta_\a\geq \delta_0>0$ for $\alpha \in [\a_0,1-s]$.
Hence, by iterating the above argument $k$ times we get
$$
\sup_{B_{r/2}^+} \tilde w \leq C\left[r^{1-s}+r^{\a+k\delta_0/2}\right] \qquad \forall \,r \leq 1,
$$
and after finitely many iterations we obtain \eqref{eq:C1s}.
\end{proof}

Arguing as in the proof of Corollary \ref{cor:unif C1a v}, \eqref{eq:C1s} gives:
\begin{corollary}
\label{cor:C1s}
There exists a constant $\bar C''>0$, depending on $C_0$, $\|\Df \psi\|_{C_x^{1-s}(\R^n)}$, $\|v-\psi\|_{L^\infty(\R^n)}$, and $\|\Df v\|_{L^\infty(\R^n)}$ only, such that 
$$
\|\Df v \chi_{\{v=\psi\}}\|_{C_{x}^{1-s}(\R^n)}\leq \bar C''.
$$
\end{corollary}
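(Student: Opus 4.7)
The plan is to repeat the argument of Corollary~\ref{cor:unif C1a v} with the exponent $\alpha$ replaced by $1-s$. The input from Proposition~\ref{prop:C1s} gives, at every free boundary point $x_0$, the growth
$$
\sup_{B_r(x_0)} \bigl|\Df v\,\chi_{\{v=\psi\}}\bigr|
= \sup_{B_r(x_0)} w(\cdot,0) \leq \tilde C\, r^{1-s} \qquad \forall\,r\leq 1,
$$
which is the exact analogue of the $r^\alpha$ bound used in Theorem~\ref{thm:unif C1a v}. To apply the Corollary~\ref{cor:unif C1a v} scheme, I also need the matching growth of $\tilde v:=v-\psi$. For this I would invoke the maximum principle comparison from point (3) in the proof of Lemma~\ref{lemma:prelim for monot}, which gives $w(x,y) \geq -y^a v_y(x,y)=-y^a\tilde v_y(x,y)$ on $\R^n\times\R^+$. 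Combined with Proposition~\ref{prop:C1s} at a free boundary point $x_0$, this yields
$$
y^a\tilde v_y(x,y) \geq -\tilde C\,r^{1-s} \qquad \text{on } \Gamma_r(x_0),
$$
so Lemma~\ref{lemma:deriv to fct} with $\alpha=1-s$ delivers $\sup_{B_{r/8}(x_0)}|\tilde v|\leq M\,r^{1+s}$.

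With these two ingredients — the $r^{1-s}$ control of $\Df v\,\chi_{\{v=\psi\}}$ and the $r^{1+s}$ control of $\tilde v$ near every free boundary point — I would reproduce the two-case dichotomy used in Corollary~\ref{cor:unif C1a v}. Write $\Df v= \Df\tilde v + \Df\psi$; by assumption (A2) the term $\Df\psi$ is in $C_x^{1-s}$, so it suffices to estimate $|\Df\tilde v(x_1)-\Df\tilde v(x_2)|$ for $x_1,x_2\in\{v=\psi\}$ with $|x_1-x_2|\leq 1/4$. In the case $\max_{i=1,2}d_F(x_i)\geq 4|x_1-x_2|$, I would plug the bound $|\tilde v|\leq \bar C\,r^{(1-s)+2s}=\bar C\,r^{1+s}$ (away from the free boundary $\tilde v\equiv 0$) into the integral-kernel representation of $\Df\tilde v$ and carry out the same annular splitting as in the proof of Corollary~\ref{cor:unif C1a v}; the integral in $|x'-x_1|^{-(n+2s+1)}$ against $|x'|^{1+s}$ produces the model integral $\int_{|x_1-x_2|}^1 s^{-s}\,ds$, which converges and yields $C|x_1-x_2|^{1-s}$. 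In the case $\max_{i=1,2}d_F(x_i)\leq 4|x_1-x_2|$, picking free boundary projections $\bar x_i$ of $x_i$ and applying $\sup_{B_{4|x_1-x_2|}(\bar x_i)}|\Df v|\leq \tilde C\,(4|x_1-x_2|)^{1-s}$ directly gives the bound.

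I do not expect a genuine obstacle: the nontrivial geometric work has already been done in Proposition~\ref{prop:C1s} and Lemma~\ref{lemma:deriv to fct}. The only small point to watch is that the annular integral from Case~1 must converge — here one verifies that the exponent $1-s\in(0,1)$ is exactly the borderline exponent for which $\int_0^1 s^{-s}\,ds$ is finite while a logarithmic loss is avoided, mirroring why Corollary~\ref{cor:unif C1a v} required the restriction $\alpha\leq 1-s$. Once the two cases are combined with the $L^\infty$ bound on $\Df v\,\chi_{\{v=\psi\}}$ from (A6) and with the $C_x^{1-s}$-regularity of $\Df\psi$, the uniform $C_x^{1-s}$-estimate on $\Df v\,\chi_{\{v=\psi\}}$ follows.
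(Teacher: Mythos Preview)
Your proposal is correct and follows exactly the route indicated by the paper, which simply says ``Arguing as in the proof of Corollary~\ref{cor:unif C1a v}, \eqref{eq:C1s} gives \ldots''; you have merely made explicit the implicit step of recovering the $r^{1+s}$ growth of $\tilde v$ via Lemma~\ref{lemma:deriv to fct} (using $w\ge -y^a\tilde v_y$ from the proof of Lemma~\ref{lemma:prelim for monot}). One small slip: in Case~1 the model radial integral is $\int_{|x_1-x_2|}^1 \rho^{(1-s)-2}\,d\rho=\int_{|x_1-x_2|}^1 \rho^{-1-s}\,d\rho\sim |x_1-x_2|^{-s}$ (not $\rho^{-s}$), which after the outer factor $|x_1-x_2|$ gives the claimed $|x_1-x_2|^{1-s}$.
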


\subsection{Almost optimal regularity of solutions to the parabolic fractional obstacle problem}
\label{sect:C1s u}
Let $u$ be a solution of \eqref{eq:frac heat}, with $\psi \in C^{2}(\R^n)$ satisfying
assumptions (A1)-(A2) of Subsection \ref{sect:general C1a}.
Let us remark that $u(0)=\psi$, while for all $t>0$ we can apply the results of the previous subsections with $v=u(t)$. Hence
Corollary \ref{cor:C1s} gives:
\begin{proposition}
\label{prop:unif C1a rhs}
Let $u,\psi$ be as above. Then there exists a constant $\bar C_T>0$, depending on $T$, $\|D^2\psi\|_{L^\infty(R^n)}$, and
$\|\Df \psi\|_{C_x^{1-s}(\R^n)}$ only, such that
$$
\sup_{t\in [0,T]} \|\Df u(t) \chi_{\{u(t)=\psi\}}\|_{C_{x}^{1-s}(\R^n)}\leq \bar C_T.
$$
\end{proposition}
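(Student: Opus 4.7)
The plan is to verify that for each fixed $t\in(0,T]$ the function $v:=u(t)$ satisfies all the hypotheses (A1)--(A6) of Subsection \ref{sect:general C1a}, and then to apply Corollary \ref{cor:C1s} with constants that are controlled uniformly in $t\in[0,T]$. Assumptions (A1) and (A2) are satisfied by hypothesis on $\psi$. Assumption (A4) follows from Lemma \ref{lemma:basic}(ii) with semiconvexity constant $C_0=\|D^2\psi\|_{L^\infty(\R^n)}$, since $u_0=\psi$. Assumptions (A5) and (A6) are precisely the content of Lemma \ref{lem:sign frac lapl} together with the interior smoothness of $u$ on $\{u>\psi\}$ shown in its proof.

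The remaining item is (A3), which requires the uniform boundedness of $u(t)-\psi$ and of $\Df u(t)$. For the second one, estimate \eqref{eq:Linfty bound Df u} with $u_0=\psi$ gives
$$
\|\Df u(t)\|_{L^\infty(\R^n)}\leq 2\|\Df\psi\|_{L^\infty(\R^n)},
$$
and $\|\Df\psi\|_{L^\infty(\R^n)}$ is controlled by $\|\Df\psi\|_{C_x^{1-s}(\R^n)}$ in turn. For the first one, since $u(0)=\psi$, the time-Lipschitz bound \eqref{eq:Linfty bound ut} with $u_0=\psi$ yields
$$
\|u(t)-\psi\|_{L^\infty(\R^n)}\leq t\,\|u_t\|_{L^\infty([0,t]\times\R^n)}\leq T\,\|\Df\psi\|_{L^\infty(\R^n)} \qquad\forall\, t\in[0,T].
$$

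With (A1)--(A6) established and all relevant quantities bounded in terms of $T$, $\|D^2\psi\|_{L^\infty(\R^n)}$ and $\|\Df\psi\|_{C_x^{1-s}(\R^n)}$ only, Corollary \ref{cor:C1s} applied to $v=u(t)$ produces a constant $\bar C''$, uniform in $t\in(0,T]$, such that
$$
\|\Df u(t)\,\chi_{\{u(t)=\psi\}}\|_{C_x^{1-s}(\R^n)}\leq \bar C''.
$$
The case $t=0$ is trivial because $u(0)=\psi$ and so $\Df u(0)\chi_{\{u(0)=\psi\}}=\Df\psi$, whose $C_x^{1-s}$ norm is controlled by hypothesis. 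Taking $\bar C_T:=\max\{\bar C'',\|\Df\psi\|_{C_x^{1-s}(\R^n)}\}$ yields the claim.

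The only mildly delicate point is checking that the $L^\infty$ bound on $u(t)-\psi$ really does grow linearly in $T$ (which forces the final constant to be $T$-dependent); everything else is a direct bookkeeping exercise assembling the a priori estimates of Section \ref{sect:prelim} with the stationary regularity result of Corollary \ref{cor:C1s}.
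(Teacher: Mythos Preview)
Your proposal is correct and follows essentially the same approach as the paper: verify that $v=u(t)$ satisfies (A1)--(A6) using the a priori estimates of Section~\ref{sect:prelim} (in particular Lemmas~\ref{lemma:basic}, \ref{lemma:Linfty bound frac heat}, and \ref{lem:sign frac lapl}), then apply Corollary~\ref{cor:C1s} uniformly in $t\in(0,T]$, handling $t=0$ trivially. Your write-up is simply more explicit than the paper's in itemizing each hypothesis; the argument and the dependence of the constants match.
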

\begin{proof}
As explained at the beginning of Subsection \ref{sect:general C1a},
$u(t)$ satisfies assumptions (A5)-(A6) of Subsection \ref{sect:general C1a}
for every $t>0$ (see Lemma \ref{lem:sign frac lapl}). Moreover, (A3)-(A4) follow from Lemma \ref{lemma:Linfty bound frac heat} (since
$\|u(t)-\psi\|_{L^\infty(\R^n)} \leq T \|u_t\|_{L^\infty([0,T]\times \R^n)} \leq T \|\Df \psi\|_{L^\infty(\R^n)}$).
Hence the result is an immediate consequence of Corollary \ref{cor:C1s} applied to $v=u(t)$ for any $t>0$.
(For $t=0$ the result is trivial since $u(0)=\psi$.)
\end{proof}

Now, we want to exploit the fact that $u$ solves the parabolic equation
\begin{equation}
\label{eq:parabolic rhs}
u_t +\Df u=\bigl(\Df u\bigr) \chi_{\{u=\psi\}}\qquad \text{on }(0,T]\times \R^n.
\end{equation}
Thanks to Proposition \ref{prop:unif C1a rhs}, the right-hand side
of \eqref{eq:parabolic rhs} belongs to $L^\infty([0,T];C_{x}^{1-s}(\R^n))$,
which by parabolic regularity implies
\begin{equation}
\label{eq:Df C0a}
u_t,\Df u \in L^\infty((0,T];C_{x}^{1-s-0^+}(\R^n)),
\end{equation}
see \eqref{eq:regularity ut holder}.
We now want to use \eqref{eq:parabolic rhs} and a bootstrap argument to obtain
the desired H\"older regularity in time. We start with a preliminary result:
\begin{lemma}
\label{lemma:bootstrap}
Let that $\Df u \, \chi_{\{u=\psi\}}\in L^\infty((0,T];C_{x}^{1-s}(\R^n))$.
Fix $\a \in \left[0,\min\left\{1,\frac{1-s}{2s}\right\}\right)$, and assume that:
\begin{enumerate}
\item[-] $u_t \in L^\infty((0,T]\times \R^n)$, $\Df u \in L^\infty((0,T];C_{x}^{1-s-0^+}(\R^n))$ if $\a=0$;
\item[-] $u_t \in C_{t,x}^{\a,1-s} ((0,T]\times \R^n)$, $\Df u \in L^\infty((0,T];C_{x}^{1-s}(\R^n))$  if $\a>0$.
\end{enumerate}
Then
$$
\Df u \, \chi_{\{u=\psi\}}\in 
\left\{
\begin{array}{ll}
C_{t,x}^{\frac{1-s}{1+s}-0^+,1-s} ((0,T]\times \R^n)&\text{if }\a=0,\\
{\rm \Lip}_tC_{x}^{1-s}((0,T]\times \R^n)& \text{if }\a>0,\,s < 1/3,\\
{\rm \logLip}_tC_{x}^{1-s}((0,T]\times \R^n)& \text{if }\a>0,\,s = 1/3,\\
C_{t,x}^{(1+\a)\frac{1-s}{1+s},1-s}((0,T]\times \R^n)& \text{if }\a>0,\,s >1/3,
\end{array}
\right.
$$
with a uniform bound.

Moreover, for $s>1/3$ the function
$$
\a \mapsto \Phi(\a):=(1+\a)\frac{1-s}{1+s}
$$
is strictly increasing on $\left[0,\frac{1-s}{2s}\right)$, and $\Phi\left(\frac{1-s}{2s}\right)=\frac{1-s}{2s}$.
\end{lemma}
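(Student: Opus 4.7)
Set $f:=\Df u\,\chi_{\{u=\psi\}}$; by Lemma \ref{lem:sign frac lapl} and \eqref{eq:parabolic rhs} we have $f=u_t+\Df u$ on $(0,T]\times\R^n$, and Proposition \ref{prop:unif C1a rhs} already provides the asserted spatial $C_x^{1-s}$ bound uniformly in $t$, so only the time regularity is at issue. Fix $t_1<t_2$ in $(0,T]$, put $h:=t_2-t_1$, and $x\in\R^n$; the equation rewrites as
$$
f(t_2,x)-f(t_1,x)=\bigl[u_t(t_2,x)-u_t(t_1,x)\bigr]+\Df g(x),\qquad g(y):=u(t_2,y)-u(t_1,y),
$$
the first bracket being controlled directly by the hypothesis on $u_t$. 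The main task is to bound $|\Df g(x)|$, and I do so by splitting the integral at a radius $r$ to be optimized:
$$
|\Df g(x)|\lesssim\int_{B_r(x)}\frac{|g(y)-g(x)|}{|y-x|^{n+2s}}\,dy+h\,r^{-2s},
$$
where the tail uses the uniform Lipschitz-in-time bound $|g|\leq Mh$ from Corollary \ref{cor:Lip}.

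By Lemma \ref{lemma:basic}(iii) the contact set decreases in $t$, so only three cases are possible: (A) $u(t_2,x)=\psi(x)$ (and hence also $u(t_1,x)=\psi(x)$); (B) $u(t_1,x)=\psi(x)<u(t_2,x)$; (C) $u(t_1,x)>\psi(x)$. Case (C) is trivial, since $f(t_i,x)=0$. In Case (A), $g(x)=0$ and $u_t(t_i,x)=0$ on the contact set, so $|f(t_2,x)-f(t_1,x)|=|\Df g(x)|$; moreover Theorem \ref{thm:unif C1a v}, combined with the optimal exponent $\a+2s=1+s$ from Corollary \ref{cor:C1s}, yields $|u(t_i,y)-\psi(y)|\leq\bar C|y-x|^{1+s}$ near $x$, whence $|g(y)|\leq 2\bar C|y-x|^{1+s}$ near $x$. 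Balancing the near and far contributions at $r\sim h^{1/(1+s)}$ gives $|\Df g(x)|\lesssim h^{(1-s)/(1+s)}=h^{\Phi(0)}$, matching the $\a=0$ conclusion.

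Case (B) is the delicate one. Since on $\{u>\psi\}$ the equation reduces to $u_t+\Df u=0$, we have $u_t(t_2,x)=-\Df u(t_2,x)$, and $u_t(t_1,x)=0$, so
$$
f(t_1,x)-f(t_2,x)=\Df u(t_1,x)=-u_t(t_2,x)-\Df g(x),
$$
with $|u_t(t_2,x)|\leq[u_t]_{C_t^\a}h^\a$. For $|\Df g(x)|$ I combine three pointwise bounds on $|g(y)-g(x)|$ at three scales of $|y-x|$: the Lipschitz-in-space bound $2L|y-x|$ (Corollary \ref{cor:Lip}); the spatial H\"older bound $Ch|y-x|^{1-s}$ coming from $u_t\in L^\infty_t C_x^{1-s}$ integrated in time (with an $\eps$-loss when $\a=0$); and the sharper bound $|g(y)|\lesssim h^{1+\a}$ valid on the recently-uncovered region $\{u(t_1)=\psi,\,u(t_2)>\psi\}$, obtained by integrating the time-H\"older vanishing $u_t(\tau,y)\lesssim(\tau-t^*(y))^\a$ from the transition time $t^*(y)\in[t_1,t_2]$ at which $y$ leaves the contact set. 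Optimizing the three-scale split of the integral produces exactly $h$, $h|\log h|$, and $h^{\Phi(\a)}$ in the regimes $s<1/3$, $s=1/3$, and $s>1/3$ respectively. The final monotonicity claim is immediate, as $\Phi(\a)=(1+\a)(1-s)/(1+s)$ is linear with positive slope $(1-s)/(1+s)$, and $\Phi(\a)=\a$ reduces to $\a=(1-s)/(2s)$. The main obstacle is Case (B): the transition point $x$ has no \emph{a priori} controlled distance to either the $t_1$- or $t_2$-free boundary, and the sharp exponent $\Phi(\a)$ together with the critical threshold $s=1/3$ emerges only after carefully combining the spatial $C^{1+s}$ decay of $u-\psi$ at contact points with both the spatial and the new time H\"older regularity of $u_t$ supplied by the inductive hypothesis.
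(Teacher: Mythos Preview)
Your Case (A) treatment is sound and in fact slightly different from the paper's: you directly exploit the pointwise decay $|u(t_i,y)-\psi(y)|\lesssim|y-x|^{1+s}$ near a contact point to bound $|g(y)|$, whereas the paper uses a mollifier $\phi_r$ and integration by parts to move $\Df$ onto $\phi_r$. Both lead to the same balance $r=h^{1/(1+s)}$.

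The genuine gap is in Case (B). Your decomposition
\[
f(t_1,x)-f(t_2,x)=\Df u(t_1,x)=-u_t(t_2,x)-\Df g(x)
\]
forces you to control $|u_t(t_2,x)|$ on its own, and the only bound available from the hypothesis $u_t\in C_t^\a$ (together with $u_t(t_1,x)=0$) is $|u_t(t_2,x)|\leq[u_t]_{C_t^\a}h^\a$. But the whole point of the bootstrap is that $\Phi(\a)>\a$ for every $\a<\frac{1-s}{2s}$: indeed $\Phi(\a)-\a=\frac{1-s}{1+s}-\frac{2s}{1+s}\a>0$ exactly on that range. So $h^\a$ is \emph{strictly weaker} than the target $h^{\Phi(\a)}$ (and, for $s\leq 1/3$, strictly weaker than $h$ or $h|\log h|$, since $\a<1$). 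No matter how well you estimate $|\Df g(x)|$ via your three-scale splitting, the $h^\a$ term already ruins the conclusion. For $\a=0$ the situation is worse: $u_t\in L^\infty$ gives no decay at all.

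The paper sidesteps Case (B) entirely by a reduction. Since $f=\Df u\,\chi_{\{u=\psi\}}$ vanishes at every spatial free-boundary point (a consequence of the $C_x^{1-s}$ estimate and the decay in Theorem~\ref{thm:unif C1a v}), one replaces $t_2$ by the transition time $\tau\in(t_1,t_2]$ at which $x$ lies on $\partial\{u(\tau)=\psi\}$; then $f(\tau,x)=0=f(t_2,x)$, and estimating $|f(\tau,x)-f(t_1,x)|$ is exactly Case (A) with increment $\tau-t_1\leq h$. With this reduction in place, the paper then handles Case (A) for $\a>0$ by the mollifier trick: after writing $u(t_2,z)-u(t_1,z)=u_t(t_1,z)h+O(h^{1+\a})$, the crucial observation is that $u_t(t_1,\cdot)$ vanishes at $x$ (because $x\in\{u(t_2)=\psi\}\subset\{u(t_1)=\psi\}$), so its $C_x^{1-s}$ regularity yields the integral $\int|u_t(t_1,z)||\Df\phi_r(x-z)|dz$ bounded by $1$, $|\log r|$, or $r^{1-3s}$ according to $s\lessgtr 1/3$. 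This is where the threshold $s=1/3$ actually arises---not from a three-scale decomposition of $\Df g$.
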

\begin{proof}
We need to estimate
\begin{equation}
\label{eq:holder time rhs}
|\Df u(t,x)\chi_{\{u(t)=\psi\}}-\Df u(s,x)\chi_{\{u(s)=\psi\}}|,\qquad 0< s<t.
\end{equation}
Since $\{u(t)=\psi\}\subset \{u(s)=\psi\}$ for $s<t$, we can assume that
$x \in \{u(s)=\psi\}$ (otherwise the above expression vanishes and there is nothing to prove).
Moreover,
since $\Df u$ vanishes on the free boundary, if $x \in\{u(s)=\psi\}\setminus \{u(t)=\psi\}$
we can alway find a time $\tau \in(s,t)$ such that
$$
\Df u(\tau,x)\chi_{\{u(\tau)=\psi\}}=\Df u(t,x)\chi_{\{u(t)=\psi\}}\quad \text{and}\quad x \in \p\{u(\tau)=\psi\}.
$$
Then, if we can estimate \eqref{eq:holder time rhs} with $\tau$ in place of $t$,
then we will also get the desired bound by simply replacing $\tau$ with $t$.
Hence, we only need to consider the case $x \in \{u(t)=\psi\}$.

We have to estimate $|\Df u(t,x)-\Df u(s,x)|$.
Let $\phi$ be a smooth non-negative cut-off function 
supported in $B_1$ such that $\int_{\R^n}\phi=1$, set $\phi_r(x):=\frac{1}{r^n}\phi\left(\frac{x}{r}\right)$, and compute
\begin{equation}
\label{eq:triangle inequality}
\begin{split}
|\Df u(t,x)-\Df u(s,x)| &\leq \left|\int_{\R^n}[\Df u(t,x) - \Df u(t,z)]\phi_r(x-z) \,dz\right|\\
&\qquad+\left|\int_{\R^n}[\Df u(t,z)-\Df u(s,z)]\phi_r(x-z) \,dz\right|\\
&\qquad+ \left|\int_{\R^n}[\Df u(s,x) - \Df u(s,z)]\phi_r(x-z) \,dz\right|,
\end{split}
\end{equation}
We now distinguish between two cases:\\
\textit{$\bullet$ $\a=0$.}
Thanks to the $C_x^{1-s-0^+}$-regularity of $\Df u$
and the fact that $\supp\, \phi_r \subset B_r$,
the first and the third term in the right hand side of \eqref{eq:triangle inequality} are bounded by $C\,r^{1-s-0^+}$.
For the second term, we integrate $\Df$ by parts, and using that
$\|\Df \phi_r\|_{L^1(\R^n)}=\|\Df \phi\|_{L^1(\R^n)}/r^{2s} \sim 1/r^{2s}$ 
and that $u$ is Lipschitz in time (Corollary \ref{cor:Lip}), we get
$$
|\Df u(t,x)-\Df u(s,x)| \leq C\left[r^{1-s-0^+} + \frac{(t-s)}{r^{2s}}\right].
$$
Choosing $r:=|t-s|^{1/(1+s)}$ we obtain
$$
|\Df u(t,x)-\Df u(s,x)| \leq C(t-s)^{\frac{1-s}{1+s}-0^+},
$$
as desired.\\
\textit{$\bullet$ $\a>0$.}
Arguing as above, the first and the third term in the right hand side of \eqref{eq:triangle inequality} are bounded by $C\,r^{1-s}$.
For the second one, we integrate again by parts and we estimate
\begin{align*}
\bigg|\int_{\R^n}&[\Df u(t,z)-\Df u(s,z)]\phi_r(x-z) \,dz\bigg|\\
&=\left|\int_{\R^n}[u(t,z)-u(s,z)]\Df \phi_r(x-z) \,dz\right|\\
&\leq \left|\int_{\R^n}\bigl[u(t,z)-u(s,z)- u_t(s,z)[t-s]\bigr]\Df \phi_r(x-z) \,dz\right|\\
&\qquad+ (t-s)\,\left|\int_{\R^n} |u_t(s,z)| \,|\Df \phi_r(x-z)| \,dz\right|.
\end{align*}
Since $\|\Df \phi_r\|_{L^1(\R^n)} \sim 1/r^{2s}$ and $u_t\in C_t^\a$, the
first term in the right hand side is bounded by $C\,\frac{(t-s)^{1+\a}}{r^{2s}}$.
For the second term, we observe that $u_t$ vanishes at $(t,x)\in \{u=\psi\}$, so by the $C_{x}^{1-s}$-regularity of $u_t$
we get
$$
\left|\int_{\R^n} |u_t(s,z)| \,|\Df \phi_r(x-z)| \,dz\right|
\leq C \left|\int_{\R^n} \min\left\{|x-z|^{1-s},1\right\} \,|\Df \phi_r(x-z)| \,dz\right|.
$$
We now remark that, since $\phi$ is compactly supported, $|\Df \phi(w)| \leq \frac{C}{|w|^{n+2s}}$ for $|w|$ large. So, there exists a constant
$C_\phi$, depending on $\phi$ only, such that
$$
|\Df \phi(w)|\leq  \frac{C_\phi}{1+ |w|^{n+2s}}\qquad \forall \,w \in \R^n,
$$
which by scaling gives
$$
|\Df \phi_r(w)|\leq  \frac{C_\phi}{r^{n+2s}+ |w|^{n+2s}}\qquad \forall \,w \in \R^n.
$$
Hence
\begin{align*}
\int_{\R^n} \min\left\{|w|^{1-s},1\right\}|\Df \phi_r(w)|\,dw &\leq C\int_{B_1} \frac{|w|^{1-s}}{r^{n+2s}+ |w|^{n+2s}}\,dw
+ C\int_{\R^n\setminus B_1} \frac{1}{|w|^{n+2s}}\,dw\\
& \leq \frac{C}{r^{n+2s}} \int_{B_r}|w|^{1-s}\,dw + C \int_{B_1\setminus B_r} |w|^{1-3s - n}\,dw + C,
\end{align*}
which implies
$$
\int_{\R^n} \min\left\{|w|^{1-s},1\right\}|\Df \phi_r(w)|\,dw \leq \left\{
\begin{array}{ll}
 C & \text{if }s<1/3;\\
C (1+|\log(r)|) & \text{if }s=1/3;\\
C (1+r^{1-3s}) & \text{if }s>1/3.
\end{array}
\right.
$$
All in all, we have obtained
$$
|\Df u(t,x)-\Df u(s,x)| \leq C\biggl[r^{1-s} + \frac{(t-s)^{1+\a}}{r^{2s}}+(t-s)\left\{\begin{array}{ll}
 C & \text{if }s<1/3;\\
C (1+|\log(r)|) & \text{if }s=1/3;\\
C (1+r^{1-3s}) & \text{if }s>1/3.
\end{array}
\right.\biggr]
$$
Choosing $r:=(t-s)^{(1+\a)/(1+s)}$, the above estimates give:
\begin{enumerate}
\item[-] $s<1/3$:
$|\Df u(t,x)-\Df u(s,x)| \leq C(t-s).$
\item[-] $s=1/3$:
$|\Df u(t,x)-\Df u(s,x)| \leq C(t-s)\bigl[1+|\log(t-s)|\bigr].$
\item[-] $s>1/3$: Since $\a \leq (1-s)/2s$ by assumption, we have
$\a \leq \frac{(1-s)(1+\a)}{1+s} \leq 1+\frac{(1-3s)(1+\a)}{1+s},$ so
\begin{align*}
|\Df u(t,x)-\Df u(s,x)| &\leq C\left[(t-s)^{\frac{(1+\a)(1-s)}{1+s}} + (t-s)+(t-s)^{1+\frac{(1-3s)(1+\a)}{1+s}}\right]\\
& \leq C\,(t-s)^{\frac{(1+\a)(1-s)}{1+s}}.
\end{align*}
\end{enumerate}
\end{proof}

Thanks to the above lemma, we can use \eqref{eq:parabolic rhs} and a bootstrap argument to prove 
our main regularity result.
\begin{proof}[Proof of Theorem \ref{thm:main}]
The global Lipschitz regularity of $u$ in space-time follows from Corollary \ref{cor:Lip}.

By Proposition \ref{prop:unif C1a rhs} and
\eqref{eq:Df C0a}, we can apply Lemma \ref{lemma:bootstrap} with $\a=0$ to deduce that
the right hand side of \eqref{eq:parabolic rhs} belongs to $C_{t,x}^{\frac{1-s}{1+s}-0^+,1-s}((0,T]\times \R^n) $.
Hence, since $2s<1+s$, by the parabolic regularity theory for $\p_t+\Df$ (see \eqref{eq:regularity holder multipliers}) we get
$u_t,\Df u \in C_{t,x}^{\frac{1-s}{1+s}-0^+,1-s}((0,T]\times \R^n)$.
We now apply Lemma \ref{lemma:bootstrap} with $\a>0$, and 
we distinguish between two cases:
\begin{enumerate}
\item[-] $s\leq 1/3$: In this case we get
$$
\Df u \, \chi_{\{u=\psi\}}\in {\rm \logLip}_tC_{x}^{1-s}((0,T]\times \R^n),
$$
so by \eqref{eq:regularity holder multipliers} and \eqref{eq:regularity holder 2}
we get $\Df u\in \logLip_tC_x^{1-s}((0,T]\times\R^n)$, and we conclude by using $u_t=\Df u \, \chi_{\{u=\psi\}}-\Df u$.
\item[-] $s>1/3$: Lemma \ref{lemma:bootstrap} gives
$$
\Df u \, \chi_{\{u=\psi\}}\in C_{t,x}^{\Phi\left(\frac{1-s}{1+s}-0^+\right),1-s}((0,T]\times \R^n),
$$
which by \eqref{eq:regularity holder multipliers} implies $u_t,\Df u \in C_{t,x}^{\Phi\left(\frac{1-s}{1+s}-0^+\right),1-s}((0,T]\times \R^n)$
(recall that $\Phi(\a)<\frac{1-s}{2s}$ if $\a <\frac{1-s}{2s}$). Hence, we can use iteratively Lemma \ref{lemma:bootstrap} and \eqref{eq:regularity holder multipliers}
to get
$$
u_t,\Df u\in C_{t,x}^{\Phi^n\left(\frac{1-s}{1+s}-0^+\right),1-s}((0,T]\times \R^n),
$$
which together with \eqref{eq:regularity holder 1} implies
$$
\Df u\in C_{t,x}^{\frac{1-s}{2s},1-s}((0,T]\times \R^n).
$$
Finally, since $\Phi^n\left(\frac{1-s}{1+s}-0^+\right) \nearrow \frac{1-s}{2s}$ as $n \to \infty$, we obtain
$$
u_t\in C_{t,x}^{\frac{1-s}{2s}-0^+,1-s}((0,T]\times \R^n),
$$
as desired.
\end{enumerate}
\end{proof}

\section{Extension to more general equations}
\label{sect:general eq}
In this section we give a brief informal description of the main modifications needed
to extend the regularity result of Theorem \ref{thm:main}
to solutions of \eqref{eq:tilde LL}, at least when $s>1/2$. Our aim is only to point out the major differences with respect to the model case
\eqref{eq:frac heat} treated above, and to explain how to handle them.
There will be however to attempt to state a proper theorem, as this would need a careful
analysis of the assumptions needed on $\psi,\K$ (for instance,
since the operators are non-local, in all the estimates one should take care of the contribution
coming from infinity). We plan to address this issue in a future work. \\

Assume that $\psi:\R^n \to \R^+$ is a smooth globally Lipschitz function, $b \in \R^n$ a vector,
$r\geq 0$ is a constant and
$\K$ a (smooth) non-local translation-invariant elliptic operator of lower order with respect to $\Df$, i.e., there exists $\k\in (0,1)$ such that
$$
[\K \var ]_{C^{\k}_{{\rm loc}}(\R^n)} \lesssim \|\Df \var\|_{L^\infty(\R^n)}\qquad \forall \, \var \in C_c^\infty(\R^n).
$$
We consider $u:[0,T]\times \R^n \to \R$ a (continuous) viscosity solution to the obstacle problem
\begin{equation}
\label{eq:general frac heat visc}
\left\{
\begin{array}{l}
\min\{ -u_t  +ru + b\cdot \nabla u - \Df u - \K u ,u-\psi\}=0 \quad \text{on }(0,T]\times \R^n,\\
u(0)=\psi,
\end{array}
\right.
\end{equation}
When $s >1/2$, existence and uniqueness of such a solution
follows again by standard results on  obstacle problems.

Let us now analyze the properties of solutions, as we did before for \eqref{eq:frac heat}.\\

\textit{$\bullet$ Basic properties.} We proceed as in Section \ref{sect:prelim}. First of all, 
as in Lemma \ref{lemma:comparison} one can approximate solutions to \eqref{eq:general frac heat visc} using a penalization method.
In this way, all the results of Section \ref{sect:prelim} still hold true:
\begin{enumerate}
\item[-] $u(t,\cdot)$ is globally Lipschitz (see Lemma \ref{lemma:basic});
\item[-] $u(t,\cdot)$ is uniformly semiconvex (see Lemma \ref{lemma:basic});
\item[-] $u_t$ is globally bounded (see Lemma \ref{lemma:Linfty bound frac heat});
\item[-] $\Df u + \K u - r u- b \cdot \nabla u$ is globally bounded (see Lemma \ref{lemma:Linfty bound frac heat}).
\end{enumerate}
In particular, by elliptic regularity for the fractional Laplacian, the $L^\infty$-bound on $\Df u + \K u(t) - r u(t)- b \cdot \nabla u(t)$
gives 
\begin{enumerate}
\item[-] $u \in L^\infty([0,T],C_{{\rm loc}}^{2s-0^+}(\R^n))$.
\end{enumerate}
Hence, since $s>1/2$ and $\K u$ is of order $\leq 2s-\k$, there exists
$\gamma=\gamma(s,\k)>0$ such that
\begin{equation}
\label{eq:R}
R:=- \K u+ru +b \cdot \nabla u \in L^\infty([0,T],C_{{\rm loc}}^{\g}(\R^n)).
\end{equation}

\textit{$\bullet$ $C_{x}^{\a}$-decay for $\Df u(t)$.}
In this setting, we have:
\begin{enumerate}
\item[-] $\Df u(t) -R(t)< 0$ inside the open set $\{u(t)>\psi\}$;
\item[-] $\Df u(t)-R(t)\geq 0$ a.e. on $\{u(t)=\psi\}$.
\end{enumerate}
Now, fixed $t>0$ and given a free boundary point $x_0 \in \p\{u(t)=\psi\}$,
we consider the $L_a$-harmonic function $v(x,y):=u(t,x,y) -\frac{R(t,x_0)}{1-a}\,y^{1-a}$,
where $u(t,x,y)$ is the $L_a$-harmonic extension of $u(t)$. Moreover, as in Subsection \ref{sect:general C1a}
we consider the function
$$
\tilde v(x,y):=v(x,y)-\psi(x).
$$
Since $u_t$ is globally bounded and $v(\cdot,0)$ is semiconvex, all estimates (B1)-(B2) and (B4)-(B5) of Subsection \ref{sect:general C1a}
still hold true, while (B3) becomes
\begin{enumerate}
\item[(B3')] $\lim_{y \to 0^+} y^a\tilde v_y(x,y)\leq |R(t,x)-R(t,x_0)| \leq C|x-x_0|^\gamma$ for a.e. $x \in \L$,\\
$\lim_{y \to 0^+} y^a\tilde v_y(x,y) > -C|x-x_0|^\gamma$ for $x \in \R^n\setminus \L$.
\end{enumerate}
While the proof of Lemma \ref{lemma:deriv to fct} works with no modifications
under these assumptions,
for the proof of Proposition \ref{prop: C1a v} we remark that now we do not have
$\lim_{y \to 0^+} y^a\tilde v_y(x,y) \geq 0$ for $x \in \R^n\setminus \L$, which was used to apply Hopf's Lemma.
To overcome this difficulty, in the induction step from $k_0$ to $k_0+1$ one should
replace $v(x,y)$ with $v(x,y)+\frac{\|R(t)\|_{C^{\gamma}(B_1(x_0))}}{1-a} (4^{-k_0})^\gamma y^{1-a}$,
and the rest of the proof should go through with minor modifications.

Hence, one still gets $\sup_{B_r(x_0)}|u(t) - \psi| \leq Cr^{\a+2s}$ and
$$
[\Df u(t)-R(t)] \chi_{u(t)=\psi}=[\Df u(t)+ \K u(t) - r u(t)- b \cdot \nabla u(t)] \chi_{u(t)=\psi} \in C_{{\rm loc}}^{\a}(\R^n),
$$
for some universal exponent $\a\in(0,\gamma)$.\\

\textit{$\bullet$ Monotonicity formula and optimal spatial regularity.}
As in Subsection \ref{subsect:monot}, one would like to apply a monotonicity formula.
However, first of all one has to do a preliminary step:
using the equation
$$
u_t + \Df u=[\Df u+ R] \chi_{u=\psi} - R \in L^\infty([0,T];C_{{\rm loc}}^{\a}(\R^n))
$$
one deduces (thanks to a local variant of \eqref{eq:regularity holder 1}-\eqref{eq:regularity holder 2}) that
$\Df u \in L^\infty([0,T];C_{{\rm loc}}^{\a-0^+}(\R^n))$.
So, by elliptic regularity for the fractional Laplacian, $u \in C_{{\rm loc}}^{\a+2s-0^+}(\R^n)$, which gives
$$
R \in L^\infty([0,T],C_{{\rm loc}}^{\a+\gamma/2}(\R^n)).
$$
This allows considering $R$ as a lower order perturbation when applying the monotonicity formula.

Now, to apply the monotonicity formula around a free boundary point $x_0 \in \p\{u(t)=\psi\}$,
one should consider the function $w:\R^n\times \R^+\times \R$ obtained by solving
the Dirichlet problem
$$
\left\{
\begin{array}{l}
L_{-a}w=0,\\
w(x,0)=[\Df u(t,x,0)-R(t,x_0)] \chi_{\{u(t)=\psi\}}(x).
\end{array}
\right.
$$
Since $R(t) \in C_{{\rm loc}}^{\a+\gamma/2}(\R^n)$, we have $w \geq -C r^{\a+\g/2}$ on $B_r(x_0)$.
So, by the monotonicity formula one gets
$[\Df u+R(t,x)] \chi_{u(t)=\psi} \in C_{{\rm loc}}^{\beta_\a'}(\R^n)$, $\beta_\a':=\min\{\a+\d_\a',1-s\}$.
Then, one can iterate the above strategy, first using the parabolic regularity of $\p_t+\Df$ and the the elliptic regularity of $\Df$
to show that
$R \in L^\infty([0,T];C_{{\rm loc}}^{\b_\a'+\gamma}(\R^n)$, and then applying again the monotonicity formula.
In this way, after finitely many iterations we get
$$
[\Df u+ \K u - r u- b \cdot \nabla u] \chi_{u=\psi} \in L^\infty([0,T],C_{{\rm loc}}^{1-s}(\R^n)),\qquad R\in L^\infty([0,T],C_{{\rm loc}}^{1-s+\g}(\R^n)). 
$$

\textit{$\bullet$ Parabolic regularity and conclusion.}
Using Lemma \ref{lemma:bootstrap}, the argument in Subsection \ref{sect:C1s u}
applied to
$$
\p_t u +\Df u = [\Df u+ R] \chi_{u=\psi} - R
$$
allows to extend the regularity result in Theorem \ref{thm:main} (at least locally in space-time) to solutions of \eqref{eq:general frac heat visc}.

\appendix

\section{Regularity properties of the operator $\p_t+\Df$}

In this appendix we describe some important properties of the parabolic operator $\p_t+\Df$.\\

Let us first recall that fractional Laplacian works nicely in H\"older spaces: in
$f \in C^{\a}(\R^n)$ then $(-\D)^{-s}f \in C^{\a+2s}(\R^n)$, see
for instance \cite[Subsection 2.1]{silv}.

Analogously, the operator $\p_t + \Df$ works nicely in space-time H\"older spaces:
if $v_t+\Df v=f$ on $[0,T]\times \R^n$ and $v(0)$ is smooth (for our purposes, we can assume $v(0) \in C^2(\R^n)$, globally Lipschitz, and 
$\|D^2 v(0)\|_{L^\infty(\R^n)}+\|\Df v(0)\|_{C_x^{1-s}(\R^n)} <+\infty$),
by classical results on multipliers on H\"older spaces (see for instance \cite[Theorem 2.3]{madychriviere} and the proof of \cite[Theorem 3.1]{madychriviere}) we get
\begin{equation}
\label{eq:regularity holder multipliers}
\| v_t\|_{C_{t,x}^{\a,\b}((0,T]\times\R^n)}+\| \Df v\|_{C_{t,x}^{\a,\b}((0,T]\times\R^n)} \lesssim 1+ \|f\|_{C_{t,x}^{\a,\b}((0,T]\times\R^n)} \qquad \forall\,\a,\b \in (0,1)
\end{equation}
However, for our purposes, we also need to have some regularity estimates when $f$ is only bounded in time (but H\"older in space).

Let us observe that we can write the solution in terms of the fundamental solution $\G_s(t,x)$
of the fractional heat equation. More precisely, using Duhamel formula, we have
\begin{equation}
\label{eq:duhamel}
\begin{split}
-\Df v(t,x) &=v_t(t,x)-f(t,x)\\
&=-\G_s(t)\ast \Df v(0)+\int_0^t \int_{\R^n}\partial_t \G_s(t-\tau,x-y) [f(\tau,y)-f(\tau,x)]\,dy\,d\tau.
\end{split}
\end{equation}
We now claim that the following estimates hold (the proof of them is postponed to Subsection \ref{subsect:proof estimates} below):
\begin{equation}
\label{eq:infty regularity holder 1}
\left\|\int_0^t \int_{\R^n}\partial_t \G_s(t-\tau,x-y) [f(\tau,y)-f(\tau,x)]\,dy\,d\tau\right\|_{C_{t,x}^{\b/(2s),\b-0^+}((0,T]\times\R^n)} \lesssim \|f\|_{L^\infty((0,T];C_{x}^\b(\R^n))}
\end{equation}
if $\b<2s$,
\begin{equation}
\label{eq:infty regularity holder 2}
\left\|\int_0^t \int_{\R^n}\partial_t \G_s(t-\tau,x-y) [f(\tau,y)-f(\tau,x)]\,dy\,d\tau\right\|_{\logLip_tC_x^{\b-0^+}((0,T]\times\R^n)} \lesssim \|f\|_{L^\infty((0,T];C_{x}^\b(\R^n))}
\end{equation}
if $\b\geq 2s$.

Combining the above estimates with \eqref{eq:duhamel}, and using that $-\G_s(t)\ast \Df v(0)$ is smooth in space-time for $t>0$, we get
\begin{equation}
\label{eq:regularity holder 1}
\| \Df v\|_{C_{t,x}^{\b/(2s),\b-0^+}((0,T]\times\R^n)} \lesssim 1+ \|f\|_{L^\infty((0,T];C_{x}^\b(\R^n))} \qquad \forall\, \beta\in (0,2s)
\end{equation}
\begin{equation}
\label{eq:regularity holder 2}
\| \Df v\|_{\logLip_tC_x^{\b-0^+}((0,T]\times\R^n)}  \lesssim 1+ \|f\|_{L^\infty((0,T];C_{x}^\b(\R^n))}\qquad \forall\, \beta\in [2s,1)
\end{equation}
(At $t=0$ the time regularity may degenerate, due to the presence of
the term $\G_s(t)\ast \Df v(0)$.)
In particular, using that $v_t=f-\Df v$, we obtain
\begin{equation}
\label{eq:regularity ut holder}
\| v_t\|_{L^\infty((0,T];C_x^{\b-0^+}(\R^n))}+ \| \Df v\|_{L^\infty((0,T];C_x^{\b-0^+}(\R^n))}\lesssim 1+ \|f\|_{L^\infty((0,T];C_{x}^\b(\R^n))} \qquad \forall\, \beta\in (0,1)
\end{equation}

\subsection{Proof of \eqref{eq:infty regularity holder 1} and \eqref{eq:infty regularity holder 2}}
\label{subsect:proof estimates}
Let us recall that $t \in [0,T]$, with $T<+\infty$.

To prove \eqref{eq:infty regularity holder 1} and \eqref{eq:infty regularity holder 2}, we use
that the fundamental solution $\G_s(1,y)$ behaves like $\frac{1}{1+|y|^{n+2s}}$, which by scaling implies
\begin{equation}
\label{eq:fund sol 1}
\G_s(t,y) \sim \frac{t}{t^{\frac{n+2s}{2s}}+|y|^{n+2s}},\qquad |\p_t\G_s(t,y)| \lesssim \frac{1}{t^{\frac{n+2s}{2s}}+|y|^{n+2s}},
\end{equation}
\begin{equation}
\label{eq:fund sol 2}
|\p_{tt}\G_s(t,y)| 
\lesssim \frac{1}{t}\frac{1}{(t^{\frac{n+2s}{2s}}+|y|^{n+2s})}, \quad |\nabla_y\p_{t}\G_s(t,y)| 
\lesssim \frac{1}{|y|}\frac{1}{(t^{\frac{n+2s}{2s}}+|y|^{n+2s})}
\end{equation}
We will also make use of the following two basics estimates:\\
- There exists a constant $C>0$ such that, for all $h \in (0,1]$,
\begin{equation}
\label{estimate1}
\int_{\R^n}\frac{\min\{|z|^\b,1\}}{h^{\frac{n+2s}{2s}}+|z|^{n+2s}}\,dz
\leq C \Bigl(1+ h^{\b/(2s)-1}\Bigr).
\end{equation}
- There exists a constant $C>0$ such that, for all $h >0$,
\begin{equation}
\label{estimate2}
\int_0^t\frac{1}{(t-\tau)^{\frac{n+2s}{2s}}+h^{n+2s}}\,d\tau \leq C \min\left\{\frac{1}{h^n},\frac{1}{h^{n+2s}}\right\}.
\end{equation}
The proof of both is pretty simple.
For instance, to show \eqref{estimate1}, one splits the integral into three parts:
$$
\int_{B_{h^{1/(2s)}}}\frac{|z|^\b}{h^{\frac{n+2s}{2s}}+|z|^{n+2s}}\,dz \leq \frac{1}{h^{\frac{n+2s}{2s}}} \int_{B_{h^{1/(2s)}}}|z|^\b \,dz \lesssim 
h^{\b/(2s)-1},
$$
$$
\int_{B_1\setminus B_{h^{1/(2s)}}}\frac{|z|^\b}{h^{\frac{n+2s}{2s}}+|z|^{n+2s}}\,dz \leq \int_{B_1\setminus B_{h^{1/(2s)}}}|z|^{\b-n-2s} \,dz \lesssim 1+ h^{\b/(2s)-1},
$$
$$
\int_{\R^n\setminus B_1}\frac{1}{h^{\frac{n+2s}{2s}}+|z|^{n+2s}}\,dz \leq \int_{\R^n\setminus B_1}|z|^{-n-2s} \,dz \lesssim 1.
$$
To prove \eqref{estimate2}, we observe that the bound is trivial if $h \geq 1$, since
$\frac{1}{(t-\tau)^{\frac{n+2s}{2s}}+h^{n+2s}} \lesssim \frac{1}{h^{n+2s}}$ (recall that $|t-\tau| \leq T \lesssim 1$). On the other hand, if $h \in (0,1]$,
it suffices to split the integral over $[0,t-h^{2s}]$ and $[t-h^{2s},t]$, and argue as above.\\

\noindent$\bullet$ \textit{Proof of \eqref{eq:infty regularity holder 1}.}
Let us observe that, for $u<t$,
\begin{align*}
\bigl|\Df v(t,x)&-\Df v(\tau,x)\bigr|\\
&=\biggl|\int_0^t \int_{\R^n}\partial_t \G_s(t-\tau,x-y) [f(\tau,y)-f(\tau,x)]\,dy\,d\tau\\
&\qquad- \int_0^u \int_{\R^n}\partial_t \G_s(u-\tau,x-y) [f(\tau,y)-f(\tau,x)]\,dy\,d\tau\biggr|\\
&\lesssim \int_u^t \int_{\R^n}\bigl|\partial_t \G_s(t-\tau,x-y)\bigr||f(\tau,y)-f(\tau,x)|\,dy\,d\tau\\
&\qquad+\int_0^u \int_{\R^n}\Bigl|\partial_t \G_s(u-\tau,x-y)-\partial_t \G_s(t-\tau,x-y)\Bigr| \min\{|x-y|^\b,1\}\,dy\,d\tau\\
&=(T1)+(T2).
\end{align*}
Now, using \eqref{eq:fund sol 1}, by \eqref{estimate1} applied with $h=(t-\tau)$ we get that (T1) is bounded by
\begin{align*}
\int_u^t \int_{\R^n}\frac{\min\{|x-y|^\b,1\}}{(t-\tau)^{\frac{n+2s}{2s}}+|x-y|^{n+2s}}\,dy\,d\tau&
\leq \int_u^t \Bigl(1+ (t-\tau)^{\b/(2s)-1}\Bigr)\,d\tau\\
&\lesssim (t-u)+(t-u)^{\b/(2s)}.
\end{align*}
Concerning (T2), thanks to \eqref{eq:fund sol 1}, \eqref{eq:fund sol 2}, and \eqref{estimate1} with $h=(t-\tau)$, we can control it by
\begin{align*}
&\int_0^u \int_{\R^n} \min\left\{\frac{t-u}{u-\tau},1\right\}\frac{1}{((u-\tau)^{\frac{n+2s}{2s}}+|x-y|^{n+2s})}\min\{|x-y|^\b,1\}\,dy\,d\tau\\
&\lesssim \int_0^u \min\left\{\frac{t-u}{u-\tau},1\right\} \int_{\R^n} \frac{\min\{|x-y|^\b,1\}}{(u-\tau)^{\frac{n+2s}{2s}}+|x-y|^{n+2s}}\,dy\,d\tau\\
&\lesssim \int_0^{u-(t-u)} \frac{t-u}{u-\tau}\Bigl(1+ (u-\tau)^{\b/(2s)-1}\Bigr) \,d\tau+\int_{u-(t-u)}^u \Bigl(1+ (u-\tau)^{\b/(2s)-1}\Bigr) \,d\tau\\
&\lesssim (t-u)+(t-u)|\log(t-u)|+ (t-u)^{\b/(2s)},
\end{align*}
which proves the time  regularity of $\Df v$.

\noindent$\bullet$ \textit{Proof of \eqref{eq:infty regularity holder 2}.}
The proof of the spatial regularity is analogous: we write
\begin{align*}
&\Df v(t,x)-\Df v(t,z)\\
&=\int_0^t \int_{\R^n}\Bigl(\partial_t \G_s(t-\tau,x-y) [f(\tau,y)-f(\tau,x)]-\partial_t \G_s(t-\tau,z-y) [f(\tau,y)-f(\tau,z)] \Bigr)\,dy\,d\tau.
\end{align*}
Then, we split the spatial integral over two sets: the region where $\bigl\{|x-z| \leq |x-y|/2\bigr\}$
and the region where $\bigl\{|x-z| \geq |x-y|/2\bigr\}$.

On the first set, since  $\frac{1}{(t-\tau)^\frac{n+2s}{2s}+|x-y|^{n+2s}}$ and $\frac{1}{(t-\tau)^\frac{n+2s}{2s}+|z-y|^{n+2s}}$ are comparable, we can estimate the integrand by
$$
|f(\tau,y)-f(\tau,x)|\Bigl|\partial_t \G_s(t-\tau,x-y)-\partial_t \G_s(t-\tau,z-y)\Bigr|
+ |f(\tau,x)-f(\tau,z)|\Bigl|\partial_t \G_s(t-\tau,x-y)\Bigr|,
$$
which thanks to \eqref{eq:fund sol 1} and \eqref{eq:fund sol 2} can be bounded
by
$$
|y-x|^\beta\frac{|x-z|}{|x-y|}\frac{ 1 }{(t-\tau)^{\frac{n+2s}{2s}}+|x-y|^{n+2s}}
+|x-z|^\b \frac{1}{(t-\tau)^{\frac{n+2s}{2s}}+|x-y|^{n+2s}}.
$$
So, using \eqref{estimate2} with $h=|x-y|$ we get
\begin{align*}
&\int_0^t \int_{\{|x-z| \leq |x-y|/2\}}\Bigl|\partial_t \G_s(t-\tau,x-y) [f(\tau,y)-f(\tau,x)]-\partial_t \G_s(t-\tau,z-y) [f(\tau,y)-f(\tau,z)] \Bigr|\,dy\,d\tau\\
& \lesssim \int_{\{|x-z| \leq |x-y|/2\}} \left(\frac{|x-z|}{|x-y|^{1-\b}}+|x-z|^\b\right)\int_0^t\frac{1}{(t-\tau)^{\frac{n+2s}{2s}}+|x-y|^{n+2s}}\,d\tau \,dy\\
&\lesssim \int_{\{|x-z| \leq |x-y|/2\}} |x-z|^\b\min\left\{\frac{1}{|x-y|^n},\frac{1}{|x-y|^{n+2s}}\right\} \,dy\\
& \leq |x-z|^{\b}  \int_{\{|x-z| \leq |x-y|/2 \leq 1\}} |x-y|^{-n} + |x-z|^\b \int_{\{|x-y|/2 \geq 1\}}|x-y|^{-n-2s}\\
&\lesssim |x-z|^\b\bigl|\log|x-z|\bigr| \lesssim |x-z|^{\b-\e}\qquad\forall \,\e>0.
\end{align*}
Concerning the integral over the second set, we simply use \eqref{eq:fund sol 1} to bound the integrand by
$$
\frac{|x-y|^\beta}{(t-\tau)^\frac{n+2s}{2s}+|x-y|^{n+2s}}+ \frac{|x-z|^\beta}{(t-\tau)^\frac{n+2s}{2s}+|x-z|^{n+2s}}
$$
and observing that $\{|x-z| \geq |x-y|/2\} \subset B_{3|x-z|}(x)\cap B_{3|x-z|}(z)$ we get
\begin{align*}
&\int_0^t \int_{\{|x-z| \geq |x-y|/2\}}\Bigl|\partial_t \G_s(t-\tau,x-y) [f(\tau,y)-f(\tau,x)]-\partial_t \G_s(t-\tau,z-y) [f(\tau,y)-f(\tau,z)] \Bigr|\,dy\,d\tau\\
&\lesssim \int_0^t \int_{B_{3|x-z|}(x)}  \frac{|x-y|^\beta}{(t-\tau)^\frac{n+2s}{2s}+|x-y|^{n+2s}}\,dy\,d\tau\\
&=\int_{B_{3|x-z|}(x)}  |x-y|^\beta\int_0^t \frac{1}{(t-\tau)^\frac{n+2s}{2s}+|x-y|^{n+2s}}\,d\tau\,dy\\
&\lesssim \int_{B_{3|x-z|}(x)}  |x-y|^{\beta-n}\,dy \lesssim |x-z|^\b,
\end{align*}
where for the last but one inequality we used again \eqref{estimate2} with $h=|x-y|$.
This concludes the proof of \eqref{eq:infty regularity holder 2}.

\end{document}